\newtheorem{Thm}[equation]{Theorem}
\newtheorem{Cor}[equation]{Corollary}
\newtheorem{Lem}[equation]{Lemma}
\newtheorem{Pro}[equation]{Proposition}
\theoremstyle{definition}
\newtheorem{Def}[equation]{Definition}
\theoremstyle{remark}
\newtheorem{Rem}[equation]{Remark}
\newtheorem{notation}[equation]{Notation}
\numberwithin{equation}{section}
\renewcommand{\c@figure}{\c@equation}
\newcommand{\T}{{\mathcal T}}
\newcommand{\ra}{\rightarrow}
\newcommand{\bord}{\partial}
\newcommand{\eop}[1]{{\flushright\hfill\fbox{#1}}}
\begin{document} 

\title[A Harmonic Conjugate]{Combinatorial Harmonic Maps and Convergence to Conformal Maps, I: A Harmonic Conjugate.}

\author{Sa'ar Hersonsky}

\address{Department of Mathematics\\ 
University of Georgia\\ 
Athens, GA 30602}

\urladdr{http://www.math.uga.edu/~saarh}
\email{saarh@math.uga.edu}
%\thanks{}
\date{\today}
\keywords{planar networks, harmonic functions on graphs, flat surfaces with conical singularities, discrete uniformization theorems}
\subjclass[2000]{Primary: 53C43; Secondary: 57M50, 39A12, 30G25}
\maketitle

\begin{abstract}

In this paper, we provide new discrete uniformization theorems for bounded,  $m$-connected planar domains.  
To this end, we consider a planar, bounded, $m$-connected domain  $\Omega$, and let $\bord\Omega$ be its boundary. Let $\mathcal{T}$ denote a triangulation of $\Omega\cup\bord\Omega$.  We construct a \emph{new} decomposition of $\Omega\cup\bord\Omega$ into a finite union of quadrilaterals with disjoint interiors. The construction is based on utilizing a {\it pair} of harmonic functions on ${\mathcal T}^{(0)}$ and properties of their level curves. 
In the sequel \cite{Her3}, it will be proved that a particular discrete scheme based on these theorems converges to a conformal map,  thus  providing an affirmative answer to a question raised by Stephenson \cite[Section 11]{Steph}. 
%In the case of an annulus, the first function is the solution of a Dirichlet boundary value problem defined on ${\mathcal T}^{(0)}$. The second function is the solution of a Dirichlet-Neumann boundary value problem defined on a quadrilateral obtained by cutting open the annulus along a piecewise linear path in ${\mathcal T}^{(1)}$; it will be called the {\it harmonic  conjugate} function. 

%we advance beyond  the main results in \cite{Her1}.
 %(i.e.,  with respect to some chosen conductance constants  on ${\mathcal R}^{(1)}$). 

%As an application, we advance on the main results in \cite{Her1}.

\end{abstract}

\maketitle

\section{Introduction}
\label{se:Intro}  

\subsection{Perspective}
\label{se:perspective}
The {\it Uniformization Theorem} for surfaces says that any simply connected Riemann surface is {\it conformally equivalent} to one of three known Riemann surfaces: the open unit disk, the complex plane or the Riemann sphere.  This remarkable theorem is a vast generalization of the celebrated {\it Riemann Mapping Theorem} asserting that 
a non-empty simply connected open subset of the complex plane (which is not the whole of it)   is conformally equivalent to the open unit disk.
%In particular, such a surface necessarily admits a Riemannian metric of constant curvature, $-1, 0,1$, respectively.  
% \smallskip

\smallskip

Our work in this paper is motivated by the following fundamental questions:

\smallskip

{\it Given a topological surface endowed with some combinatorial data, such as a triangulation, can one use the combinatorics and the topology to obtain an effective version of uniformization theorems, or other types of uniformization theorems?}

\smallskip

 The nature of the input suggests that one should first prove {\it discrete uniformization theorems}, i.e.,  first  provide  a rough approximation to the desired uniformization map and target. Experience shows that this step is not easy to establish, since the input is coarse in nature (such as a triangulation of the domain), and the output should consist of a map from the domain to a surface endowed with some kind of a geometric structure. 
 
 \smallskip
 
Ideally, the approximating maps should have nice properties and if this step is successfully completed, one  then tries to prove convergence of these maps and the output objects attained, under suitable conditions, to concrete geometric objects. 

\smallskip

Let us describe two examples exploiting the usefulness of such an approach (see for instance \cite{Lu} and \cite{Gl} for other important results). A beautiful and classical result which was first proved by Koebe \cite{Koe}, {\it The Discrete Circle Packing Theorem}, states:

\smallskip

{\it Given a finite planar graph (without multiple edges or loops), there exists a packing of Euclidean disks in the plane, enumerated by the vertices of the graph, such that the contact graph of the packing looks exactly like the given graph, that is, the two graphs are isomorphic.}

\smallskip

This theorem was later rediscovered by Thurston  \cite[Chapter 13]{Th2} as a consequence of Andreev's Theorem \cite{An1,An2} concerning hyperbolic polyhedra in terms of circles on the Riemann sphere. Thurston  envisioned  \cite{Th1} a remarkable  application to the theory of conformal mapping of the complex plane and the Riemann sphere. Thurston conjectured that a discrete scheme based on the Discrete Circle Packing Theorem converges to the Riemann mapping. The conjecture which was proved in 1987 by Rodin and Sullivan \cite{RoSu} provides a refreshing geometric view on Riemann's Mapping Theorem. 
%(cf.  \cite{GuZeLuYa,GuLuYa,SaStBr}  for  recent  advances).

%More recently, Chow and Luo found profound applications of circle mappings to the study of Ricci flows on surfaces. 
%There are also related important applications of circle packings to {\it computer vision} due to David, Luo and Yau,  David, Zeng, Luo and Yau,  and Sass, Stephenson and Brock  
\smallskip

Thurston suggested to Schramm to study the case where the sets in the plane that form the tiles in the packing are squares. This resulted in {\it The Finite Riemann Mapping Theorem}  which was proved by Schramm  \cite{Sch} and independently by Cannon, Floyd and Parry \cite{CaFlPa}, in the period 1986--1991:

\smallskip

{\it Let $\T$ be a triangulation of a topological planar quadrilateral. Then there is a tiling of a rectangle by squares, indexed by the vertices of $\T$,  such that the contact graph of the packing looks exactly like the given graph, that is, the two graphs are isomorphic.}

\smallskip

 The problem of tiling a rectangle by squares, as provided by the theorem above, is in some sense a discrete analogue of finding a conformal map from a given quadrilateral to a rectangle (taking corners to corners and boundary to boundary). In this scheme, each vertex is {\it expanded} to a square, the width of the square is a rough estimate to the magnitude of the derivative of the uniformizing analytic map at that vertex. In \cite{CaFlPa} and in \cite{Sch}, it was proved that all the information which is required to get the square tiling above,  is given by a solution of an extremal problem which is a discrete analogue of the notion of {\it extremal length} from complex analysis.   
 
\smallskip 
 
 The actual theorem proved by Cannon, Floyd and Parry (\cite[Theorem 3.0.1]{CaFlPa}) is a bit different and slightly more general than the one stated above. Their solution is also based on discrete extremal length arguments. Another proof of an interesting generalization of \cite{Sch}  was given by Benjamini and Schramm \cite{BeSch1} (see also \cite{BeSch2} for a related study).

\smallskip

 The theme of realizing a given combinatorial object by a packing of concrete geometric objects has a fascinating history which pre-dated Koebe. 
In 1903,  Dehn  \cite{D} showed a relation between square tilings and electrical networks. 
Later on,  in the 1940's, Brooks, Smith, Stone and Tutte explored a foundational correspondence between a square tiling of a rectangle and a planar multigraph with two poles, a source and a sink  \cite{BSST}.  In 1996, Kenyon generalized Dehn's construction and established a correspondence between certain planar non-reversible Markov chains and trapezoid tilings of a rectangle  \cite{Ke}.
%Extending Schramm's work in \cite{Sc}, Benjamini and Schramm (\cite{BeSc}) studied the existence of harmonic functions and the solution of a Dirichlet problem on infinite locally finite graphs. %The main tool in their work is a representation of the graph in question by a square tiling of a cylinder.

\smallskip

In \cite{Her1} and \cite{Her2}, we addressed (using methods that transcend Dehn's idea) the case where the domain has higher connectivity. These papers provide the first step towards an approximation of conformal maps from such domains onto a certain class of flat surfaces with conical singularities.

\subsection{Motivation and the main ideas of this paper}
\label{se:net}
\smallskip

In his attempts to prove uniformization, Riemann suggested considering a planar annulus as made of a uniform conducting metal plate. When one applies voltage to the plate, keeping one boundary component at voltage $k$ and the other at voltage $0$, {\it electrical current}\   will flow through the annulus. The   {\it equipotential}  lines  form a family of disjoint simple closed curves foliating the annulus and separating the boundary curves. The \emph{current} flow lines consist of simple disjoint arcs connecting the boundary components,  and they foliate the annulus as well. Together, the two families provide  ``rectangular" coordinates on the annulus that turn it into a right circular cylinder, or a (conformally equivalent) circular concentric annulus. 

\smallskip

In this paper, we will follow Riemann's perspective on uniformization  by constructing ``rectangular" coordinates from given combinatorial data. The foundational modern theory of boundary value problems on graphs
 enables us to provide a unified framework to the discrete uniformization theorems mentioned above, as well as to more general situations. The important work of Bendito, Carmona and Encinas (see for instance \cite{BeCaEn1},\cite{BeCaEn2} and \cite{BeCaEn3}) is essential for our applications, and 
  parts of it were utilized quite frequently in \cite{Her}, \cite{Her1}, \cite{Her2}, this paper, and its sequel \cite{Her3}.  
    
\smallskip

Consider a planar, bounded, $m$-connected domain $\Omega$, and let $\bord\Omega$ be its boundary which comprises Jordan curves. Henceforth, let $\mathcal{T}$ denote a triangulation of $\Omega\cup\bord\Omega$. 
We will construct a \emph{new} decomposition of $\Omega\cup\bord\Omega$ into ${\mathcal R}$,  a finite union of piecewise-linear quadrilaterals with disjoint interiors. We will show that the set of quadrilaterals can be endowed with a finite measure thought of as a combinatorial analogue of the Euclidean planar area measure. 

\smallskip

Next, we construct a pair $(S_{\Omega},f)$ where $S_{\Omega}$ is a 
special type of a 
genus $0$, {\it singular flat surface}, having $m$ boundary components, which is tiled by rectangles and is endowed with $\mu$,  the canonical area measure induced by the singular flat structure. 
The map $f$ is a {\it homeomorphism} from $(\Omega, \partial\Omega)$ onto $S_{\Omega}$. Furthermore, each quadrilateral is mapped to a single rectangle, and its measure is preserved.   

\smallskip

The proof that $f$ is a homeomorphism, as well as the construction of a measure on the space of quadrilaterals, depends in a crucial way on the existence of  a {\it pair} of harmonic functions on ${\mathcal T}^{(0)}$, and a few properties of their level curves. 
%The first function is obtained  as the solution of a Dirichlet boundary value problem defined on ${\mathcal T}^{(0)}$. The second function is obtained by an integration scheme along the level curves of the first, and is called  the {\it harmonic conjugate} function. 
%It turns out that the pair of functions can be made  {\it orthogonal} in a combinatorial sense (i.e.,  with respect to some chosen conductance constants  on ${\mathcal R}^{(1)}$). 
\smallskip

The motivation for this paper is two fold. First, 
recall that in the theorems proved in \cite{Her1} (as well as in \cite{Her2}), the analogous mapping  to $f$ was proved to be an {\it energy-preserving} map (in a discrete sense) from  ${\mathcal T}^{(1)}$ onto a particular singular flat surface.  Hence, it is {\it not} possible to extend that map to a homeomorphism defined on the domain. Furthermore, the natural invariant measure considered there is one-dimensional (being concentrated on edges). So that measure is not the one which we expect to converge, as the triangulations get finer, to the planar Lebesgue measure. 

\smallskip
Second, it is shown in \cite[page 117]{Sch} that if one attempts to use the combinatorics of
the hexagonal lattice, square tilings (as provided by Schramm's method) cannot be used as discrete approximations for the Riemann mapping. There is still much effort by Cannon, Floyd and Parry to provide sufficient conditions under which their method will converge to a conformal map in the cases of an annulus or a quadrilateral.

\smallskip
Thus, the outcome of this paper is the construction of \emph{one} approximating map to a conformal map from $\Omega$.
In \cite{Her3}, which  heavily  relies on our work in this paper,  we will show that a scheme of refining the triangulation, coupled with a particular choice of a conductance function in each step (see Section~\ref{pa:fn} for the definition), leads to convergence of the mappings constructed in each step, to a canonical 
{\it conformal} mapping from the domain onto a particular flat surface with conical singularities. This will, in particular, answer a question raised by Stephenson in 1996 \cite[Section 11]{Steph}.

\subsection{The results in this paper}
\label{se:gmodels}
We now turn to a more detailed description of this paper. In order to ease the notation and to follow the logic of the various constructions,  let us focus  on the case of an annulus.
A {\it slit} in an annulus is a fixed, simple, combinatorial path in ${\mathcal T}^{(1)}$, along which $g$ is monotone increasing which joins the two boundary components (Definition~\ref{de:slit}). 
\smallskip

Let $g$ denote the solution of a discrete Dirichlet boundary value problem defined on ${\mathcal T}^{(0)}$ (see Definition ~\ref{de:dbvp}). We will start by extending $g$ to the interior of the  domain: affinely over edges in ${\mathcal T}^{(1)}$  and over triangles ${\mathcal T}^{(2)}$. We will often abuse notation and will not distinguish between a function defined on ${\mathcal T}^{(0)}$ and its extension over $|{\mathcal T}|$. 

\smallskip

For the applications of this paper and its sequel \cite{Her3}, first in creating ``rectangular" coordinates in a topological sense, and second in \cite{Her3} to prove convergence of the maps constructed in Theorem~\ref{th:annulus} and Theorem~\ref{th:DBVP} to conformal maps, it is necessary to introduce a new function denoted by $h$ on ${\mathcal T}^{(0)}$. This function will  be defined on an annulus minus a slit (i.e, a quadrilateral) and will be called the \emph{harmonic conjugate function}; $h$ is the solution of a particular  Dirichlet-Neumann boundary value problem.

\smallskip

In fact, another function $g^{\ast}$ must first be constructed.  This function will  have the same domain as  $h$ and will be called the \emph{conjugate function} of $g$. It is obtained  by integrating (in a discrete sense) the {\it normal derivative} of $g$ along its level curves (Definition~\ref{de:lapandnorm}). Whereas the normal derivative of $g$ is initially defined only at vertices that belong to  $\partial \Omega$, the simple topological structure of the level curves of $g$ permits the extension of the normal derivative to the interior, and thereafter its integration. These level curves are simple, piecewise-linear, closed curves  that separate the two boundary components and foliate the annulus. Definition~\ref{de:definetheta} will formalize this discussion. 

\smallskip

There is a technical difficulty in this construction (and others appearing in this paper) if a pair of adjacent vertices of ${\mathcal T}^{(0)}$ has the same $g$-values.  %This issue has already been discussed in \cite[Remark 4.14]{Her1}. 
One may generalize the definitions and the appropriate constructions, as one solution. 
For a discussion of this approach and others, see \cite[Section 5]{Ke}. Experimental evidence shows that in the case that  the triangulation is complicated enough such equality rarely happens. Henceforth in this paper, we will {\it assume} that 
no pair of adjacent vertices has the same $g$-values (unless they belong to the same boundary component).

%\end{Rem}

\smallskip

The analysis of the level curves of $g^{\ast}$ is the subject of Proposition~\ref{le:levelcurvesoftheta}. Their interaction with the level curves of $g$ is described in Proposition~\ref{th:topooflevel}.  
The level curves of $g$ form a piecewise-linear analogue of the level curves of the smooth harmonic function 
$u(r,\phi)=r$, and those of $h$ form a piecewise-linear analogue of the level curves of the smooth harmonic function 
$v(r,\phi)=\phi$. %Similarly, and this will not be used in this paper, for a quadrilateral the  level curves of $g$ form an analogue of the $y$-coordinates, and those of $g^{\ast}$ form an analogue of the $x$-coordinates. 
%Constructing these coordinates is the chief technical construction in this paper. 

For any function defined on ${\mathcal T}^{(0)}$, and any $t\in \mathbb{R}$, we let $l_t$ denote the level curve of its affine extension  corresponding to the value $t$.

\begin{Def}[Combinatorial orthogonal filling  pair of functions] 
\label{de:pair} Let $(\Omega,\partial \Omega, {\mathcal T})$ be given, where $\Omega$ is  an annulus minus a slit.
A pair of non-negative functions $\phi$ and $\psi$ defined on ${\mathcal T}^{(0)}$
will be called  \emph{combinatorially orthogonal filling}, if  
for any two level curves $l_\alpha$ and $l_\beta$ of $\phi$ and $\psi$, respectively, one has
\begin{equation}
\label{eq:orth}
|l_\alpha   \cap  l_\beta|=1,
\end{equation}
where $|\cdot |$ denotes the number of intersection points between $l_\alpha$ and $l_\beta$. Furthermore, it is required that each one of the families of level curves  is a non-singular foliation of  $(\Omega,\partial \Omega, {\mathcal T})$. 
\end{Def}

Note that  level curves are computed with respect to the affine extensions of $\phi$ and $\psi$, respectively.

\smallskip

By a \emph{simple} quadrilateral, we will mean a triangulated, closed  topological disk with a choice of four distinct vertices on its boundary.
It follows that a combinatorial orthogonal filling pair of functions induces  a cellular decomposition ${\mathcal R}$ of $\Omega\cup\partial\Omega$ such that  each $2$-cell is a simple quadrilateral, and each $1$-cell is included in a level curve of $\phi$ or of $\psi$.  
Such a decomposition will be called a \emph{rectangular combinatorial net}. 

\smallskip

We now record the essential properties of the pair $\{g,h\}$ in $\Omega$, an annulus minus a slit.

\begin{Thm} 
\label{th:pair is orth}
The pair $\{g, h\}$  is combinatorially  orthogonal filling. %Furthermore, there is a choice of conductance constants on ${\mathcal R}^{(1)}$ such that both functions are combinatorially harmonic. 
\end{Thm}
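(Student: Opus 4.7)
The plan is to verify the two requirements of Definition~\ref{de:pair} in turn: first that each of the families $\{l_\alpha^g\}$ and $\{l_\beta^h\}$ is a non-singular foliation of the (open) quadrilateral $\Omega$ obtained from the annulus by cutting along the slit, and second that any pair of level curves, one from each family, meets in a single point. The proof will combine three ingredients already developed in the paper: the topology of the level curves of $g$ (Proposition~\ref{th:topooflevel}), the analogous structural result for $g^{\ast}$ and hence $h$ (Proposition~\ref{le:levelcurvesoftheta}), and the construction of $h$ as a discrete integration of the normal derivative of $g$ along its level curves (Definition~\ref{de:definetheta}).

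First I would record, using Proposition~\ref{th:topooflevel}, that for every regular value $\alpha\in(0,k)$ the set $l_\alpha^g$ is a simple piecewise-linear arc crossing $\Omega$ from one side of the slit to the other, that distinct level curves are disjoint, and that they sweep out $\Omega$ as $\alpha$ varies; this is precisely the statement that $\{l_\alpha^g\}$ is a non-singular foliation. The hypothesis that no two adjacent vertices of ${\mathcal T}^{(0)}$ share a $g$-value guarantees that each $l_\alpha^g$ is an honest $1$-dimensional arc, not a $2$-cell, so this foliation picture is clean. An entirely parallel application of Proposition~\ref{le:levelcurvesoftheta} shows that the level curves of $h$ are simple piecewise-linear arcs, each joining the inner boundary component to the outer one (this being forced by the Dirichlet-Neumann data defining $h$), pairwise disjoint, and sweeping out $\Omega$; so $\{l_\beta^h\}$ is a non-singular foliation too.

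With both foliations in hand, I turn to the intersection count. Fix a regular pair $(\alpha,\beta)$. A curve $l_\alpha^g$ joins the two sides of the slit while $l_\beta^h$ joins the two boundary components; viewed inside the topological disk $\Omega$, these are two arcs whose endpoints lie on complementary pairs of sides of the boundary quadrilateral, so a standard Jordan arc argument forces $|l_\alpha^g\cap l_\beta^h|\ge 1$ and also forces the intersection number to be odd. To upgrade this lower bound to equality, I would use the defining property of $h$: by construction $h$ restricted to the vertices along $l_\alpha^g$ is the running sum of the (pointwise positive) discrete normal derivative of $g$, and hence the affine extension of $h$ is \emph{strictly monotone} along $l_\alpha^g$, ranging from $0$ on one side of the slit to $k$ on the other. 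Therefore the equation $h\equiv\beta$ has exactly one solution on $l_\alpha^g$, which gives $|l_\alpha^g\cap l_\beta^h|=1$.

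The main obstacle I anticipate is the last step, namely justifying the strict monotonicity of $h$ along $l_\alpha^g$ at the piecewise-linear level and ruling out degenerate intersections (tangencies along a shared edge, or crossings at a common vertex). The positivity of the normal derivative needs the fact that $g$ attains its strict maximum over a $g$-level curve nowhere in the interior, which is a discrete maximum-principle type statement that should have been established alongside Proposition~\ref{th:topooflevel}; combined with the no-equal-neighbors assumption it excludes the degenerate configurations. Once monotonicity is nailed down, the counting argument above is elementary, and the two conclusions together give that $\{g,h\}$ is a combinatorial orthogonal filling pair.
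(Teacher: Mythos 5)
There is a genuine gap in your proposal, and it stems from a confusion between two distinct functions: the \emph{conjugate function} $g^{\ast}$ of Definition~\ref{de:definetheta}, which is defined by discretely integrating the normal derivative of $g$ along its level curves, and the \emph{harmonic conjugate} $h$ of Definition~\ref{de:harconj}, which is the solution of a discrete Dirichlet--Neumann boundary value problem on ${\mathcal Q}_{\mbox\small{\rm slit}}$. Your key step --- ``by construction $h$ restricted to the vertices along $l_\alpha^g$ is the running sum of the (pointwise positive) discrete normal derivative of $g$, and hence the affine extension of $h$ is strictly monotone along $l_\alpha^g$'' --- is true for $g^{\ast}$, not for $h$. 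The function $g^{\ast}$ is constructed precisely so that this monotonicity holds, and that fact underlies Propositions~\ref{le:levelcurvesoftheta} and~\ref{th:topooflevel}; but $g^{\ast}$ is in general not harmonic, whereas $h$ is harmonic by definition, and the two agree only on $\partial{\mathcal Q}_{\mbox\small{\rm base}}$ and $\partial{\mathcal Q}_{\mbox\small{\rm top}}$, not in the interior. So the monotonicity of $h$ along $g$-level curves does not follow ``by construction'' --- indeed, it is essentially what the theorem is asking you to establish, so assuming it is circular. (A smaller issue: the range of $g^{\ast}$, and of $h$, along a level curve of $g$ is $[0,\mbox{\rm period}(g^{\ast})]$, not $[0,k]$; the value $k$ is the Dirichlet datum of $g$ on $E_1$.)

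The paper's proof avoids this trap by bringing in \emph{all four} functions $g,\,g^{\ast},\,h,\,h^{\ast}$. After securing the lower bound $|l_\alpha^g\cap l_\beta^h|\ge 1$ from the two foliations exactly as you do, it argues by contradiction: if a $g$-level curve met an $h$-level curve in two points $u,v$, then the disk $D(u,v)$ bounded by the two connecting sub-arcs would, by a Jordan-curve argument, be crossed twice by some nearby level curve of $h^{\ast}$ (if $D(u,v)$ lies to the side of $L(v_i)$) or of $g^{\ast}$ (if it lies above or below), contradicting Proposition~\ref{th:topooflevelofhasth} or Proposition~\ref{th:topooflevel} respectively. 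In other words, the established one-point-intersection properties of the pairs $\{g,g^{\ast}\}$ and $\{h,h^{\ast}\}$ are leveraged to control the pair $\{g,h\}$, which is exactly the step your argument needs but does not supply. Your proposal would prove that $\{g,g^{\ast}\}$ is combinatorially orthogonal filling, which is already Proposition~\ref{th:topooflevel}, but that is a different (and weaker) statement than Theorem~\ref{th:pair is orth}.
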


%\subsection{Geometric models and maps}
%\label{se:gmodels}

%Let $g$ be the solution of the Dirichlet boundary value problem defined on $(\Omega,\partial\Omega,{\mathcal T})$ (see Definition~\ref{de:dbvp}).  %describing some applications of the pair $\{g,h\}$ and the existence of their induced rectangular net ${\mathcal R}$.

We now turn to stating one of our main discrete uniformization theorems. 
In the course of the proofs of  our main theorems, we will first construct a new decomposition of $\Omega$ into a rectangular net, ${\mathcal R}$, the one induced by $\{g,h\}$; then a model surface which is, when $m>2$,   a singular flat surface tiled by rectangles. Finally, we will construct a map between the domain and the model surface and describe its properties.

\smallskip
Let us start with the fundamental case, an annulus.
Given two positive real numbers $r_1$ and $r_2$, and two angles $\phi_1,\phi_2 \in [0,2\pi)$, the bounded domain in the complex plane whose boundary is determined by the two circles, $u(r,\phi)=r_1$, and $u(r,\phi)=r_2$, and the two radial curves $v(r,\phi)=\phi_1$, and $v(r,\phi)=\phi_2$, will be called an {\it annular shell}. Let $\mu$ denote Lebesgue measure in the plane.  In the statement of the next theorem, the measure $\nu$ which is described  in Definition~\ref{de:combdirichletmeasure} is determined by $g$, $g^{\ast}$ and $h$. The quantity $\mbox{\rm period}(g^{\ast})$  is an invariant of $g^{\ast}$ which encapsulates integration of the normal derivative of $g$ along its level curves (see Definition~\ref{de:period}). 

\smallskip	

Our first discrete uniformization theorem is: 

\begin{Thm}[A discrete Dirichlet problem on an annulus]
\label{th:annulus} 

Let ${\mathcal A}$ be a planar annulus endowed with a triangulation ${\mathcal T}$, and let $\partial {\mathcal A}=E_1\sqcup E_2$.
Let $k$ be a positive constant and let $g$ be the solution of the discrete  Dirichlet boundary value problem defined on $(\mathcal A, \partial {\mathcal A}, {\mathcal T})$ (Definition~\ref{de:dbvp}). 

\smallskip

Let $S_{\mathcal A}$ be the concentric Euclidean annulus with its inner and outer radii satisfying
\begin{equation}
\label{eq:dim}
\{r_1,r_2\}=  
\{1, 2\pi \exp\big( \frac{2\pi}{\mbox{\rm period}(g^{\ast})}\,     k\big)\}.%\{1, 2\pi\,\mbox{\rm Length}(L(v_k))\}=
\end{equation}

Then there exist 

\begin{enumerate} 
%\item a finite measure $\nu$ defined on each quadrilateral in ${\mathcal R}$, 
\item a tiling $T$ of $S_{\mathcal A}$ by annular shells,
\item a homeomorphism 
$$f:({\mathcal A},\bord{\mathcal A},{\mathcal R})\rightarrow (S_{\mathcal A},\partial S_{\mathcal A},T),
$$
such that 
 $f$ is boundary preserving, it maps each quadrilateral in ${\mathcal R}^{(2)}$ onto a single annular shell in $S_{\mathcal A}$; furthermore,  $f$ preserves the measure of each quadrilateral, i.e., $$\nu(R)=\mu(f(R)),\ \mbox{\rm for all}\  R\in {\mathcal R}^{(2)}.$$ 
\end{enumerate}
\end{Thm}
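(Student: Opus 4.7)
The plan is to leverage the combinatorially orthogonal filling pair $\{g,h\}$ provided by Theorem~\ref{th:pair is orth} to define $f$ vertex-by-vertex via an ``exponential'' formula mimicking the classical conformal uniformization $z\mapsto \exp(g+ih)$ of a planar annulus by a concentric one. Concretely, let the quadrilateral ${\mathcal A}\setminus\gamma$ (where $\gamma$ is the chosen slit) be the domain where $h$ is single-valued. Set $\alpha=\beta=2\pi/\mbox{\rm period}(g^{\ast})$, and define on vertices
$$
f(v)=\exp\bigl(\alpha\, g(v)+i\,\beta\, h(v)\bigr).
$$
Since $g\equiv 0$ on $E_1$ and $g\equiv k$ on $E_2$, the vertices on $E_1$ land on the unit circle and those on $E_2$ on the circle of radius $\exp(\alpha k)$; taking into account how the scaling constants are set up in Definition~\ref{de:period} (and the $2\pi$ arising from matching total angular sweep to a full revolution), these radii are exactly the $\{r_1,r_2\}$ in~(\ref{eq:dim}). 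Along the two sides of the slit $\gamma$ the $h$-values differ by exactly $\mbox{\rm period}(g^{\ast})$, so $\beta\cdot\mbox{\rm period}(g^{\ast})=2\pi$ guarantees that the two sides of the slit are identified by $f$, and hence $f$ descends to a well-defined map on the annulus.

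Next, I would extend $f$ from vertices to the whole rectangular net. By Theorem~\ref{th:pair is orth}, each $2$-cell $Q$ of ${\mathcal R}$ is a simple quadrilateral whose four boundary arcs lie alternately on two level curves of $g$ (with values, say, $g_1<g_2$) and two level curves of $h$ (with values $h_1<h_2$). Map the interior of $Q$ onto the closed annular shell
$$
f(Q)=\bigl\{\rho e^{i\phi}:\ e^{\alpha g_1}\le\rho\le e^{\alpha g_2},\ \beta h_1\le \phi\le\beta h_2\bigr\},
$$
using the natural logarithmic parametrization (linear in $g$ for the log-radius and linear in $h$ for the angle). This automatically takes each of the four sides of $Q$ to the four sides of the annular shell, so the extensions match across common boundary arcs between adjacent $2$-cells. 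The collection $T=\{f(Q):Q\in{\mathcal R}^{(2)}\}$ is then a candidate tiling of $S_{\mathcal A}$ by annular shells.

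The heart of the proof, and the main obstacle, is to verify that the resulting $f$ is a \emph{homeomorphism}. Injectivity requires that no two annular shells overlap. Here one uses the structure of the level curves: by Proposition~\ref{le:levelcurvesoftheta} and Proposition~\ref{th:topooflevel}, the level curves of $g$ are disjoint simple closed loops foliating ${\mathcal A}$ and separating $E_1$ from $E_2$, and the level curves of $h$ are disjoint simple arcs joining $E_1$ to $E_2$, each intersecting each $g$-level curve exactly once (cf.~(\ref{eq:orth})). Combined with the strict monotonicity of $g$ across consecutive $g$-level curves of ${\mathcal R}$ and of $h$ across consecutive $h$-level curves, this ensures that the tiles $f(Q)$ have pairwise disjoint interiors, tile the target exactly once as one sweeps through the $g$- and $h$-ranges, and that the boundary-to-boundary map is orientation-preserving. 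Surjectivity and continuity of $f^{-1}$ follow by compactness.

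Finally, measure preservation is a formal check once the first three steps are in place. The measure $\nu(Q)$ from Definition~\ref{de:combdirichletmeasure} is designed precisely to equal the planar area
$$
\mu(f(Q))=\tfrac{1}{2}\bigl(e^{2\alpha g_2}-e^{2\alpha g_1}\bigr)\bigl(\beta(h_2-h_1)\bigr),
$$
of the corresponding annular shell, with the increments of $g$ along current flow lines and of $h$ along equipotential lines supplied respectively by the normal derivative of $g$ and by $g^{\ast}$. Verifying $\nu(R)=\mu(f(R))$ for every $R\in{\mathcal R}^{(2)}$ therefore reduces to comparing these two explicit formulas, completing the proof.
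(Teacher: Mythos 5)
Your overall strategy is essentially the one the paper takes: define $f$ on ${\mathcal R}^{(0)}$ via $\exp\bigl(\tfrac{2\pi}{\mbox{\rm period}(g^{\ast})}(g+ih)\bigr)$, observe that the period of $h$ across the slit makes this single-valued on ${\mathcal A}$, build the tiling from the circles $r_i=\exp(\alpha g(v_i))$ and rays $\phi_j=\beta h(v_j)$, extend $f$ cell-by-cell so that each quadrilateral of ${\mathcal R}^{(2)}$ goes to an annular shell with compatible choices across shared edges, and then check the measure identity. (Minor slip: the Dirichlet data in Definition~\ref{de:dbvp} has $g\equiv k$ on $E_1$ and $g\equiv 0$ on $E_2$, the reverse of what you wrote; the conclusion about the two radii is unaffected.)

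The genuine gap is surjectivity. You write that the tiles ``tile the target exactly once as one sweeps through the $g$- and $h$-ranges'' and that ``surjectivity and continuity of $f^{-1}$ follow by compactness,'' but this is asserted, not argued: compactness gives you that a continuous bijection from a compact space to a Hausdorff space is a homeomorphism, it does not by itself tell you that the union of the annular shells exhausts $S_{\mathcal A}$, i.e.\ that there are no gaps in the tiling. This is exactly the point the paper treats with care. Its argument is measure-theoretic: each quadrilateral satisfies $\nu(R)=\mu(T_R)$ (Equation~(\ref{eq:measure})), a direct computation gives $\nu({\mathcal A})=\pi\bigl(\exp^{2}(\tfrac{2\pi}{\mbox{\rm period}(g^{\ast})}k)-1\bigr)=\mu(S_{\mathcal A})$, and since distinct quadrilaterals are sent to distinct annular shells with pairwise disjoint interiors whose union lies \emph{inside} $S_{\mathcal A}$ (by the maximum principle), the equality of total areas forces the union to be all of $S_{\mathcal A}$ with no gaps and no overlaps. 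So in the paper the measure-preservation claim is not a ``formal check once the first three steps are in place''; it is the mechanism by which surjectivity (hence the homeomorphism property) is actually proved. If you prefer a purely topological route, you would need to replace ``follows by compactness'' with an actual argument (e.g.\ invariance of domain to see $f({\mathcal A}^{\circ})$ is open in $S_{\mathcal A}^{\circ}$, combined with closedness and connectedness of $S_{\mathcal A}^{\circ}$), but as written the step is missing.
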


The dimensions of each annular shell in the tiling are determined by the boundary value problem 
 (in a way that will be described later). In our setting, boundary preserving means that the annular shell associated to a quadrilateral in ${\mathcal R}$ with an edge on $\partial\Omega$ will have  an edge on a corresponding boundary component of $S_{\mathcal A}$. 

\medskip

Our second discrete uniformization theorem is Theorem~\ref{th:DBVP} which provides a geometric mapping and a model for the case $m>2$. The model surface that generalizes the concentric annulus in the previous theorem first appeared in  \cite{Her1}. It is a singular flat, genus $0$, compact surface with $m>2$ boundary components endowed with finitely many conical singularities. Each cone singularity is an integer multiple of $\pi/2$. Such a surface is called a {\it ladder of singular pairs of pants}.
 
\smallskip 
 
In order to prove this theorem, we first construct  a
  topological decomposition of $\Omega$ into simpler components; these are annuli and annuli with one singular boundary component,  for which the previous theorem and a slight generalization of it may be applied. The second step of the proof is geometric.  We show that it is possible to glue the different components which share a common boundary in a {\it length} preserving way. This step entails  a new notion of length which is the subject of  Definition~\ref{de:pairflux}.

%\smallskip

\subsection{Organization of the paper}
\label{se:plan}
From  \cite{Her1}, we use the description of the  topological properties of singular level curves of the Dirichlet boundary value problem. The most significant one is a description of the topological structure of the connected components of any singular level curve of the solution.  A study of the topology and geometry of the associated level curves and their complements is carried out in \cite[Section 2]{Her1}. From  \cite{Her2}, we use the description of the topological properties of  level curves of the Dirichlet-Neumann boundary value problem on a quadrilateral. 
A modest familiarity with \cite{Her1, Her2} will be useful for reading this paper. 

\smallskip

For the purpose of making this paper self-contained, 
 a few basic definitions and some notations are recalled in Section~\ref{se:pre}, and results from \cite{Her1,Her2} are quoted as needed. In Section~\ref{se:rectnet}, the first main tool of this paper, a conjugate function to $g$ is defined. In Section~\ref{se:proof}, the second main tool of this paper, a harmonic conjugate function and thereafter a rectangular net, are constructed on an annulus minus a slit. In Section~\ref{se:AnnandQaud}, the cases of an annulus and an annulus with one singular boundary component are treated, respectively, by 
 Theorem~\ref{th:annulus} and Proposition~\ref{th:annulussing}.
  Due to the reasons we mentioned in the paragraph preceding this subsection,  these are foundational for the applications of this paper and of \cite{Her3} as well. Section~\ref{se:highconn} is devoted to the proof of Theorem~\ref{th:DBVP}.

\subsection*{Convention} In this paper, we will assume that a fixed affine structure is imposed on $(\Omega,\partial\Omega,{\mathcal T})$. The existence of such a structure is obtained by using {\it normal coordinates} on  $(\Omega,\partial\Omega,{\mathcal T})$ (see \cite[Theorem 5-7]{Sp}). Since our methods depend on the combinatorics of the triangulation, the actual chosen affine structure is not important.

\subsection*{Acknowledgement} It is a pleasure to thank  Ted Shifrin and Robert Varley for enjoyable and inspiring discussions related to the subject of this paper. We are indebted to Bill Floyd and the referee, for their careful reading, comments, corrections, and questions  leading to improvements on an earlier version of this paper.

\section{Finite networks and boundary value problems}                                      
\label{se:pre}
In this section, we briefly review classical notions from harmonic analysis on graphs through the framework of {\it finite networks}. We then describe a procedure to modify a given boundary problem and ${\mathcal T}$. The reader who is familiar with \cite{Her1} or \cite{Her2} may skip to the next section.  

\subsection{Finite networks} 
\label{pa:fn}
In this paragraph, we will mostly be using the notation of Section 2 in \cite{BeCaEn}. Let $\Gamma=(V,E,c)$  be a planar {\it finite network}; that is,  
  a planar, simple, and
finite connected graph with vertex set $V$ and edge set $E$, where each edge $(x,y)\in E$ is
assigned a {\it conductance} $c(x,y)=c(y,x)>0$. Let ${\mathcal P}({V})$
denote the set of non-negative functions on $V$. Given $F\subset V$, we denote by $F^{c}$ its complement in
$V$.  Set
${\mathcal P}(F)=\{u\in {\mathcal P}(V):S(u)\subset F\}$, where $S(u)=\{ x \in V: u(x)\neq 0 \}$.  The set  $\delta F=\{x\in F^{c}: (x,y)\in E\ {\mbox
{\rm for some}}\ y\in F \}$ is called the {\it vertex boundary} of
$F$. Let ${\bar F}=F\cup \delta F$, and let $\bar E=\{(x,y)\in
E :x\in F\}$.
Let
${\bar \Gamma}(F)=({\bar F},{\bar E},{\bar c})$ be the network
such that ${\bar c}$ is the restriction of $c$ to ${\bar E}$. 
We write $x\sim y$ if $(x,y)\in \bar E$. 

\smallskip

The following operators are discrete analogues of
classical notions in continuous potential theory (see for instance \cite{Fu} and  \cite{ChGrYa}).

\begin{Def}
 \label{de:lapandnorm}  
 Let $u\in {\mathcal P}({\bar  F})$. 
 Then 
 for $x\in F$, the function 
 \begin{equation}
 \label{eq:lap}
 \Delta u(x)=\sum_{y\sim x}c(x,y)\left( u(x)-u(y) \right )
 \end{equation} is called
  the \emph{Laplacian} of $u$ at $x$.  For $x\in \delta(F)$, let $\{y_1,y_2,\ldots,y_m\}\in F$ be its neighbors enumerated clockwise.
 The \emph{normal derivative}  of $u$ at a point
$x\in \delta F$ with respect to a set $F$ is 
\begin{equation}
\label{eq:nor}
\frac{\bord u}{\bord n}(F)(x)= \sum_{y\sim x,\
y\in F}c(x,y)  (u(x)-u(y)).
\end{equation} 
Finally,  $u\in {\mathcal P}({\bar F})$ is called \emph{harmonic} in $F\subset V$ if
$\Delta u(x)=0,$ for all $x\in F$.
\end{Def}

\subsection{Harmonic analysis and boundary value problems on graphs}
\label{pa:bvp}

Consider a planar, bounded, $m$-connected region $\Omega$, and let $\bord\Omega$ be its boundary ($m>1$).  Let $\mathcal{T}$ be a triangulation of $\Omega\cup\bord\Omega$. Let $\bord\Omega=E_1\cup E_2$, where $E_1$ and $E_2$ are disjoint, and $E_1$ is the outermost component of $\bord\Omega$.  Invoke a conductance function ${\mathcal C}$ on ${\mathcal T^{(1)}}$, thus making it a finite network, and use it to define the  Laplacian on ${\mathcal T}^{(0)}$.

\smallskip
\begin{notation}
Henceforth, for any  $F\subset V$  and $g:F\rightarrow \mathbb R$, we let $\int _{v\in F} g(v)$ denote  
$\sum _{v\in F} g(v)$. Similarly,  for any  $X\subset {\bar E}$ and $h:X\rightarrow \mathbb R$, we let $\int _{e\in X} h(e)$ denote  $\sum _{e\in X} h(e)$.
\end{notation}

We need to fix some additional data before describing the discrete boundary value problems  that will be employed in this paper.  To this end,
 let $\{\alpha_1,\ldots, \alpha_l\}$ be a collection of closed disjoint arcs contained in $E_1$, and let 
 $\{\beta_1,\ldots, \beta_s\}$ be a collection of closed disjoint arcs contained in 
$E_2$; let $k$ be a positive constant.

\begin{Def} 
\label{de:dnbvp}

The {\it Discrete  Dirichlet-Neumann Boundary Value Problem}  is determined  by requiring that  
\begin{enumerate}
\item $g({{\mathcal T}^{(0)}\cap{\alpha_i}})=k,\ \mbox{\rm for all}\  i=1,\ldots, l$, and $g({ {\mathcal T}^{(0)}\cap {\beta_j}})=0,\ \mbox{\rm for all}\ j=1\ldots s$,  
 
\smallskip

\item $\dfrac{\bord g}{\bord n}({{\mathcal T}^{(0)}\cap      (E_1\setminus (\alpha_1\cup\ldots\cup \alpha_l))}) =  \dfrac{\bord g}{\bord n}({{\mathcal T}^{(0)}\cap   (E_2\setminus (\beta_1\cup\ldots\cup \beta_s))})=0,\ \mbox{\rm for all}\  $ i=1,\ldots,l$ 
\ \mbox{\rm and}\  j=1,\ldots, s$,

\smallskip

\item $\Delta g=0$ at every interior vertex of ${\mathcal T}^{(0)}$, i.e. $g$ is {\it combinatorially  harmonic}, and

\smallskip

\item $\int_{x\in {\mathcal T}^{(0)}\cap \partial\Omega}\frac{\bord g}{\bord n}(\partial \Omega)(x)=0,$ where ($4$) is a necessary consistent condition.
\end{enumerate}

\end{Def}

\begin{Def} 
\label{de:dbvp}

 The {\it Discrete  Dirichlet Boundary Value Problem}  is determined  by requiring that
\begin{enumerate}
\item $g({{\mathcal T}^{(0)}\cap E_1})=k$, $g({{\mathcal T}^{(0)}\cap E_2})=0$,
\medskip
%\item $E_1= \alpha_1\cup\ldots\cup \alpha_l\ \mbox{\rm and}\  E_2=\beta_1\cup\ldots\cup \beta_m$, 
 and
\item $\Delta g=0$ at every interior vertex of ${\mathcal T}^{(0)}$.
\end{enumerate}
These data will be called  a {\it Dirichlet data} for $\Omega$.

\end{Def}

In the figure below, $E_1$ is depicted by the red curve, $E_2$  is depicted by the three blue curves, and $V$ consists of all the vertices that do not belong to $E_1\cup E_2$.

\begin{figure}[htbp]
\begin{center}
 \scalebox{.45}{ \input{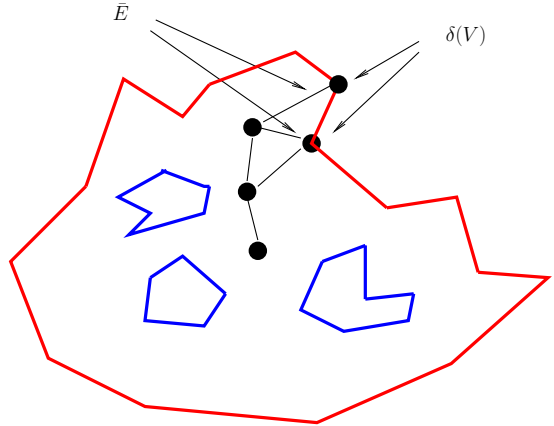}}
 \caption{An example where $V$ consists of all the vertices in the topological interior.}
\label{figure:quad1}
\end{center}
\end{figure}

 \smallskip

A fundamental property which we often will use is the \emph{discrete maximum-minimum principle}, asserting that if $u$ is harmonic on $V'\subset V$, where $V$ is a connected subset of vertices having a connected interior, then $u$ attains its maximum and minimum on the boundary of $V'$ (see \cite[Theorem I.35]{So}).

\smallskip 

The following
proposition (cf. \cite[Prop. 3.1]{BeCaEn}) establishes a discrete version of the first classical {\it Green
identity}. It played an important role in the proofs of the main theorems in  \cite{Her, Her1}, and it  also plays an important role in this paper and in its sequel \cite{Her3}. 

\begin{Pro}[The first Green identity]
\label{pr:Green id} Let $F \subset V$ and $u,v\in {\mathcal P}({\bar
F})$. Then we have that
\begin{equation}
\label{eq:Green}
 \int_{(x,y)\in {\bar
E}}c(x,y)(u(x)-u(y))(v(x)-v(y))=\int_{x\in F}\Delta
u(x)v(x)+\int_{x\in\delta(F)}\frac{\bord u}{\bord n}(F)(x)v(x).
\end{equation}
\end{Pro}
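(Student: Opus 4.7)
The plan is to verify the identity by expanding the right-hand side using the definitions of $\Delta u$ and $\bord u/\bord n$, then exchanging the order of summation so that the outer sum runs over edges rather than vertices, and finally matching terms edge-by-edge with the left-hand side.

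First I would write
\begin{align*}
\int_{x\in F}\Delta u(x)\,v(x) &= \sum_{x\in F} v(x) \sum_{y\sim x} c(x,y)(u(x)-u(y)), \\
\int_{x\in\delta F}\frac{\bord u}{\bord n}(F)(x)\,v(x) &= \sum_{x\in\delta F} v(x) \sum_{y\sim x,\,y\in F} c(x,y)(u(x)-u(y)),
\end{align*}
and then reorganize each double sum as a sum over the edge set $\bar E$. For each unordered edge $e=\{x,y\}\in\bar E$, the aim is to show that the total contribution of $e$ to the right-hand side is exactly $c(x,y)(u(x)-u(y))(v(x)-v(y))$, matching its contribution to the left-hand side.

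Two cases arise. If both endpoints lie in $F$, then $e$ is counted once in the Laplacian sum at $x$ and once at $y$, producing
\[
c(x,y)(u(x)-u(y))v(x)+c(y,x)(u(y)-u(x))v(y)=c(x,y)(u(x)-u(y))(v(x)-v(y)),
\]
using the symmetry $c(x,y)=c(y,x)$. If $x\in F$ and $y\in\delta F$, then $e$ contributes $c(x,y)(u(x)-u(y))v(x)$ through $\Delta u(x)$ and $c(y,x)(u(y)-u(x))v(y)$ through $\bord u/\bord n$ at $y$, summing again to $c(x,y)(u(x)-u(y))(v(x)-v(y))$. Since $\bar E$ is precisely the set of edges having at least one endpoint in $F$, these two cases exhaust $\bar E$, and the sum of the edgewise contributions reproduces the left-hand side.

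The main obstacle I anticipate is purely bookkeeping: respecting the convention that each edge of $\bar E$ is listed once, checking that the normal derivative at $x\in\delta F$ genuinely ignores neighbors outside $F$, and verifying that edges with both endpoints in $\delta F$ contribute nothing to either side (they are absent from $\bar E$, never seen by the Laplacian sum on $F$, and excluded from the normal derivative since it retains only neighbors in $F$). Once this accounting is carried out, the edgewise matching is complete and the identity follows.
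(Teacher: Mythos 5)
The paper quotes this identity from Bendito--Carmona--Encinas (cf.\ \cite[Prop.\ 3.1]{BeCaEn}) without reproducing a proof, so there is no in-paper argument to compare against; your proof is the standard discrete summation-by-parts argument and it is correct. The edge-by-edge accounting is exactly right: for an edge with both endpoints in $F$ the two Laplacian contributions combine via $c(x,y)=c(y,x)$ to give $c(x,y)(u(x)-u(y))(v(x)-v(y))$, for an edge between $F$ and $\delta F$ the Laplacian term at the $F$-endpoint and the normal-derivative term at the $\delta F$-endpoint combine the same way, and edges with both endpoints in $\delta F$ are excluded from $\bar E$, from the Laplacian sum (taken only over $x\in F$), and from the normal derivative (which retains only neighbors in $F$), so they contribute to neither side.
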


\medskip

\subsection{Piecewise-linear modifications of a boundary value problem} 
\label{pa:simple}
We will often need to modify a given cellular decomposition, and thereafter to modify the initial boundary value problem. The need to do this is twofold. 
First assume, for example, that $L$ is a fixed, simple, closed  level curve  of the initial boundary value problem. 
Since $L\cap {\mathcal T}^{(1)}$ is not (generically) a subset of ${\mathcal T}^{(0)}$,  Definition~\ref{de:pairflux} may not be employed directly  to provide a notion of length to $L$. Therefore, we will add vertices and edges according to  the following procedure. Such new vertices will be called type I vertices.

\smallskip

Let ${\mathcal O}_1, {\mathcal O}_2$ be the two distinct connected components of the complement of $L$ in $\Omega$, with $L$ being the boundary of both (these properties follow by employing the Jordan curve theorem). We will call  ${\mathcal O}_1$ an {\it interior domain} if all the vertices which belong to it have $g$-values that are smaller than the $g$-value of $L$. The other domain will be called the {\it exterior domain}. Note that  by the maximum principle, one of ${\mathcal O}_1, {\mathcal O}_2$ must have all of its vertices with $g$-values smaller than the $g$-value of $L$. 

\smallskip

Let $e\in {\mathcal T}^{(1)}$, and assume that $x=e\cap L$ is a vertex of type I. Thus,  two new edges $(x,v)$ and $(u,x)$ are created. We may assume that $v\in {\mathcal O}_1$ and $u\in {\mathcal O}_2$. Next, define conductance constants $\tilde c(v,x)=\tilde c(x,v)$ and $\tilde c(x,u)=\tilde c(u,x)$ by 

\begin{equation}
\label{eq:prehar}
\tilde c(v,x)= \frac {c(v,u) (g(v)-g(u))}{g(v)-g(x)} \ \   \mbox{\rm and}\ \  \tilde c(u,x) = \frac {c(v,u) (g(u)-g(v))}{g(u)-g(x)}.
\end{equation}

\smallskip

By adding to ${\mathcal T}$ all such new vertices and edges, as well as the piecewise arcs of $L$ determined by the new vertices, we obtain two cellular decompositions, ${\mathcal T}_{{\mathcal O}_1}$ of  
${\mathcal O}_1$ and  ${\mathcal T}_{{\mathcal O}_2}$ of ${\mathcal O}_2$. Note that in general, new two cells that are quadrilaterals are introduced. 

Two conductance functions, ${\mathcal C}_{{\mathcal O}_1}$ and ${\mathcal C}_{{\mathcal O}_2}$, are now defined on the one-skeleton of these cellular decompositions, by modifying according to Equation~(\ref{eq:prehar}) the conductance constants that were used in the Dirichlet data for
$g$ (i.e., changes are occurring only on new edges, and on $L$ the conductance is defined to be identically zero). One then defines (see  \cite[Definition 2.7]{Her1}) a natural modification of the given boundary value problem, the solution of which is easy to control by using the existence and uniqueness theorems in \cite{BeCaEn}. In particular, it is equal to the restriction of 
$g$ to ${\mathcal O}_i$, for $i=1,2$.  

\smallskip

Another technical point which motivates the modification described above will manifest itself in Subsection~\ref{se:fgmetric}. Proposition~\ref{pr:Green id} will be frequently used in this paper, and it may not be directly applied to a modified cellular decomposition, and the modified boundary value problem defined on it. Formally,  in order to apply  Proposition~\ref{pr:Green id} to a meaningful boundary value problem, the modified graph of the network needs to have its vertex boundary components separated enough in terms of the combinatorial distance. Whenever necessary, we will add new vertices along edges and change the conductance constants along new edges in such a way that the solution of the modified boundary value problem will still be harmonic at each new vertex, and will preserve the values of the solution of the initial boundary value problem at the two vertices along the original edge.  Such vertices will be called type II vertices. 

\smallskip

Formally, once such changes occur, a new Dirichlet boundary value problem is defined.
The existence and uniqueness of the solution of a Dirichlet boundary value problem (see \cite{BeCaEn}) allows us to abuse notation and keep denoting the new 
solution by $g$. 
We will also keep denoting by ${\mathcal T}$  any new cellular decomposition obtained as described above.

\section{constructing a conjugate  function on an annulus with a slit}
\label{se:rectnet}
This section has two subsections.  The first subsection contains the construction of the conjugate function $g^{\ast}$  to the solution of the initial Dirichlet boundary value problem defined on an annulus. The second subsection is devoted to the study of the level curve of the conjugate function. In particular, to the interaction between these and the level sets of $g$.

\subsection{Constructing the conjugate function $g^{\ast}$}
\label{se:ortho}

In this subsection, we will construct a function, $g^{\ast}$,  which is {\it conjugate} in a combinatorial sense %to be described in Section~\ref{se:conjugate} 
to $g$ (the solution of a Dirichlet boundary value problem defined on an annulus).  The conjugate function will be single valued on the annulus minus a chosen {\it slit}.

\smallskip

Keeping the notation of the previous section and the introduction,
let 
$({\mathcal A}, 
\partial\Omega=E_1\cup E_2,{\mathcal T})$ 
be an annulus endowed with a cellular decomposition in which each $2$-cell is either a triangle or a quadrilateral. Let $k$ be a positive constant, and let $g$ be the solution of a Dirichlet boundary value problem as described in Definition~\ref{de:dbvp}.  Note that all the level curves of $g$ are piecewise, simple, closed curves separating $E_1$ and $E_2$ (see Lemma 2.8 in \cite{Her1} for the analysis in this case and the  case of higher connectivity) which foliate ${\mathcal A}$. 

\smallskip

Before providing the  definition of the conjugate function, we need to make a choice of a  piecewise linear path in  ${\mathcal A}$.

\begin{Def}
\label{de:slit}
Let $\mbox{\rm slit} ({\mathcal A})$ denote a fixed, simple, combinatorial path in ${\mathcal T}^{(1)}$ which joins $E_1$ to $E_2$. Furthermore, we require that the restriction of the solution of the discrete Dirichlet boundary value problem to it  is monotone decreasing.
\end{Def}

\begin{Rem}
The existence of such a path  is guaranteed by the discrete maximum principle.
\end{Rem}

Let 
\begin{equation}
\label{eq:levelsets}
{\mathcal L}=\{L(v_0),\ldots,L(v_k)\}
\end{equation} 
be the collection of  level curves of $g$ that contain all the vertices in ${\mathcal T}^{(0)}$ arranged according to increasing values of $g$. It follows from  Definition~\ref{de:dbvp} that 
$L(v_0)=E_2$ and $L(v_k)=E_1$.  We also add vertices of Type II so that any two level sets in ${\mathcal L}$ are at (combinatorial) distance equal to two. This can be done in various ways, henceforth, we will assume that one of these is chosen.

\smallskip

We wish to construct a single valued function on ${\mathcal A}$. In order to do so, we will start with a preliminary case.  To this end, let ${\mathcal Q}_{\mbox\small{\rm slit}}$ denote the quadrilateral obtained by cutting open ${\mathcal A}$ 
along $\mbox{\rm slit} ({\mathcal A})$ and having two copies of $\mbox{\rm slit} ({\mathcal A})$ attached, keeping the conductance constants along the split edges. 
Since an ${\mathcal A}$ orientation is well defined, we will denote one of the two copies by  $\partial{\mathcal Q}_{\mbox\small{\rm base}}$, and the other by $\partial{\mathcal Q}_{\mbox\small{\rm top}}$. In other words, from the point of view of ${\mathcal A}$, points on $\mbox{\rm slit} ({\mathcal A})$ may be endowed with two labels, recording whether they are the starting point of a level curve (with winding number equal to one) or its endpoint. We keep the values of $g$ at the vertices  unchanged.  Thus, corresponding vertices in $\partial{\mathcal Q}_{\mbox\small{\rm base}}$ and $\partial{\mathcal Q}_{\mbox\small{\rm top}}$ have identical  $g$-values. By abuse of notation, we will keep denoting by ${\mathcal T}^{(0)}$  the $0$-skeleton of ${\mathcal Q}_{\mbox\small{\rm slit}}$.

%\smallskip

\begin{figure}[htbp]
\begin{center}
 \scalebox{.50}{ \input{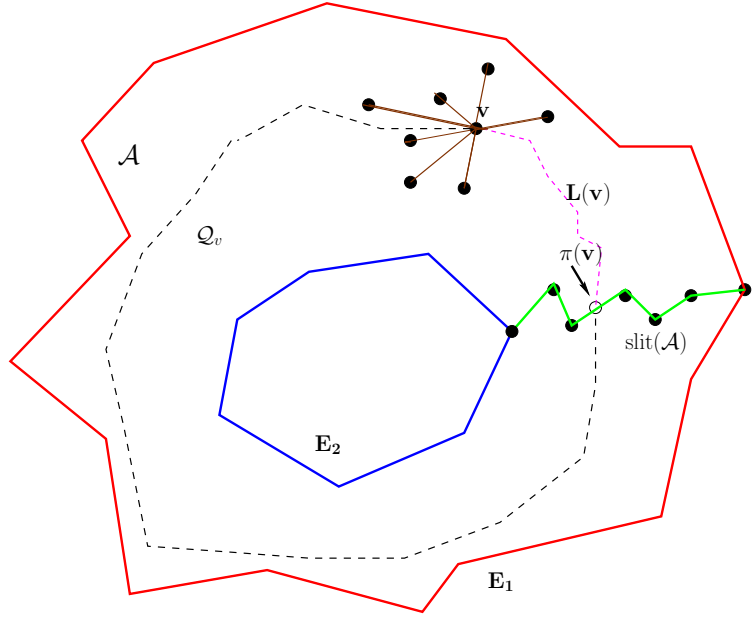}}
 \caption{An example of a quadrilateral $Q_v$.}
\label{figure:quad}
\end{center}
\end{figure}

\smallskip

For $v\in {\mathcal A}\setminus E_2$, which is in ${\mathcal T}^{(0)}$ or a vertex of type I, let $L(v)$ denote the unique level curve of $g$ which contains $v$. Let 
${\mathcal Q}_{v}$ be the interior of the piecewise-linear quadrilateral whose boundary is defined by 
$\partial{\mathcal Q}_{\mbox\small{\rm base}},\partial{\mathcal Q}_{\mbox\small{\rm top}} , L(v)$  and $E_2$.  
For $v\in E_2$, which is in ${\mathcal T}^{(0)}$ or a vertex of type I, 
$\bar{{\mathcal Q}}_{v}$ is defined to be (the interior of) ${\mathcal Q}_{\mbox\small{\rm slit}}$.

%recall that  $E_2=L(v_0)$ is the unique level curve of $g$ which contains $v$. 

% the piecewise-linear quadrilateral whose boundary is defined by 
%$\mbox{\rm slit} ({\mathcal A}), E_2, L(v_{1})$ and the 

%rightmost edge emanating from $v$ with its endpoint on $L(v_{1})$. 

%\begin{Rem}
%Note that while ${\mathcal Q}_{v}$ was defined in ${\mathcal A}$, it is also a well defined object in ${\mathcal Q}_{\mbox\small{\rm slit}}$.  
%\end{Rem}

\begin{Rem}
Recall that  a vertex of type I is introduced whenever the intersection between an edge 
and the level curve does not belong to ${\mathcal T}^{(0)}$.
\end{Rem}
\smallskip

We now make

\begin{Def}[The Conjugate function of $g$]
\label{de:definetheta}
Let $v$ be a vertex in ${\mathcal T}^{(0)}$ or a vertex of type I. Let
\begin{equation}
\label{eq:starting}
\pi(v)=L(v)\cap \partial{\mathcal Q}_{\mbox\small{\rm base}}. 
\end{equation}

\medskip
We define $g^{\ast}(v)$, the conjugate function of $g$, as follows.

\begin{description}
\item[First case] Suppose that $v\not\in E_2 \cup \partial{\mathcal Q}_{\mbox\small{\rm top}}$. Then 
\begin{equation}
\label{eq:orthfun}
g^{\ast}(v) = \int_{\pi(v)} ^{v}\dfrac{\partial g}{\partial n}({\mathcal Q}_{v})(u),
\end{equation}
where the integration is carried along (the vertices of) $L(v)$ in the counter-clockwise direction.
\medskip

\item[Second case] Suppose that $v\in E_2 \setminus \partial{\mathcal Q}_{\mbox\small{\rm top}}$. Then we define $g^{\ast}(v)$ by
\begin{equation}
\label{eq:orthfun1}
g^{\ast}(v) = \int _{\pi(v)} ^{v}-\dfrac{\partial g}{\partial n}({ \bar {\mathcal Q}}_{v})(u)    = \int_{\pi(v)} ^{v}\big|\dfrac{\partial g}{\partial n}({ \bar {\mathcal Q}}_{v})(u)\big|,
\end{equation}
\end{description}
where the integration is carried along (the vertices of) $E_2$ in the counter-clockwise direction.

\smallskip
On edges in $\partial{\mathcal Q}_{\mbox\small{\rm top}}$, we record the conductance constants induced by ${\mathcal A}$. In order to define $g^{*}$ on $\partial{\mathcal Q}_{\mbox\small{\rm top}}$, we consider the vertices on 
$\partial{\mathcal Q}_{\mbox\small{\rm top}}$ as vertices in ${\mathcal A}$.
For the single vertex $\partial{\mathcal Q}_{\mbox\small{\rm top}}\cap E_2$, the integration above is modified to include the contribution of  its normal derivative from its rightmost neighbor in $\partial{\mathcal Q}_{\mbox\small{\rm top}}$. 
For  any other vertex in $\partial{\mathcal Q}_{\mbox\small{\rm top}}$,  the integration above is modified to include the contribution of its normal derivative from its leftmost neighbor in $\partial{\mathcal Q}_{\mbox\small{\rm top}}$.
 Finally, for a point $z\in {\mathcal Q}_{\mbox\small{\rm slit}}$ which is not a vertex,  $g^{\ast}(z)$ is defined by 
extending $g^{\ast}$ affinely 
 over edges and triangles, and bi-linearly over quadrilaterals.
\end{Def}

\smallskip

\begin{Rem}
The absolute value of the normal derivative of $g$ at a vertex which appears 
in Equation~(\ref{eq:orthfun1}), is due to the maximum principle. The continuity of $g^{\ast}$ from the right on $E_2$ follows from similar arguments to those appearing in the proof of Proposition~\ref{pr:topisalevelcurve} below.
\end{Rem}

\begin{Rem}
Henceforth, we  will denote by $Q(E_1)$, and by $Q(E_2)$, the two boundary components of ${\mathcal Q}_{\mbox\small{\rm slit}}$, which correspond to their counterparts $E_1$, and $E_2$, respectively,  in ${\mathcal A}$. 
\end{Rem}

We now turn to studying topological properties of the level curves of $g^{\ast}$. 

\bigskip

By definition, $\partial{\mathcal Q}_{\mbox\small{\rm base}}$ is the level curve of $g^{\ast}$ which corresponds to $g^{\ast}=0$. We will prove that  $\partial{\mathcal Q}_{\mbox\small{\rm top}}$ is also a level curve of $g^{\ast}$. In other words, computing the value of $g^{\ast}$ at the endpoint of a level curve emanating from $\partial{\mathcal Q}_{\mbox\small{\rm base}}$  is independent of the level curve chosen. The proof is an application of the first Green identity (see Proposition~\ref{pr:Green id}).

\begin{Pro}
\label{pr:topisalevelcurve}
The curve $\partial{\mathcal Q}_{\mbox\small{\rm top}}$ is a level curve of $g^{\ast}$.
% in
%${\mathcal Q}_{\mbox\small{\rm slit}}$.
\end{Pro}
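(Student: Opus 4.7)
The plan is to interpret $g^{\ast}(v)$ for $v \in \partial{\mathcal Q}_{\mbox\small{\rm top}}$ as the total flux of $g$ across the closed level curve $L(v)$ viewed inside ${\mathcal A}$, and then to show that this flux is independent of $v$ via a first Green identity argument applied in ${\mathcal A}$ itself. The integration path in Equation~(\ref{eq:orthfun}) traverses $L(v)$ from $\pi(v) \in \partial{\mathcal Q}_{\mbox\small{\rm base}}$ to $v \in \partial{\mathcal Q}_{\mbox\small{\rm top}}$; after gluing the two copies of $\mbox{\rm slit}({\mathcal A})$ back together to recover ${\mathcal A}$, this path becomes precisely the simple closed curve $L(v) \subset {\mathcal A}$. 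The slit-side corrections prescribed at the end of Definition~\ref{de:definetheta} are calibrated so that the slit edge crossing $L(v)$ is accounted for exactly once, making $g^{\ast}(v)$ equal to the intrinsic loop flux
$$\Phi(v) \;=\; \sum_{x \in L(v)} \frac{\partial g}{\partial n}(D_v)(x),$$
where $D_v \subset {\mathcal A}$ is the topological disk bounded by $E_2$ and $L(v)$. The proposition therefore reduces to proving that $\Phi(v)$ is the same constant for every $v \in \partial{\mathcal Q}_{\mbox\small{\rm top}}$.

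Fix $v_1, v_2 \in \partial{\mathcal Q}_{\mbox\small{\rm top}}$ lying on two consecutive level curves $L_i = L(v_i)$ in ${\mathcal L}$ with $g(v_1) < g(v_2)$, and let $R \subset {\mathcal A}$ be the annular shell bounded by $L_1$ and $L_2$. Apply Proposition~\ref{pr:Green id} with $u = g$ and the constant test function $\varphi \equiv 1$, taking $F$ to be the set of interior vertices of $R$. The left-hand side vanishes because $\varphi(x) - \varphi(y) = 0$ on every edge, and $\sum_{x \in F} \Delta g(x) = 0$ because $g$ is harmonic at every interior vertex of ${\mathcal A}$. The identity therefore collapses to
$$0 \;=\; \sum_{x \in \delta F} \frac{\partial g}{\partial n}(F)(x).$$
By the type-II modification of Subsection~\ref{pa:simple}, consecutive level curves in ${\mathcal L}$ can be assumed to be at combinatorial distance at least two in ${\mathcal A}$, so $\delta F = L_1 \cup L_2$.

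At each $x \in L_1$, the neighbors of $x$ in ${\mathcal A}$ partition into those in $D_{v_1}$, those in $F$, and those on $L_1$ itself (the latter contributing zero since $g$ is constant on $L_1$), so harmonicity $\Delta g(x) = 0$ forces $\frac{\partial g}{\partial n}(F)(x) = -\frac{\partial g}{\partial n}(D_{v_1})(x)$. Symmetrically, the distance-two condition ensures that every neighbor of $x \in L_2$ with strictly smaller $g$-value lies in $F$, giving $\frac{\partial g}{\partial n}(F)(x) = \frac{\partial g}{\partial n}(D_{v_2})(x)$. Summing over $\delta F = L_1 \cup L_2$ yields $-\Phi(v_1) + \Phi(v_2) = 0$, hence $g^{\ast}(v_1) = g^{\ast}(v_2)$. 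The main technical obstacle is the first step: rigorously checking that the slit corrections in Definition~\ref{de:definetheta} are calibrated so that the ${\mathcal Q}_{\mbox\small{\rm slit}}$-integral truly equals the ambient loop flux $\Phi(v)$. Once this book-keeping is in place, the Green identity argument is essentially formal.
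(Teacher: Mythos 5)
Your proposal is correct and uses essentially the same argument as the paper: both apply Proposition~\ref{pr:Green id} with the test function $\varphi\equiv 1$ and $g$ on the annulus between two $g$-level curves, and both use harmonicity at the level curves to convert the resulting boundary flux identity into the equality of $g^{\ast}$-values. The only cosmetic differences are that you work with consecutive level curves (implicitly chaining) rather than an arbitrary pair, and you phrase the intermediate quantity as an intrinsic loop flux $\Phi(v)$ in ${\mathcal A}$, whereas the paper reads the same sum directly off Definition~\ref{de:definetheta} -- the slit-edge calibration you flag as ``book-keeping'' is precisely the step the paper invokes with the phrase ``It follows from Definition~\ref{de:definetheta}.''
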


\begin{proof}
Let $L_1$ and $L_2$ be any two level curves of $g$ which start at $\partial{\mathcal Q}_{\mbox\small{\rm base}}$ and have their endpoints  $x_1$, and $x_2$, on $\partial{\mathcal Q}_{\mbox\small{\rm top}}$, respectively. Let ${\mathcal A}_{(L_1,L_2)}$ denote the (interior of) the annulus whose boundary components are $L_1$, and $L_2$, respectively. Without loss of generality, assume that the $g$-value of $L_1$ is bigger than the $g$-value of $L_2$.  We must show that
\begin{equation}
\label{eq:fluxfortheta}
g^{\ast}(x_1)=g^{\ast}(x_2).
\end{equation}

We now add vertices of type I and II according to the procedure defined in Subsection~\ref{pa:simple}, so that the first Green identity, Proposition~\ref{pr:Green id}, may be applied to a Dirichlet boundary value problem on the network induced on ${\mathcal A}_{(L_1,L_2)}$.

Let $w \equiv 1$ be the constant function defined in ${\mathcal A}_{(L_1,L_2)}$. The assertion of Proposition~\ref{pr:Green id}, applied with the functions $w$ and $g$ on the induced network in ${\mathcal A}_{(L_1,L_2)}$, yields

\begin{equation}
\label{eq:fluxfortheta0}
\int_{x\in   {\mathcal T}^{(0)}\cap   \partial  {\mathcal A}_{(L_1,L_2)}}\frac{\bord g}{\bord n}({\mathcal A}_{(L_1,L_2)})(x)=0.
\end{equation}

Hence, it follows that

\begin{equation}
\label{eq:fluxfortheta00}
\int_{x\in {\mathcal T}^{(0)}\cap    L_2}\frac{\bord g}{\bord n}({\mathcal A}_{(L_1,L_2)})(x) +  \int_{y\in   {\mathcal T}^{(0)}\cap  L_1}\frac{\bord g}{\bord n}({\mathcal A}_{(L_1,L_2)})(y) =0.
\end{equation}
(Note that vertices of type I appear in both of the integrals, so one must apply Equation~(\ref{eq:prehar}) and the discussion preceding it to justify this equality.)
It follows from Definition~\ref{de:definetheta} that the second term in the above equations is equal to 
$g^{*}(x_1)$. Furthermore, since $g$ is harmonic in $ {\mathcal T}^{(0)}\cap   {\mathcal A}$, and since $L_2$ is a level curve of $g$, it follows  that 

\begin{equation}
\label{eq:fluxfortheta1}
\int_{x\in  {\mathcal T}^{(0)}\cap   L_2}\frac{\bord g}{\bord n}({\mathcal A}_{(L_1,L_2)})(x) +   \int_{x\in  {\mathcal T}^{(0)}\cap L_2}\frac{\bord g}{\bord n}({\mathcal A}_{(E_2,L_2)})(x) = 0.
\end{equation}

As above, it follows from Definition~\ref{de:definetheta} that the second term in the above equations is equal to 
$-g^{*}(x_2)$.
Therefore, Equations~(\ref{eq:fluxfortheta00}) and (\ref{eq:fluxfortheta1}) imply that 
\begin{equation}
\label{eq:fluxfortheta3}
g^{*}(x_1)= g^{*}(x_2).
\end{equation}
 This ends the proof of the Proposition. 
 \end{proof}

\begin{Rem}
With easy modifications, the proof goes through when $L_2=E_2$.
\end{Rem}

We now make

\begin{Def}
\label{de:period}
The \emph{period} of $g^{\ast}$ is  defined to be the  $g^{*}$ value on $\partial{\mathcal Q}_{\mbox\small{\rm top}}$, that is, 

\begin{equation}
\label{eq:periodoftheta}
\mbox{\rm period}(g^{\ast})=  g^{*}(\partial{\mathcal Q}_{\mbox\small{\rm top}}\cap {\mathcal T}^{(0)})
  =\int_{u\in {\mathcal T}^{(0)}\cap  E_1}\dfrac{\partial g}{\partial n}({\mathcal A}_{(E_2, E_1)})(u).
\end{equation}
\end{Def}

\medskip
Following similar arguments to these in the proof above, it is easy to check that $\mbox{\rm period}(g^{\ast})$ is independent of the choice of the added vertices of type II. Also, 
note that  $\mbox{\rm period}(g^{\ast})$  is independent of the choice of the level curve chosen or the slit chosen.  
Indeed,  it follows from  Proposition~\ref{pr:topisalevelcurve} that for a {\it fixed} slit the computation of the period is independent of the points chosen on the slit. 

\smallskip
Assume now that a different slit is chosen, and  let $\eta$ be the conjugate function corresponding to the new slit. It readily follows that $\mbox{\rm period}(\eta) = \mbox{\rm period}(g^{\ast})$.

\medskip

Indeed, start with any point $x$ on any of the two slits, let $l_x$, the (unique) level curve of $g$ passing through $x$. The computation of both periods is done by summing the normal derivative of $g$ along (the whole of) $l_x$, hence, they are equal. In fact, their common value is  the integral of the normal derivative of g along $E_2$ (unless $E_1$ is  chosen so an absolute value needs to be applied to the end result).

\bigskip
We now continue the study of the level curves of $g^{\ast}$.   Note that by the maximum principle (applied to $g$), and its definition, $g^{\ast}$ is monotone {\emph strictly increasing} along level curves of $g$. This property will now be  used in the following

\begin{Pro}
\label{le:levelcurvesoftheta}
Each level curve of $g^{\ast}$ has no endpoint in the interior  of  ${\mathcal Q}_{\mbox\small{\rm slit}}$, is simple, and joins $Q(E_1)$ to $Q(E_2)$. Furthermore, any two level curves of $g^{\ast}$ are disjoint.
\end{Pro}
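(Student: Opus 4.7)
The plan is to exploit two structural facts about the pair $(g, g^{\ast})$: the level curves of $g$ form a non-singular foliation of ${\mathcal Q}_{\mbox\small{\rm slit}}$ by arcs joining $\partial{\mathcal Q}_{\mbox\small{\rm base}}$ to $\partial{\mathcal Q}_{\mbox\small{\rm top}}$ (by Lemma 2.8 of \cite{Her1}), and $g^{\ast}$ restricted to any such arc is strictly increasing from $0$ to $\mbox{\rm period}(g^{\ast})$. The latter, already remarked before the statement, is the key input. To secure it in detail, I would first check that at consecutive $g^{\ast}$-vertices on a $g$-level curve $L$, the increment of $g^{\ast}$ is precisely a normal derivative $\frac{\partial g}{\partial n}({\mathcal Q}_u)(u)$ taken with respect to the interior component ${\mathcal Q}_u$. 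By the discrete maximum-minimum principle applied to ${\mathcal Q}_u$, every neighbor of $u$ inside ${\mathcal Q}_u$ has strictly smaller $g$-value than $g(u) = g(L)$, so the increment is strictly positive. Between consecutive vertices, $g^{\ast}$ is affine on a straight segment within a triangle, so monotonicity persists there. An analogous argument, based on Equation~(\ref{eq:orthfun1}) and the second case of Definition~\ref{de:definetheta}, handles the boundary curve $E_2$.

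Granting this, for any level value $c \in (0, \mbox{\rm period}(g^{\ast}))$, the level curve of $g^{\ast}$ at height $c$ meets every $g$-level curve $L$ in exactly one point $p_L(c)$: uniqueness is immediate from the strict monotonicity of $g^{\ast}|_L$, and existence follows from the intermediate value theorem since $g^{\ast}|_L$ covers $[0,\mbox{\rm period}(g^{\ast})]$ by Proposition~\ref{pr:topisalevelcurve} and Definition~\ref{de:period}. Parameterizing the $g$-level curves continuously by their $g$-value on $[0,k]$, from $E_2 = Q(E_2)$ at $g = 0$ to $E_1 = Q(E_1)$ at $g = k$, the assignment $L \mapsto p_L(c)$ sweeps out the $c$-level curve of $g^{\ast}$; the extremes of the parameterization produce its two endpoints, one on $Q(E_2)$ and one on $Q(E_1)$. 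This simultaneously shows that each level curve of $g^{\ast}$ joins $Q(E_1)$ to $Q(E_2)$ and admits no endpoint in the interior of ${\mathcal Q}_{\mbox\small{\rm slit}}$.

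Simplicity follows because a self-intersection would force either two distinct $g$-level curves to produce the same $p_L(c)$, contradicting the transverse parameterization above, or the same $g$-level curve to contain two preimages of $c$ under $g^{\ast}$, contradicting strict monotonicity. Disjointness of the level curves at distinct values $c_1 \neq c_2$ is immediate since $g^{\ast}$ is single-valued on ${\mathcal Q}_{\mbox\small{\rm slit}}$. The step I expect to be the main technical obstacle is the propagation of strict monotonicity from the vertices of $L$ to its full piecewise-linear trace, especially across quadrilateral cells where both $g$ and $g^{\ast}$ are defined by bilinear extension and level sets are in principle hyperbola arcs rather than segments. Handling this cleanly should follow from the positive sign of the normal-derivative increments together with the genericity convention adopted in the introduction that forbids adjacent vertices from sharing a $g$-value; this convention ensures that within each cell the bilinear interpolations are non-degenerate, and hence that the restriction of $g^{\ast}$ to the local arc of $L$ through the cell is monotone between its endpoint values, completing the argument.
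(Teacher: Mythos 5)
Your proof is correct and hinges on the same two facts the paper uses: strict monotonicity of $g^{\ast}$ along each $g$-level curve (noted just before the proposition, via the maximum principle) and the foliation of the slit quadrilateral by $g$-level curves running between the two copies of the slit. Where you diverge from the text is in organization: the paper proves each of the four assertions separately by contradiction, each time exhibiting a $g$-level curve that would be forced to meet the putative $g^{\ast}$-level curve in two points, whereas you construct the $c$-level curve of $g^{\ast}$ directly as the section $L\mapsto p_L(c)$ of the $g$-foliation, after which absence of interior endpoints, simplicity, the correct endpoints, and disjointness all fall out simultaneously. Your route is cleaner in one respect --- it makes the connectedness of each $g^{\ast}$-level set manifest, whereas the paper dispatches the case of ``two level curves at the same value'' with an appeal to ``similar arguments.'' It does, however, quietly rely on $p_L(c)$ depending continuously on $L$ in order to assemble the pointwise sections into a single arc; you correctly flag the relevant technical point (behavior of the bilinear extension over quadrilateral cells together with the genericity convention) at the end. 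This is at the same level of rigor as the paper's own treatment, so it is not a gap relative to the text, but a fully written-out version should include a sentence verifying that within each $2$-cell the $g^{\ast}$-level sets are segments or nondegenerate hyperbola arcs varying continuously with the level, so that $L\mapsto p_L(c)$ is indeed continuous.
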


\begin{proof}
Suppose that a level curve of $g^{\ast}$ which starts at $s\in Q(E_2)$ has an endpoint $\xi$ in 
$T\in {\mathcal T}^{(2)}$, where $T$ lies in the interior of 
${\mathcal A}$. Let $[s,\xi]$ be the intersection of this level curve with the interior of ${\mathcal A}$. Let $L_{\xi}$ denote the level curve of $g$ that passes through $\xi$.  Since the level curves of $g$ foliate ${\mathcal A}$, there exists a level curve  $L_\psi$ of $g$, which is as close as we wish to $L_{\xi}$, and such that its intersection with $[s,\xi]$ is empty.
Since $g^{\ast}$ is monotone increasing and continuous along $L_\psi$, it assumes all values between $0$ 
and $\mbox{\rm period}(g^{\ast})$. Hence, it will assume the value $g^{\ast}(\xi)$. This shows that no level curve of $g^{\ast}$ can have an interior endpoint.

Assume that one of the level curves of $g^{\ast}$ is not simple. Let ${\mathcal D}$ be any domain which is bounded by it. Since the level curves of $g$ foliate the annulus, one of these intersects the boundary of ${\mathcal D}$ in at least two points. The monotonicity of $g^{\ast}$ along the  level curves of $g$ renders this impossible.

Assume that  there exists a level curve of $g^{\ast}$, $L(g^{\ast})$, which does not join $Q(E_1)$ to $Q(E_2)$. By construction, each level curve of $g^{\ast}$ does not have an endpoint inside ${\mathcal A}$ and its intersection with each $2$-cell is a segment (or a point). Hence, both endpoints of  $L(g^{\ast})$ must lie on $Q(E_1)$ or on $Q(E_2)$.  Without loss of generality, assume that both endpoints are on $Q(E_1)$.   Hence, there must be a level curve of $g$ that intersects  $L(g^{\ast})$ in at least two points. Reasoning in a similar way to the paragraph above, this easily leads to a contradiction.

The fact that level curves of $g^{\ast}$ that correspond to the same value may not intersect each other follows from similar arguments to those appearing in the first parts of the proof. 

\end{proof}

\smallskip
Of special importance is the interaction between the level curves of $g^{\ast}$ and the level curves of $g$. The following proposition states that,  from a topological point of view, the union of the two families of level curves resembles  a planar coordinate system. 
This proposition is one  topological prerequisite for the proof of Theorem~\ref{th:pair is orth},  which will appear in the next subsection. %In the proof of the following proposition, the maximum principle and the topological structure of the level curves of $g$ play important roles.

\begin{Pro}
\label{th:topooflevel}
The number of intersections between any level curve of $g^{\ast}$ and any level curve of $g$ is equal to $1$. 
\end{Pro}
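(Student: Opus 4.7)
The plan is to reduce the counting problem to a one-dimensional monotonicity statement: I will show that on any level curve $L$ of $g$, regarded as a simple arc in ${\mathcal Q}_{\mbox{\rm slit}}$, the restriction of $g^{\ast}$ is a strictly monotone increasing continuous function taking the value $0$ at one endpoint and the value $\mbox{\rm period}(g^{\ast})$ at the other. Once this is in place, the intermediate value theorem, together with Proposition \ref{pr:topisalevelcurve} and Definition \ref{de:period}, supplies exactly one point of $L$ meeting any level curve of $g^{\ast}$.

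First I would fix a level value $a\in[0,k]$ and let $L$ be the corresponding level curve of $g$. By the analysis of \cite{Her1} invoked just before Definition~\ref{de:slit}, in ${\mathcal A}$ the curve $L$ is a simple closed curve separating $E_1$ from $E_2$; by Definition~\ref{de:slit}, the restriction of $g$ to $\mbox{\rm slit}({\mathcal A})$ is strictly monotone, so $L$ meets the slit exactly once. Cutting along the slit, $L$ becomes a simple arc in ${\mathcal Q}_{\mbox{\rm slit}}$ from a point on $\partial{\mathcal Q}_{\mbox{\rm base}}$ to a point on $\partial{\mathcal Q}_{\mbox{\rm top}}$. Enumerate its vertices in counter-clockwise order $u_0=\pi(v),u_1,\ldots,u_N$ on $\partial{\mathcal Q}_{\mbox{\rm top}}$. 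Definition~\ref{de:definetheta} expresses each increment $g^{\ast}(u_{j+1})-g^{\ast}(u_j)$ as the normal derivative $\frac{\bord g}{\bord n}({\mathcal Q}_{u_j})(u_j)$ (or its absolute value when $L=E_2$). Because every interior vertex of the piecewise-linear quadrilateral ${\mathcal Q}_{u_j}$ carries a $g$-value strictly smaller than $a$ (it lies on the $E_2$-side of $L$), the discrete maximum-minimum principle forces each such normal derivative to be strictly positive. Hence $g^{\ast}$ is strictly monotone increasing along the vertex sequence of $L$, starting at $g^{\ast}(\pi(v))=0$ and ending at $\mbox{\rm period}(g^{\ast})$ by Proposition~\ref{pr:topisalevelcurve}.

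Second, I would extend this vertex-level monotonicity to all of $L$ by the piecewise interpolation of Definition~\ref{de:definetheta}: in each triangle $T\in{\mathcal T}^{(2)}$, both $g$ and $g^{\ast}$ are affine, $L\cap T$ is a straight segment, and the restriction of $g^{\ast}$ to it is linear between vertex values, hence strictly monotone. Having so established that $g^{\ast}|_L$ is a continuous, strictly monotone bijection onto $[0,\mbox{\rm period}(g^{\ast})]$, the intermediate value theorem gives, for any level value $b$ of $g^{\ast}$, a unique point of $L$ where $g^{\ast}=b$; this is the unique intersection of $L$ with the corresponding level curve of $g^{\ast}$.

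The main obstacle will be verifying strict monotonicity inside the quadrilateral $2$-cells produced by the slit and by the type I/II additions of Subsection~\ref{pa:simple}, where $g^{\ast}$ is only bi-linear and $L\cap T$ need not be a straight segment. This is dealt with by noting that such cells are introduced in a controlled manner: the four vertex values of $g^{\ast}$ are strictly ordered along $L$ by the argument above, and by the choice of conductances in Equation~(\ref{eq:prehar}) the bi-linear interpolant of $g^{\ast}$ restricted to $L\cap T$ is monotone between its endpoint values. The degenerate cases $b=0$ and $b=\mbox{\rm period}(g^{\ast})$, where $L^{\ast}$ is $\partial{\mathcal Q}_{\mbox{\rm base}}$ or $\partial{\mathcal Q}_{\mbox{\rm top}}$, are trivial since $L$ meets each of these arcs in a single endpoint.
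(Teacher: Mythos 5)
Your proof is correct and rests on the same key fact the paper uses, namely that $g^{\ast}$ is strictly monotone increasing along each level curve of $g$ (asserted just before Proposition~\ref{le:levelcurvesoftheta} and used in its proof). The paper splits the count into ``at most one'' (via Proposition~\ref{le:levelcurvesoftheta}) plus ``at least one'' (both families foliate ${\mathcal Q}_{\mbox\small{\rm slit}}$), while you combine the two via the intermediate value theorem applied to $g^{\ast}|_L:[0,\mbox{\rm period}(g^{\ast})]$, and you fill in the positivity of the normal derivatives and the cell-by-cell monotonicity in more detail; this is a slightly more direct write-up of essentially the same argument.
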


\begin{proof}
It readily follows from the proof of Proposition~\ref{le:levelcurvesoftheta} that the number of intersections of any level curve of $g$ with any level curve of $g^{\ast}$ is at most equal to one. Since both families of level curves foliate 
${\mathcal Q}_{\mbox\small{\rm slit}}$, this number is equal
to one.

\end{proof}

%\newpage

\section{Constructing a harmonic conjugate and the proof of theorem~\ref{th:pair is orth}}
\label{se:proof} 

This section has three subsections. 
In the first, we define the {\it harmonic conjugate function} $h$ and study its immediate properties. In the second,  we provide the proof of Theorem~\ref{th:pair is orth}.  In the third, we  define the pair-flux metric and its induced length. These notions will be essential to the proof of Theorem~\ref{th:DBVP}  in which gluing two components of the complement of a singular level curve  of the solution takes place.

\subsection{A harmonic conjugate function}
We keep the notation of the previous section and modify Definition~\ref{de:dnbvp} to the  case of ${\mathcal Q}_{\mbox\small{\rm slit}}$.

 \begin{Def}[]
 \label{de:harconj}
The harmonic conjugate function ${h}$ is the solution of the discrete Dirichlet-Neumann boundary value problem defined by 
\begin{enumerate}
\item $h({{\mathcal T}^{(0)}\cap\partial{\mathcal Q}_{\mbox\small{\rm top}}})= \mbox{\rm period}(g^{\ast})$, \mbox{\rm and}\   \ $h({ {\mathcal T}^{(0)}\cap\partial{\mathcal Q}_{\mbox\small{\rm base}}})=0,$  
 
\medskip

\item $\dfrac{\bord {h}}{\bord n}({{\mathcal T}^{(0)}\cap Q(E_1)}) =  \dfrac{\bord {h}}{\bord n}({{\mathcal T}^{(0)}\cap Q(E_2)})=0$\  (\mbox{\rm other than at the four corners of}\ ${\mathcal Q}_{\mbox\small{\rm slit}}$),  

\medskip

\item $\Delta {h}=0\ \mbox{\rm at every (interior) vertex of ${\mathcal T}^{(0)}\cap {\mathcal Q}_{\mbox\small{\rm slit}}$, and }$

\medskip

\item $\int_{x\in {\mathcal T}^{(0)}\cap \partial{ \mathcal Q}_{\mbox\small{\rm slit}} }\frac{\bord {h}}{\bord n}(\partial \Omega)(x)=0,\ \mbox{\rm  where ($4$) is a necessary consistent condition}.$
\end{enumerate}
\end{Def}

\medskip

Consider now 
\begin{equation}
\label{eq:levelsetsh}
{\mathcal M}=\{M(v_0),\ldots,M(v_p)\},
\end{equation} 
 the collection of  level curves of $h$, that contain all the vertices in ${\mathcal T}^{(0)}$ arranged according to increasing values of $h$. It follows from the definition of $h$ that 
$M(v_0)= {\mathcal Q}_{\mbox\small{\rm base}} $ and $M(v_p)= {\mathcal Q}_{\mbox\small{\rm top}}$. 

\smallskip

We will now define the {\it conjugate function} of ${h}$, which will be denoted by $h^{\ast}$, to the case of the quadrilateral  
${\mathcal Q}_{\mbox\small{\rm slit}}$;  it is a straightforward modification of the definition of $g^{\ast}$ (Definition~\ref{de:definetheta}). 

\smallskip

Indeed, one recalls that by  \cite[Proposition 2.1]{Her2}  the level curves of $h$ are disjoint, piecewise-linear simple curves that foliate ${\mathcal Q}_{\mbox\small{\rm slit}}$ and join  $Q(E_1)$  to $Q(E_2)$. 

\smallskip

For $v\in {\mathcal Q}_{\mbox\small{\rm slit}} \setminus  {\mathcal Q}_{\mbox\small{\rm base}} $, which is in ${\mathcal T}^{(0)}$ or a vertex of type I, let $M(v)$ denote the unique level curve of $h$ which contains $v$. Let 
${\mathcal P}_{v}$ be the piecewise-linear quadrilateral whose boundary is defined by 
$Q(E_1), M(v), Q(E_2)$ and ${\mathcal Q}_{\mbox\small{\rm base}}$.
For $v\in  {\mathcal Q}_{\mbox\small{\rm base}}$,  recall that  ${\mathcal Q}_{\mbox\small{\rm base}} =M(v_0)$ is the unique level curve of $h$ which contains $v$. Let 
$\bar{{\mathcal P}}_{v}$ be equal to ${\mathcal Q}_{\mbox\small{\rm slit}}$.

%the piecewise-linear quadrilateral whose boundary is defined by 
%$${\mathcal Q}_{\mbox\small{\rm base}} , E_1, M(v_{1})$ and the rightmost edge emanating %from $v$ with its endpoint on $M(v_{1})$. 

\begin{Rem}
Note that  a vertex of type I is introduced whenever the intersection between an edge 
and the level curve does not belong to ${\mathcal T}^{(0)}$.
\end{Rem}

\begin{Def}[The conjugate function of $h$]
\label{de:conjofh}

Let $v$ be a vertex in ${\mathcal T}^{(0)}\cap {\mathcal Q}_{\mbox\small{\rm slit}}$ or a vertex of type I. Let
\begin{equation}
\label{eq:starting1}
\pi(v)=M(v)\cap Q(E_1). 
\end{equation}

\medskip
We define $h^{\ast}(v)$, the conjugate function of $h$, as follows.

\begin{description}
\item[First case] Suppose that $v\not\in {\mathcal Q}_{\mbox\small{\rm base}}$. Then 
\begin{equation}
\label{eq:orthfunh}
h^{\ast}(v) = \int_{\pi(v)} ^{v}\dfrac{\partial h}{\partial n}({\mathcal P}_{v})(u),
\end{equation}
where the integration is carried along (the vertices of) $M(v)$ (from $\pi(v)$ to $v$).
\medskip

\item[Second case] Suppose that $v\in   {\mathcal Q}_{\mbox\small{\rm base}} $. Then we define $h^{\ast}(v)$ by
\begin{equation}
\label{eq:orthfunh1}
h^{\ast}(v) = \int_{\pi(v)} ^{v}\big|\dfrac{\partial h}{\partial n}({ \bar {\mathcal P}}_{v})(u)\big|,
\end{equation}
\end{description}
where the integration is carried along (the vertices of) ${\mathcal Q}_{\mbox\small{\rm base}}$ (from $\pi(v)$ to $v$).

\smallskip

For a point $z\in {\mathcal Q}_{\mbox\small{\rm slit}}$, which is not a vertex as above,  $h^{\ast}(z)$ is defined by 
extending $h^{\ast}$ affinely 
 over edges and triangles, and bi-linearly over quadrilaterals.
\end{Def}

\medskip

We now turn to studying a few topological properties of the level curves of $h^{\ast}$ and their interaction with the level curves of $h$. The statements and the proofs are immediate generalizations of their counterparts in Section~\ref{se:rectnet}, and therefore we omit the proofs. The interaction between the level curves of $g$ and those of $h$ is subtle and will be treated in the next subsection. 

\medskip

By definition, $Q(E_1)$ is the level curve of $h^{\ast}$ which corresponds to $h^{\ast}=0$. It will follow that   $Q(E_2)$ is also a level curve of $h^{\ast}$. In other words, computing the value of $h^{\ast}$ at the endpoint of a level curve emanating from $Q(E_1)$  is independent of the level curve chosen.  We recall this property in

\begin{Pro}
\label{pr:rightisalevelcurve}
The curve $Q(E_2)$ is a level curve of $h^{\ast}$ in ${\mathcal Q}_{\mbox\small{\rm slit}}$.
\end{Pro}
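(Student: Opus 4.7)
The plan is to adapt the proof of Proposition~\ref{pr:topisalevelcurve} to the mixed Dirichlet--Neumann setting, with the new ingredient that the Neumann condition of Definition~\ref{de:harconj}(2) annihilates the boundary contributions on $Q(E_1)$ and $Q(E_2)$, replacing the role that topological closedness of the level curves of $g$ played in the annular setting.

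Fix two distinct level curves $M_1, M_2$ of $h$ with values $h_1 > h_2$ and endpoints $y_1 \in M_1 \cap Q(E_2)$, $y_2 \in M_2 \cap Q(E_2)$, $\pi(y_1) \in M_1 \cap Q(E_1)$, $\pi(y_2) \in M_2 \cap Q(E_1)$. Let ${\mathcal Q}_{(M_1,M_2)}$ denote the sub-quadrilateral of ${\mathcal Q}_{\mbox\small{\rm slit}}$ bounded by $M_1$, $M_2$, and the $Q(E_1)$- and $Q(E_2)$-arcs between the corresponding corner vertices. After inserting type I and type II vertices as in Subsection~\ref{pa:simple}, arranged so that (i) $M_1$ and $M_2$ are at combinatorial distance at least two, and (ii) every relatively interior vertex of $Q(E_j) \cap \partial {\mathcal Q}_{(M_1,M_2)}$ has no neighbor on $M_1 \cup M_2$, apply Proposition~\ref{pr:Green id} with $u = h$ and $v \equiv 1$ on $F$ equal to the interior vertex set of ${\mathcal Q}_{(M_1,M_2)}$. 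Harmonicity of $h$ on $F$ and constancy of $v$ collapse the identity to $\sum_{x \in \delta F} \partial h/\partial n({\mathcal Q}_{(M_1,M_2)})(x) = 0$.

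Decompose $\delta F$ along the four boundary arcs. By condition (ii), for any non-corner vertex $x$ on a $Q(E_j)$-arc, every interior neighbor of $x$ in ${\mathcal Q}_{\mbox\small{\rm slit}}$ already lies in the interior of ${\mathcal Q}_{(M_1,M_2)}$; hence $\partial h/\partial n({\mathcal Q}_{(M_1,M_2)})(x)$ equals $\partial h/\partial n({\mathcal Q}_{\mbox\small{\rm slit}})(x)$, which vanishes by Definition~\ref{de:harconj}(2). The contributions from both $Q$-arcs therefore vanish, leaving only the sums along $M_1$ and $M_2$. These are handled exactly as in the proof of Proposition~\ref{pr:topisalevelcurve}: condition (i) identifies the normal derivative with respect to ${\mathcal Q}_{(M_1,M_2)}$ at an $M_1$-vertex with its normal derivative with respect to ${\mathcal P}_{y_1}$, so by Definition~\ref{de:conjofh} the sum along $M_1$ equals $h^{*}(y_1)$; harmonicity of $h$ at the $M_2$-vertices, used as in the derivation leading to the analog of $-g^{*}(x_2)$ there, converts the $M_2$-sum into $-h^{*}(y_2)$. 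Thus $h^{*}(y_1) = h^{*}(y_2)$.

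The main technical obstacle is arranging the type II refinements to secure condition (ii), so that the Neumann condition bites directly and the $Q$-boundary sums vanish cleanly; this is routine since only finitely many edges need to be refined, though careful bookkeeping is needed at the four corner vertices $\pi(y_i), y_i$ in matching the normal derivatives on the two sides of each $M_i$. The degenerate case where one of $M_1, M_2$ equals $\partial {\mathcal Q}_{\mbox\small{\rm base}}$ requires the absolute-value form of the integrand from the second case of Definition~\ref{de:conjofh}, and is handled by the same easy modifications mentioned in the remark following Proposition~\ref{pr:topisalevelcurve}.
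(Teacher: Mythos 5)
Your argument is correct and is exactly the ``direct generalization'' the paper alludes to: you apply the first Green identity with the constant test function on the sub-quadrilateral between two $h$-level curves, just as the proof of Proposition~\ref{pr:topisalevelcurve} applied it on the annulus between two $g$-level curves, and then identify the $M_1$- and $M_2$-boundary sums with $h^{*}(y_1)$ and $-h^{*}(y_2)$ via the definition of $h^{*}$ and harmonicity. You also correctly supply the one new ingredient the paper's one-line sketch leaves implicit, namely that the Neumann condition in Definition~\ref{de:harconj}(2) annihilates the contributions from the $Q(E_1)$- and $Q(E_2)$-arcs of $\delta F$, playing the structural role that topological closedness of the $g$-level curves played in the annular setting.
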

The proof is an application of the first Green identity (see Proposition~\ref{pr:Green id}) and is a direct generalization of the method of proof of Proposition~\ref{pr:topisalevelcurve} applied to  ${\mathcal P}_{v}$  where $v\in E_2$.
Although not used in this paper, as a consequence of this proposition, we can now make

\begin{Def}
\label{de:width}
The \emph{width} of $h^{\ast}$ is  defined to be the $h^{*}$ value on $Q(E_2)$, that is,

\begin{equation}
\label{eq:periodofhast}
\mbox{\rm width}(h^{\ast})= h^{\ast}(Q(E_2)\cap {\mathcal T}^{(0)}).   
\end{equation}
\end{Def}

\bigskip

Note that by the maximum principle (applied to $h$), and by its definition, $h^{\ast}$ is monotone \emph{strictly} increasing along level curves of $h$. This property is used in  proving the following proposition in exactly the same way that the analogous property for the pair $\{g, g^{\ast}\}$ was used 
in the proof of Proposition~\ref{le:levelcurvesoftheta}.

\begin{Pro}
\label{le:levelcurvesofhast}
Each level curve of $h^{\ast}$ has no endpoint in the interior  of  ${\mathcal Q}_{\mbox\small{\rm slit}}$, is simple, and joins ${\mathcal Q}_{\mbox\small{\rm base}}$ to $ {\mathcal Q}_{\mbox\small{\rm top}} $. Furthermore, any two level curves of $h^{\ast}$ are disjoint.
\end{Pro}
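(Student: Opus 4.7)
The plan is to mirror, almost verbatim, the argument used for Proposition~\ref{le:levelcurvesoftheta}, replacing the pair $\{g,g^{\ast}\}$ by $\{h,h^{\ast}\}$ and swapping the roles of the boundary components. The two ingredients that made the earlier proof work were (i) the families of level curves of $g$ foliate the underlying domain, and (ii) $g^{\ast}$ is strictly monotone and continuous along each level curve of $g$, taking all values from $0$ to $\mbox{\rm period}(g^{\ast})$. Both of these facts have direct counterparts here: by \cite[Prop.~2.1]{Her2} (cited just above the definition of $h^{\ast}$), the level curves of $h$ form a non-singular foliation of ${\mathcal Q}_{\mbox\small{\rm slit}}$ by simple piecewise-linear arcs joining $Q(E_1)$ to $Q(E_2)$; and by the maximum principle applied to $h$ together with Definition~\ref{de:conjofh}, $h^{\ast}$ is strictly increasing and continuous along each such level curve of $h$, varying from $0$ on $Q(E_1)$ to $\mbox{\rm width}(h^{\ast})$ on $Q(E_2)$.

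With these tools in place, I would carry out four steps. \emph{Step 1 (no interior endpoints).} Suppose a level curve of $h^{\ast}$ starting at some $s\in {\mathcal Q}_{\mbox\small{\rm base}}$ terminated at an interior point $\xi$ lying in some triangle or quadrilateral $T\in{\mathcal T}^{(2)}$. Let $M_{\xi}$ be the level curve of $h$ through $\xi$. Because the level curves of $h$ foliate ${\mathcal Q}_{\mbox\small{\rm slit}}$, one can choose a level curve $M_{\psi}$ arbitrarily close to $M_{\xi}$ but disjoint from the arc $[s,\xi]$; by the monotonicity and continuity of $h^{\ast}$ along $M_{\psi}$ it must attain the value $h^{\ast}(\xi)$ somewhere on $M_{\psi}$, forcing a second point of the level set of $h^{\ast}$ and contradicting the choice of $M_{\psi}$. \emph{Step 2 (simplicity).} If some level curve were not simple, it would bound a subdomain ${\mathcal D}\subset {\mathcal Q}_{\mbox\small{\rm slit}}$. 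Since the level curves of $h$ foliate ${\mathcal Q}_{\mbox\small{\rm slit}}$, at least one such curve meets $\partial {\mathcal D}$ in two distinct points, again contradicting the strict monotonicity of $h^{\ast}$ along level curves of $h$.

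\emph{Step 3 (endpoints on opposite sides).} By Step~1 and the fact that the intersection of a level curve of $h^{\ast}$ with each $2$-cell is a segment or a point, both endpoints of any level curve of $h^{\ast}$ must lie on ${\mathcal Q}_{\mbox\small{\rm base}}\cup{\mathcal Q}_{\mbox\small{\rm top}}$. If both were on the same side (say ${\mathcal Q}_{\mbox\small{\rm base}}$), the curve together with an arc of ${\mathcal Q}_{\mbox\small{\rm base}}$ would bound a subdomain, and the monotonicity argument of Step~2 applied to a level curve of $h$ crossing into this subdomain would produce two points of the same $h^{\ast}$-value on it, contradiction. \emph{Step 4 (disjointness).} If two level curves corresponding to the same value of $h^{\ast}$ met, the argument of Step~1 or Step~2 immediately applies; if they correspond to distinct values, they trivially cannot meet.

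I expect no serious obstacle, since the proof is a mechanical transcription; the only point demanding a little care is verifying that the strict monotonicity of $h^{\ast}$ along level curves of $h$ indeed follows from the maximum principle for $h$ applied across the Dirichlet--Neumann problem of Definition~\ref{de:harconj} (so that no two adjacent vertices on a given level curve of $h$ share the same value of the normal derivative summand). This is precisely the analogue, in the $\{h,h^{\ast}\}$ setting, of the observation used silently in the proof of Proposition~\ref{le:levelcurvesoftheta}, and it follows from the standing convention that no two adjacent vertices share the same value of the harmonic function.
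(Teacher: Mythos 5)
Your proposal is correct and takes exactly the approach the paper intends: the paper itself does not write out a separate argument for Proposition~\ref{le:levelcurvesofhast}, but instead points to the strict monotonicity of $h^{\ast}$ along level curves of $h$ (via the maximum principle applied to $h$) and states that the proof proceeds ``in exactly the same way'' as for Proposition~\ref{le:levelcurvesoftheta}, which is precisely the transcription you carry out in Steps 1--4.
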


\smallskip

Of special importance is the interaction between the level curves of $h^{\ast}$ and the level curves of $h$. The following proposition will show that, from a topological point of view, the union of the two families of level curves of $\{h,h^{\ast}\}$ resembles a planar coordinate system. 
This proposition is another topological prerequisite for the proof of Theorem~\ref{th:pair is orth},  which will appear in the next subsection. %In the proof of the following proposition, the maximum principle and the topological structure of the level curves of $g$ play important roles.

\begin{Pro}
\label{th:topooflevelofhasth}
The number of intersections between any level curve of $h^{\ast}$ and any level curve of $h$ is equal to $1$. 
\end{Pro}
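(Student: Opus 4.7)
The plan is to mimic the proof of Proposition~\ref{th:topooflevel} exactly, replacing the roles of $\{g,g^{\ast}\}$ by $\{h,h^{\ast}\}$, and replacing the two boundary components $E_1,E_2$ of the annulus by the two pairs of opposite sides of the quadrilateral ${\mathcal Q}_{\mbox\small{\rm slit}}$. There are two inequalities to establish: that the number of intersections is at most one, and that it is at least one.

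First I would establish the upper bound. By the comment preceding Proposition~\ref{le:levelcurvesofhast}, $h^{\ast}$ is \emph{strictly} monotone along any level curve of $h$. Hence a fixed level curve of $h$ can meet any prescribed level set $\{h^{\ast}=c\}$ in at most one point, which gives the upper bound. (If one were tempted to worry about the piecewise-linear extensions introducing spurious multi-point intersections, one argues exactly as in the first paragraph of the proof of Proposition~\ref{le:levelcurvesoftheta}: an extra intersection would force the monotone, continuous function $h^{\ast}$ to repeat a value along a level curve of $h$.)

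Next I would establish the lower bound. By Proposition~\ref{le:levelcurvesofhast}, every level curve of $h^{\ast}$ is a simple piecewise-linear arc in ${\mathcal Q}_{\mbox\small{\rm slit}}$ joining ${\mathcal Q}_{\mbox\small{\rm base}}$ to ${\mathcal Q}_{\mbox\small{\rm top}}$; by the analogous statement \cite[Proposition 2.1]{Her2} cited just before Definition~\ref{de:conjofh}, every level curve of $h$ is a simple piecewise-linear arc joining $Q(E_1)$ to $Q(E_2)$. Since ${\mathcal Q}_{\mbox\small{\rm slit}}$ is a topological disk whose boundary is the union of the four arcs ${\mathcal Q}_{\mbox\small{\rm base}}$, $Q(E_2)$, ${\mathcal Q}_{\mbox\small{\rm top}}$, $Q(E_1)$ occurring in this cyclic order, a standard application of the Jordan curve theorem shows that any simple arc joining ${\mathcal Q}_{\mbox\small{\rm base}}$ to ${\mathcal Q}_{\mbox\small{\rm top}}$ must meet any simple arc joining $Q(E_1)$ to $Q(E_2)$ in at least one point. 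Combining the two bounds gives the equality.

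I do not anticipate any real obstacle: the only subtlety is the observation that $h^{\ast}$ is strictly monotone along level curves of $h$, which was recorded in the paragraph before Proposition~\ref{le:levelcurvesofhast}, and this is precisely what rules out double intersections. The rest is the same soft topological argument used for $\{g,g^{\ast}\}$ in Proposition~\ref{th:topooflevel}.
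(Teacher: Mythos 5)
Your proposal is correct and follows essentially the same route as the paper: the paper's proof of Proposition~\ref{th:topooflevelofhasth} is exactly the deferral to the argument of Proposition~\ref{th:topooflevel}, where the upper bound comes from strict monotonicity of the conjugate function along level curves and the lower bound from the fact that the two families of arcs join opposite pairs of sides of the quadrilateral ${\mathcal Q}_{\mbox\small{\rm slit}}$ and hence must meet. Your explicit invocation of the Jordan curve theorem for the lower bound is just a slightly more spelled-out version of the paper's remark that both families foliate ${\mathcal Q}_{\mbox\small{\rm slit}}$.
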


The proof is an immediate modification of the proof of Propostion~\ref{th:topooflevel} to the case of the pair $\{h,h^{\ast}\}$.

\subsubsection{\bf Viewing $h$ from a PDE perspective}
\label{se:conjugate}
The term ``harmonic conjugate" associated with $h$ is motivated by the first three properties used to define $h$ (Definition~\ref{de:harconj}). 
Hence, $h$ satisfies the combinatorial analogues of the analytical properties of
the polar angle function $v(r,\phi)=\phi$ in the complex plane, which is known to be, when it is single-value defined, the harmonic conjugate function of $v(r,\phi)=r$.

\medskip

\subsubsection{\bf Related work.}
Our definition of the harmonic conjugate function is motivated by the fact that, in the smooth category, a conformal map is determined by its real and imaginary parts, which are known to be harmonic conjugates. The search for discrete approximation of conformal maps has a long and rich history. We refer to \cite{Mer} and \cite[Section 2]{ChSm} as excellent recent accounts. 

%\smallskip 

%As we briefly mentioned in the introduction, a convincing evidence that a discrete scheme may converge to the Riemann mapping is supplied, for example,  by the solution to a conjecture suggested by Thurston and based on the \emph{Circle Packing Theorem} by Koebe-Andreev-Thurston. 
 %Thurston conjectured that a certain discrete scheme will converge to the Riemann mapping. The conjecture, which was first proved in 1987 by Rodin and Sullivan 
% (see \cite{RoSu}), provides a refreshing and geometric view on Riemann's Mapping Theorem.

\smallskip

We should also mention that a search for a combinatorial Hodge star operator has recently gained much attention and is closely related to the construction of a harmonic conjugate function. We refer the reader to  \cite{Hi} and to \cite{Po} for further details and examples for such combinatorial operators.

\subsection{The proof of Theorem~\ref{th:pair is orth}}
\label{se:proofofTheorem0.3}

Each vertex in
${\mathcal T}^{(0)}$ (which is  now a modification of the original one by adding all the   vertices  of type I in ${\mathcal Q}_{\mbox\small{\rm slit}}$) belongs to one and only one of the level curves of $h$.
Let 
\begin{equation}
\label{de:horizontal}
{\mathcal M}=\{M(v_0),\ldots,M(v_p)\}
\end{equation}
be defined according to Equation~(\ref{eq:levelsetsh}); this is  
 the set of level curves of $h$,  arranged according to increasing values of $h$, which contain all of the vertices mentioned above. Recall  that 
$M(v_0)= {\mathcal Q}_{\mbox\small{\rm base}} $ and $M(v_p)= {\mathcal Q}_{\mbox\small{\rm top}}$. Let 
 \begin{equation}
 \label{eq:radial1}
 {\mathcal L}=\{L(v_0),\ldots,L(v_k)\}
 \end{equation}
be defined according to Equation~(\ref{eq:levelsets}); this is  
 the set of level curves of $g$, arranged according to increasing values of $g$, which contain all of the vertices mentioned above. Recall that 
$Q(E_2)\subset L(v_0)$ and $Q(E_1)\subset L(v_k)$. 

\medskip

We will now study the following decomposition of $\Omega\cup\partial\Omega$.  

\begin{Def}
\label{de:Rectangular Net}
Let ${\mathcal R}$ be the decomposition of $\Omega\cup\partial\Omega$ induced by the  intersection of the sets 
$\{ {\mathcal M}, {\mathcal L}  \}$. 
\end{Def}

Since each one of  the sets of level curves of $g$, $h$, respectively, is clearly dense in $\Omega\cup\partial\Omega$, in order to prove Theorem~\ref{th:pair is orth} it suffices to establish 

\begin{Pro}[A rectangular net]
\label{th:netisrecyangular}
The number of intersections between any level curve of $g$ and any level curve of $h$ is equal to $1$. 
\end{Pro}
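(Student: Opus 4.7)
The plan is to follow the template of Propositions~\ref{th:topooflevel} and~\ref{th:topooflevelofhasth}. In those proofs the essential ingredient was that one member of the pair was, by construction, strictly monotone along the level curves of the other; this forced the single-intersection property. Here neither $g$ nor $h$ is defined in terms of the other, so the monotonicity has to be extracted from the boundary value problem satisfied by $h$.

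First I would record the topological setup. Cutting $\mathcal{A}$ along $\mbox{slit}(\mathcal{A})$ turns each level curve of $g$ into a simple piecewise-linear arc $L(\alpha)\subset\mathcal{Q}_{\mbox\small{\rm slit}}$ joining $\partial\mathcal{Q}_{\mbox\small{\rm base}}$ to $\partial\mathcal{Q}_{\mbox\small{\rm top}}$, since $g$ is monotone along the slit (Definition~\ref{de:slit}) and the level curves of $g$ foliate $\mathcal{A}$. The level curves $M(\beta)$ of $h$ are, by \cite[Proposition 2.1]{Her2}, simple piecewise-linear arcs joining $Q(E_1)$ to $Q(E_2)$ and foliating $\mathcal{Q}_{\mbox\small{\rm slit}}$. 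An arc joining one pair of opposite sides of a topological quadrilateral must meet an arc joining the other pair of opposite sides, which gives $|L(\alpha)\cap M(\beta)|\geq 1$ at once.

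The core of the argument is to show that $h$ is strictly monotone along every $L=L(\alpha)$; this immediately yields $|L\cap M(\beta)|\leq 1$. Suppose, to the contrary, that $h$ fails to be monotone along some $L$. Since $h$ takes the values $0$ and $\mbox{period}(g^{\ast})$ at the two endpoints of $L$ on $\partial\mathcal{Q}_{\mbox\small{\rm base}}$ and $\partial\mathcal{Q}_{\mbox\small{\rm top}}$, there exist a value $\beta$ and two points $s_1,s_2\in L$ with $h(s_1)=h(s_2)=\beta$ and, without loss of generality, $h>\beta$ on the open sub-arc $\gamma_L\subset L$ between them. Together with the sub-arc $\gamma_M\subset M(\beta)$ joining $s_1$ to $s_2$, $\gamma_L$ bounds a simply connected region $R\subset\mathcal{Q}_{\mbox\small{\rm slit}}$. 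After the type-I and type-II vertex modifications of Subsection~\ref{pa:simple}, $h$ is combinatorially harmonic at the interior vertices of $R$, equals $\beta$ on $\gamma_M$, and strictly exceeds $\beta$ on the interior of $\gamma_L$, so the discrete strong maximum principle forces $h>\beta$ throughout the interior of $R$. Consequently, for any $\beta'\in(\beta,\max_R h)$ the level curve $M(\beta')$ meets the interior of $R$; since it cannot cross $\gamma_M\subset M(\beta)$, each connected component of $M(\beta')\cap R$ has both endpoints on $\gamma_L$, which yields $|L\cap M(\beta')|\geq 2$.

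To close the contradiction I would combine the previous step with the parity observation that $|L\cap M(\beta')|$ is odd for all $\beta'\in(0,\mbox{period}(g^{\ast}))$ (because $M(\beta')$ joins $Q(E_1)$ and $Q(E_2)$, which lie on opposite sides of the separating arc $L$ in $\mathcal{Q}_{\mbox\small{\rm slit}}$), together with the fact that $|L\cap M(\beta')|=1$ once $\beta'$ is sufficiently close to $0$ or to $\mbox{period}(g^{\ast})$, where $M(\beta')$ is close to $\partial\mathcal{Q}_{\mbox\small{\rm base}}$ or $\partial\mathcal{Q}_{\mbox\small{\rm top}}$ and meets $L$ only near its base or top endpoint. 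Tracking the intersection count as $\beta'$ varies across the range produced in the previous paragraph then contradicts the existence of the region $R$. The hardest step will be this last one: formulating a clean semicontinuity statement for the piecewise-linear intersection count under the generic-triangulation convention imposed in the paper, and verifying that the type-I/II vertex machinery of Subsection~\ref{pa:simple} really allows the discrete strong maximum principle to be applied on $R$ without losing harmonicity along $\gamma_M$ or $\gamma_L$.
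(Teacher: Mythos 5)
Your proposal takes a genuinely different route from the paper. The paper does \emph{not} try to prove directly that $h$ is monotone along a level curve $L$ of $g$; instead it reduces the claim to a short lemma that uses the auxiliary conjugate functions $g^{\ast}$ and $h^{\ast}$: if $L$ meets an $h$-level-curve $M$ in two points $u,v$, the disk $D(u,v)$ cut off by the two arcs forces a nearby level curve of $h^{\ast}$ (or, in the other position, of $g^{\ast}$) to cross $M$ (resp.\ $L$) twice, contradicting Propositions~\ref{th:topooflevelofhasth} and \ref{th:topooflevel}. The entire weight of the argument rests on the already-established single-intersection property of the pairs $\{h,h^{\ast}\}$ and $\{g,g^{\ast}\}$, which is available by construction because $g^{\ast}$ (resp.\ $h^{\ast}$) is strictly monotone along level curves of $g$ (resp.\ $h$). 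You instead bypass $g^{\ast}$ and $h^{\ast}$ altogether and attempt to extract strict monotonicity of $h$ along $L$ from the discrete maximum principle plus a parity count.

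There is, however, a genuine gap in your plan, one you partially acknowledge: the maximum principle and the parity observation together do \emph{not} close the contradiction. What the strong minimum principle gives you on the region $R$ is that $h>\beta$ in $\operatorname{int}R$, and what parity gives you is that $|L\cap M(\beta')|$ is odd and $\geq 3$ for $\beta'$ slightly above $\beta$, while it equals $1$ near the ends of $[0,\operatorname{period}(g^{\ast})]$. But a piecewise-linear intersection count that is $1$ near the ends and $\geq 3$ in the middle is not, by itself, impossible: the count can jump up by $2$ at a local maximum of $h|_L$ and back down by $2$ at the subsequent local minimum, and continuity/semicontinuity of the count across such critical values is exactly what permits this, not what forbids it. To rule out a ``bump'' of $h$ along $L$ you need some combinatorial substitute for the smooth Cauchy--Riemann fact that $\nabla g$ and $\nabla h$ cannot be parallel at a nonsingular point; the maximum principle applied to $h$ alone on $R$ does not encode that, because $\gamma_L$ is a level curve of $g$, not of $h$, so you have no control on the sign of $\partial h/\partial n$ across $\gamma_L$. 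This is precisely the obstruction that the paper's conjugate-function device ($g^{\ast}$, $h^{\ast}$, and their proven single-intersection properties) is built to circumvent. Unless you can supply an argument that forbids the $1\to 3\to 1$ profile of the intersection count — and the Green-identity/flux variants one might try on $R$ also run into the same sign-ambiguity on $\gamma_L$ — the key step remains unproved. I would recommend either reinstating $g^{\ast}$ and $h^{\ast}$ as in the paper's lemma, or finding a genuinely new combinatorial orthogonality statement for $\{g,h\}$ to replace them.
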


Once this proof is furnished, it will follow that the each $2$-cell in ${\mathcal R}$ is a quadrilateral, where each pair of opposite boundaries is contained in successive level sets of $h$ or in successive level sets of $g$. Note that a vertex is formed in ${\mathcal R}^{(0)}$, whenever a level set of $g$ and a level set of $h$ intersect.

\bigskip

\noindent\emph{Proof of Proposition~\ref{th:netisrecyangular}.} We argue by contradiction.
It follows from \cite[Lemma 2.8]{Her1} and \cite[Proposition 2.1]{Her2} that the level curves of $g$ as well as the level curves of $h$ foliate ${\mathcal Q}_{\mbox\small{\rm slit}}$; hence, the number of intersections between a level curve of $g$ and a level curve of $h$ is at least one. 

\medskip

By Proposition~\ref{th:topooflevelofhasth}, the number of intersections between any level curve of $h$ and any level curve of $h^{\ast}$ is exactly one. Hence, the proof of the Proposition will readily follow from the following lemma, where the level curves of $g^{\ast}$ and $h^{\ast}$ play an important role.
\begin{Lem}
Suppose that a $g$-level curve $L(v_i)$ intersects an $h$-level curve $M(v_j)$ in at least two points. Then there exists a level curve of  $h^{\ast}$ which intersects $M(v_j)$ in  at least two points. 
\end{Lem}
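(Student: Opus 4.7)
I would begin by picking two intersections $p_1, p_2 \in L(v_i) \cap M(v_j)$ that are adjacent along $L(v_i)$, so that the open subarc $\alpha \subset L(v_i)$ between them meets $M(v_j)$ only at its endpoints. Together with the subarc $\beta \subset M(v_j)$ between $p_1$ and $p_2$, this bounds a topological disk $D \subset {\mathcal Q}_{\mbox\small{\rm slit}}$. Applying the maximum principle to the harmonic function $h$ on $\bar D$ -- since $h = h(v_j)$ on $\beta$ and, by the adjacency of $p_1, p_2$, $h$ on $\alpha^{\circ}$ lies strictly on one side of $h(v_j)$ -- I may assume without loss of generality that $h > h(v_j)$ throughout $D^{\circ} \cup \alpha^{\circ}$. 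Setting $\tau_i := h^\ast(p_i)$ for $i=1,2$, the strict monotonicity of $h^\ast$ along $M(v_j)$ (established in the paper via the maximum principle applied to $h$) gives $\tau_1 \neq \tau_2$; reorder so that $\tau_1 < \tau_2$. Similarly, strict monotonicity of $g^\ast$ along $L(v_i)$ yields $g^\ast(p_1) \neq g^\ast(p_2)$.

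For each $\sigma \in (\tau_1, \tau_2)$, monotonicity of $h^\ast$ restricted to $\beta$ singles out a unique $q_\sigma \in \beta$ with $h^\ast(q_\sigma) = \sigma$. By Proposition~\ref{le:levelcurvesofhast}, the $h^\ast$-level curve $N_\sigma$ is a simple arc from $\partial{\mathcal Q}_{\mbox\small{\rm base}}$ to $\partial{\mathcal Q}_{\mbox\small{\rm top}}$ passing through $q_\sigma$. Since both endpoints of $N_\sigma$ lie outside $D$, the total number of transverse crossings of $N_\sigma$ with $\partial D = \alpha \cup \beta$ is even; combined with the single crossing at $q_\sigma \in \beta$, this forces $N_\sigma$ to cross $\alpha$ at an odd number of points -- in particular at least once. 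Tracking the resulting continuous family of exit points $r_\sigma \in \alpha$, I deduce that $h^\ast|_\alpha$ attains every value in $(\tau_1, \tau_2)$.

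I would then invoke the conjugate function $g^\ast$ to close the argument. By Proposition~\ref{th:topooflevel}, each $g^\ast$-level curve meets $L(v_i)$ in exactly one point, so the $g^\ast$-values on $\alpha$ vary strictly monotonically between $g^\ast(p_1)$ and $g^\ast(p_2)$. Combining this with the information on $h^\ast|_\alpha$ and tracking how $r_\sigma$ behaves as $\sigma \to \tau_1^+$ and $\sigma \to \tau_2^-$, the combined monotonicity constraints coming from the two conjugate pairs $(g,g^\ast)$ and $(h,h^\ast)$ force some $N_\sigma$ to re-enter $D$ and cross $\beta$ at a second point -- that is, to meet $M(v_j)$ in two distinct points. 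The principal obstacle is making this last step rigorous: upgrading the forced crossings of $N_\sigma$ with $\alpha$ to a forced second crossing with $\beta$, rather than allowing $N_\sigma$ to escape $D$ only through $\alpha$. This is precisely the subtle point where, as noted in the paragraph preceding the Lemma, the level curves of \emph{both} conjugate functions $g^\ast$ and $h^\ast$ are simultaneously needed to complete the combinatorial/topological argument.
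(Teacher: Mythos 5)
Your proposal follows the same overall framework as the paper's proof: form the disc $D$ bounded by a subarc $\alpha$ of $L(v_i)$ and a subarc $\beta$ of $M(v_j)$, and then use level curves of the conjugate functions $g^\ast$ and $h^\ast$ (together with Propositions~\ref{th:topooflevel} and~\ref{th:topooflevelofhasth}) to derive the contradiction. The calculation that $u$ and $v$ lie on opposite sides of each $N_\sigma$, forcing $N_\sigma$ to cross $\alpha$, is correct and is a sharpening of the parity count.

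However, there is a genuine gap, which to your credit you flag yourself. The argument as written never forces a second intersection of any $N_\sigma$ with $M(v_j)$: every $N_\sigma$ could enter $D$ once across $\beta$ at $q_\sigma$, exit once across $\alpha$, and never return, and nothing you have derived about $g^\ast|_\alpha$ and $h^\ast|_\alpha$ rules this out. The reason the step is hard is that you are trying to prove the Lemma \emph{as literally stated}, namely that the contradiction is always witnessed by a level curve of $h^\ast$. The paper does not actually do that. Its proof is a case analysis on the position of the disc $D(u,v)$ relative to $L(v_i)$: for disc configurations where $\beta$ bulges off of $L(v_i)$, a level curve of $h^\ast$ taken Hausdorff-close to $L(v_i)$ is forced to cross $M(v_j)$ twice, contradicting Proposition~\ref{th:topooflevelofhasth}; for the ``under or above'' configurations (where $\alpha$ is the bulging arc), the paper switches to a level curve of $g^\ast$ Hausdorff-close to $M(v_j)$ and contradicts Proposition~\ref{th:topooflevel} instead. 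In the second family of cases the paper does \emph{not} produce a level curve of $h^\ast$ meeting $M(v_j)$ twice, so the statement being proved is really the contradiction itself rather than the Lemma as phrased. Your instinct that both conjugate pairs $(g,g^\ast)$ and $(h,h^\ast)$ are needed is exactly right, but the paper resolves this by a case split rather than by a single argument that combines both monotonicities; if you insist on always producing an $h^\ast$ level curve with a double intersection, you will not be able to close the argument in the ``under/above'' case, which is precisely the obstruction you encountered.

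Two smaller points. First, your parity count should read: either some $N_\sigma$ already crosses $\beta$ at a second point (done), or $N_\sigma\cap\beta=\{q_\sigma\}$ and then $N_\sigma$ crosses $\alpha$ an odd number of times; you slightly overstate it by concluding a crossing of $\alpha$ unconditionally. Second, the paper's ``close in the Hausdorff distance'' step is itself terse and leans on unstated topological facts, so your attempt to replace it with the explicit family $N_\sigma$ parametrized over $\beta$ is a reasonable way to try to make things more rigorous, even though it stalls; the honest route to a complete proof is to set up the symmetric family of $g^\ast$-level curves through points of $\alpha$, and argue the two cases separately exactly as the paper does.
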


\begin{proof}
There are several cases to consider. 
First, assume that $u,v$ are the first two intersection points of  $L(v_i)$ and $M(v_j)$, arranged by their increasing $g$-values. Let $ L([u,v]) \subset L(v_i)$ be the arc (on $L(v_i)$) which joins $u$ to $v$, and let $M([u,v])\subset M(v_j)$ be the arc (on  $M(v_j)$) which joins them. Further assume that these arcs are disjoint other than their endpoints.

\begin{figure}[htbp]
\begin{center}
 \scalebox{.50}{ \input{orthonet.pstex_t}}
 \caption{$L_{h}^{\ast}(v_i)$.}
\label{figure:orthonet}
\end{center}
\end{figure}

\medskip

Assume now that the disc $D(u,v)$ whose boundary is 
$L([u,v])\cup M([u,v])$ is to the {\it left} or to the {\it right} of $L(v_i)$.
Each level curve of $h^{\ast}$ is simple, joins  ${\mathcal Q}_{\mbox\small{\rm base}}$ to 
${\mathcal Q}_{\mbox\small{\rm base}}$, and the union of which foliates ${\mathcal Q}_{\mbox\small{\rm slit}}$.

Since ${\mathcal Q}_{\mbox\small{\rm slit}}$ is planar, standard arguments employing the Jordan Curve Theorem imply that there exists (at least) one level curve  of $h^{\ast}$, $L_{h}^{\ast}(v_i)$, close (in the Hausdorff distance) to $L(v_i)$, which intersects $M(v_j)$ in at least two points. This contradicts the assertion of Proposition~\ref{th:topooflevelofhasth}. 
The case in which $L([u,v])=M([u,v])$ follows by a simple modification of the above argument.

\medskip

Analogous cases, in which the disc $D(u,v)$  lies {\it under} or {\it above}  $L(v_i)$, are treated by employing a nearby (to $M(v_j)$) level curve of $g^{\ast}$ which yields a contradiction to  the assertion of Proposition~\ref{th:topooflevel}.  

\end{proof}

\medskip

\eop{\ref{th:netisrecyangular}}

Thus, the proof of Theorem~\ref{th:pair is orth} is now complete.

\eop{\ref{th:pair is orth}}

%\newpage

\subsection{The pair-flux length} 
\label{se:fgmetric}

In this subsection, we will define a notion of \emph{length} for the level curves of $g$ and of $g^{\ast}$.  To give some perspective, recall that 
in \cite{Du}, Duffin defined a metric to be a function $\tau:E\ra[0,\infty]$. More recently, 
in \cite{Ca}, Cannon defined a {\it discrete metric} to be a function $\rho : V\ra [0,\infty)$. The length of a path is then given by integrating $\tau,\rho$ along it, respectively.  

\smallskip

In \cite[Definition 1.9]{Her1}, we defined a metric (in Cannon's sense) which utilized  $g$,  the solution of the boundary value problem, alone.  
 
\smallskip
 
Motivated by the planar Riemannian case (see Equation (\ref{eq:polarmetric})), we will define new notions of metric and length for level curves of $g$ in ${\mathcal Q}_{\mbox\small{\rm slit}}$ and  thereafter in ${\mathcal A}$. These notions will incorporate both $g$ and $g^{\ast}$. (We will of course consider these notions for level curves that are given in their minimal form.) 
 
\smallskip 
%Let $L=\{v_0,v_1,\ldots,v_n\}$ be a path in ${\mathcal R}$. 

\begin{Def}
\label{de:pairflux}
With the notation of the previous sections, we define the following:
\begin{enumerate}
\item For $e=[e^{-},e^{+}]$,  
let $\psi(e)=e^{-}$ be the map which associates to an edge its initial vertex. The {\it pair-flux weight}  of $e$  is  defined by
\begin{equation}
\label{eq:gradient metric}
\rho(e)=  \frac{2 \pi}{\mbox{\rm period}(g^{\ast}) } \exp\big( \frac{2 \pi}{\mbox{\rm period}(g^{\ast}) }    g(\psi(e))\big) |dh(e)|
=
\frac{2 \pi}{\mbox{\rm period}(g^{\ast}) } \exp\big( \frac{2 \pi}{\mbox{\rm period}(g^{\ast}) }   g(e^{-}))\big) |dh(e)|, 
\end{equation}
where $dh(e)= h(e^{+})-h(e^{-})$.
\smallskip
\item
Let $L$ be any path in ${\mathcal R}$; then its {\it length} with respect to the pair-flux weight is given by integrating $\rho$ along it, 
\begin{equation}
\label{eq:length gradient metric1}
\mbox{\rm Length}(L)=\int_{e\in L} \rho(e).
\end{equation}
\end{enumerate}
\end{Def}
\smallskip
In the applications of this paper, we will use the pair-flux weight  to provide a notion of length to level curves of $g$. Thus, by the assertion of Proposition~\ref{pr:topisalevelcurve}, we may now deduce 
\begin{Cor}
\label{co:lengthlevelg}
Let $L(v)$ be a closed level curve of $g$ (oriented counter-clockwise), and let $0\leq m=g({L(v)})\leq k$; then we have
\begin{equation}
\label{eq:levelg}
\mbox{\rm Length}(L(v))= 2 \pi \exp\big(\frac{2 \pi}{\mbox{\rm period}(g^{\ast})} \,m\big).
\end{equation}
\end{Cor}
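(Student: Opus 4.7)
The plan is to exploit the fact that $g$ is constant along $L(v)$, which collapses the length integral to a product of a constant factor and $\sum_{e\in L(v)}|dh(e)|$. Since every initial vertex $e^{-}$ of an edge $e\subset L(v)$ satisfies $g(e^{-})=m$, the exponential in Equation~(\ref{eq:gradient metric}) may be pulled out of the sum, yielding
\[
\mbox{\rm Length}(L(v)) = \frac{2\pi}{\mbox{\rm period}(g^{\ast})}\exp\!\Big(\frac{2\pi\,m}{\mbox{\rm period}(g^{\ast})}\Big)\sum_{e\in L(v)}|dh(e)|,
\]
so the entire problem reduces to showing that $\sum_{e\in L(v)}|dh(e)|=\mbox{\rm period}(g^{\ast})$.

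To evaluate this sum I would cut ${\mathcal A}$ along $\mbox{\rm slit}({\mathcal A})$, which turns $L(v)$ into an arc in ${\mathcal Q}_{\mbox\small{\rm slit}}$ running from a point $p_{0}\in\partial{\mathcal Q}_{\mbox\small{\rm base}}$ (where $h=0$ by Definition~\ref{de:harconj}) to the corresponding point $p_{1}\in\partial{\mathcal Q}_{\mbox\small{\rm top}}$ (where $h=\mbox{\rm period}(g^{\ast})$). By Theorem~\ref{th:pair is orth} the pair $\{g,h\}$ is combinatorially orthogonal filling, so $L(v)$ meets every level curve of $h$ in exactly one point. Since the level curves of $h$ form a non-singular foliation of ${\mathcal Q}_{\mbox\small{\rm slit}}$ by values in $[0,\mbox{\rm period}(g^{\ast})]$, and since $h$ is piecewise linear (hence continuous) along $L(v)$, this forces $h|_{L(v)}$ to be a continuous bijection onto $[0,\mbox{\rm period}(g^{\ast})]$, and therefore strictly monotone.

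With monotonicity established, the increments $dh(e)$ along $L(v)$ all carry the same sign, so $|dh(e)|$ differs from $dh(e)$ by a uniform $\pm$, and the sum telescopes:
\[
\sum_{e\in L(v)}|dh(e)| = |h(p_{1})-h(p_{0})| = \mbox{\rm period}(g^{\ast}).
\]
Substituting into the displayed expression for $\mbox{\rm Length}(L(v))$ gives exactly the stated formula. The independence of $\mbox{\rm period}(g^{\ast})$ from the choice of slit, noted in the discussion following Definition~\ref{de:period}, ensures that the answer is intrinsic to $({\mathcal A},\partial{\mathcal A},{\mathcal T})$.

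The step I expect to require the most care is the strict monotonicity of $h$ along $L(v)$. Theorem~\ref{th:pair is orth} supplies only a cardinality statement about intersections of level curves, and one must combine it with continuity of the affine extension of $h$, the foliation property recorded in Definition~\ref{de:pair}, and the careful handling of type~I vertices introduced along $L(v)$, in order to conclude that $h$ sweeps $[0,\mbox{\rm period}(g^{\ast})]$ exactly once as one traverses $L(v)$. Once that qualitative fact is in place the computation itself is routine.
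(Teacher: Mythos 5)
Your proof is correct and follows what the paper intends, though it is more careful than the paper's one-line deduction. The reduction to $\sum_{e\in L(v)}|dh(e)|=\mbox{\rm period}(g^{\ast})$ is exactly right, and you correctly identify that the nontrivial step is strict monotonicity of $h$ along $L(v)$, which you supply via Theorem~\ref{th:pair is orth} (combinatorial orthogonal filling) rather than via Proposition~\ref{pr:topisalevelcurve}, the citation the paper attaches to the corollary. That proposition concerns $g^{\ast}$ (it makes $\mbox{\rm period}(g^{\ast})$ well defined, and hence, through Definition~\ref{de:harconj}, fixes the boundary value of $h$ on $\partial{\mathcal Q}_{\mbox\small{\rm top}}$), but by itself it does not yield monotonicity of $h$ on a $g$-level curve; your appeal to the orthogonal-filling property closes that gap cleanly. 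The only minor imprecision is describing $h|_{L(v)}$ as ``piecewise linear'' — in a quadrilateral $2$-cell $h$ is bilinear and $L(v)$ need not be a straight segment — but continuity, which is all you actually use, still holds, so the argument stands.
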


\medskip

The definition of the pair-flux length is one of the new advances of this paper. Whereas in \cite{Her1,Her2,Her3} other notions of lengths utilizing only the solution $g$ were introduced, the  pair-flux length incorporates the pair $\{g,h\}$.  
This appealing feature is motivated by the case in 
the smooth category, i.e., for $z=r\exp{(i\theta)}$ in the complex plane, we have
\begin{equation}
\label{eq:polarmetric}
dz=i r\exp{(i\theta)}d\theta +\exp{(i\theta)} dr.
\end{equation}

\medskip

We will now provide a notion of length to the level curves of $h$ in ${\mathcal Q}_{\mbox\small{\rm slit}}$,  and  thereafter in ${\mathcal A}$. 
Keeping the analogy with the planar Riemannian case, the restriction of the Euclidean length element to level curves of the function $v(r,\phi)=\phi_0$, has the form
\begin{equation}
\label{eq:radial}
|d z|= dr.
\end{equation}

\begin{Def}
\label{de:lengthfortheata}
Let $L(h)=(v_0,\ldots,v_k)$ be a level curve of $h$ with $v_0\in E_2$ and $v_k\in E_1$, then its length  is given by 
\begin{equation}
\label{eq:length gradient metric2}
\mbox{\rm Length}(L(h))=\exp(g(v_k))-\exp(g(v_0))=\exp(k)-1. % \frac{\bord g^{\ast}}{\bord n}({\mathcal Q}_g^{\ast})(v).
\end{equation}
\end{Def}

\medskip

\begin{Rem}
It is a consequence of Proposition~\ref{pr:rightisalevelcurve} that any two level curves of $h$ have the same length.
\end{Rem}

\bigskip

%\newpage

\section{The cases of an annulus and an annulus with one singular boundary component}
\label{se:AnnandQaud}

\subsection{The case of an annulus}
\label{se:Annulus}
In this subsection, we study the important case of an annulus. It is the first case, in terms of the connectivity of the domain $\Omega$, of  the one described in  Definition~\ref{de:dbvp} (Subsection~\ref{pa:bvp}). Let  ${\mathcal R}$ be the rectangular net associated with the combinatorial orthogonal filling pair 
$\{g, h\}$ which was constructed in the proof of Theorem~\ref{th:pair is orth}.

\smallskip

We use the term  {\it measure} on the space of quadrilaterals in ${\mathcal R}^{(2)}$ to denote a non-negative set function defined on ${\mathcal R}^{(2)}$. An example of such, which will be used  in Theorem~\ref{th:annulus}, is provided  in 

\begin{Def}
\label{de:combdirichletmeasure}
For any $R\in {\mathcal R}^{(2)}$, let $R_{\mbox\small{\rm top}}, R_{\mbox\small{\rm base}}$ be the pair of its opposite boundaries that are contained in successive level sets of $g$; we will denote them by the 
top and base boundaries of $R$, respectively (where the top boundary corresponds to a larger value of $g$).  Let 
$t\in R_{\mbox\small{\rm top}}^{(0)}$ and $b\in  R_{\mbox\small{\rm base}}^{(0)}$ be any two vertices. Then we let 

\begin{equation}
\label{eq:dfm}
\nu(R)=\frac{1}{2}\big(\exp^{2}(\frac{2 \pi}{\mbox{\rm period}(g^{\ast})}g(t))  -  \exp^{2}(\frac{2 \pi}{\mbox{\rm period}(g^{\ast})}g(b))    \big) \frac{2 \pi\, dh(R_{ \mbox\small{\rm base} })}{\mbox{\rm period}(g^{\ast})}. 
\end{equation}
 \end{Def}

\begin{Rem}
By the construction of ${\mathcal R}$, all the vertices in $R_{\mbox\small{\rm top}}\  (R_{\mbox\small{\rm base}})$ have the same $g$ values and $dh(R_{ \mbox\small{\rm base} })= dh(R_{ \mbox\small{\rm top} })$. 
\end{Rem}

\smallskip

We now turn to the

\medskip

\noindent{\bf Proof of Theorem~\ref{th:annulus}}.
Recall (see the discussion preceding the proof of Theorem~\ref{th:netisrecyangular}) 
 that the vertices in ${\mathcal R}^{(0)}$ are comprised of all the intersections of the level curves of $g$ (the family ${\mathcal L}$) and the level curves of $h$ (the family ${\mathcal M}$). Thus, the  vertex $(i,j)$ will denote the unique vertex determined by the intersection of $L(v_i)$ and 
$M(v_j)$; the existence and uniqueness of this intersection are consequences of
Theorem~\ref{th:netisrecyangular}.

\medskip

The harmonic conjugate function $h$ is single-valued on ${\mathcal Q}_{\mbox\small{\rm slit}}$, and multi-valued with a period which is equal to $\mbox{\rm period}(g^{\ast})$, when extended to  ${\mathcal A}$. This means that 
\begin{equation}
\label{eq:periodinannulus}
h(z_1)=h(z_0)+\mbox{\rm period}(g^{\ast}), 
\end{equation}
whenever $z_1\in L(z_0)$ is obtained from $z_0\in\mbox\small{\rm slit}({\mathcal A})$ by traveling one full cycle along the $g$- level curve $L(z_0)$. Hence, the  function
\begin{equation}
\label{eq:welldefined}
\frac{2\pi}{\mbox{\rm period}(g^{\ast})}\big( g(v)   + i h(v) ),\ v\in {\mathcal A}\cap{\mathcal R}^{(0)}
\end{equation}
has period $2\pi i$ when defined on ${\mathcal A}$. Therefore,
\begin{equation}
\label{eq:welldefined1}
\exp\big(\frac{2\pi}{\mbox{\rm period}(g^{\ast})}\big( g(v)   + i h(v) )\big),\ v\in {\mathcal A}\cap{\mathcal R}^{(0)}
\end{equation}
is single-valued on  ${\mathcal A}$.

\medskip
We now turn to the construction of the tiling $T$.
\medskip

 The tiling $T$  of $S_{\mathcal A}$ is determined by all the intersections
of the family of concentric circles, ${\mathcal C}$, defined by 
\begin{equation}
\label{eq:circcurves}
r_i =\exp\big(\frac{2\pi}{\mbox{\rm period}(g^{\ast})} \,  g(v_i)\big),\ \mbox{\rm for}\   i=0,\ldots,k, 
\end{equation}
with the family of radial lines $\Gamma$, defined by
\begin{equation}
\label{eq:radialcurves}
\phi_j= \frac{2\pi}{\mbox{\rm period}(g^{\ast})}\, h(v_j),\ \mbox{\rm for}\  j=0,\ldots,p,
\end{equation}
where each annular shell in the tiling is uniquely defined by four vertices that lie on two consecutive members of the families above.
 
\medskip

Let $f_{\mathcal R}$ be a homeomorphism  which maps the quadrilateral $R\in{\mathcal R}$ determined by the counterclockwise oriented ordered set of vertices 
\begin{equation}
\label{eq:vertices}
\{(i,j), (i+1,j),(i+1,j+1),(i,j+1)\} 
\end{equation}
for $i=0,\ldots,k-1$, and $j=0,\ldots,p-1$, 
onto the annular shell $T_{R}$ determined by the counterclockwise oriented ordered set of vertices
\begin{equation}
\label{eq:vetricesshell}
\{r_i \exp{(i\phi_j)} , r_{i+1}\exp{(i\phi_{j})}, r_{i+1}\exp{(i\phi_{j+1})} ,  r_i\exp{(i\phi_{j+1})}\}
\end{equation}
and that preserves the order of the vertices.

\begin{figure}[htbp]
\begin{center}
 \scalebox{.55}{ \input{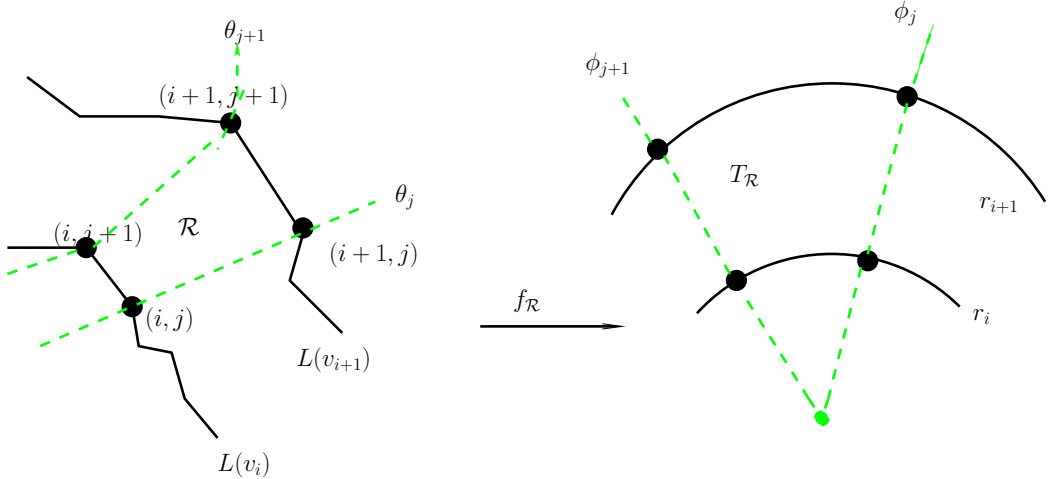}}
 \caption{Constructing one annular shell.}
\label{figure:quad4}
\end{center}
\end{figure}

\medskip
\medskip

%By definition, the restriction of $f$  to each quadrilateral is a homeomorphism. 
We will now show how to choose the  $f_R$'s so that the induced extension of $f$ is a homeomorphism on the whole of $\Omega$. Let $Q$ and $R$ be adjacent quadrilaterals in the layer between $E_2$ and $L(v_1)$. One can easily show that  by choosing $f_R$ to agree  with  $f_Q$ on $Q\cap R$, $f_Q\cup f_R$  is a homeomorphism on their intersection. Continuing counterclockwise in the fashion described above, it follows that 
 the full layer between  $E_2$ and $L(v_1)$ is mapped via a homeomorphism onto its image. 

Let  $\bar Q$ be the adjacent quadrilateral to $Q$ from above. Choose $f_{\bar Q}$  so that it agrees with $f_Q$ on $\bar Q \cap Q$.  Continue counterclockwise so that for each quadrilateral the 
chosen homeomorphism agrees with the choice of the previous homeomorphism (in this layer) on its right edge, and also agrees on its base, with the choice of the homeomorphism for the quadrilateral under it (on its top). 

For the last quadrilateral in this layer, a homeomorphism can be chosen to agree with the first chosen homeomorphism (in this layer) on its left side, with the homeomorphism chosen before it on its right side, and agrees (on its base) with the homeomorphism chosen for the quadrilateral lying under it. We continue this process, a layer by layer, until the domain is exhausted. 

Let 
\begin{equation}
\label{de:definitionoff}
f=\bigcup_{R\in {\mathcal R}^{(2)}} f_R.
\end{equation}

It is clear from (\ref{eq:vertices}) and (\ref{eq:vetricesshell}) that $f$ maps any edge in ${\mathcal R}^{(1)}\cap \partial\Omega$ homeomorphically onto an arc in $\partial S_{\mathcal A}$. Therefore, $f$ is indeed boundary preserving.

\medskip

It remains to prove that the map $f=\cup_R f_R$ which is  assembled from the individual maps as defined above is a homeomorphism {\it onto} $S_{\mathcal A}$.  To this end, first observe that by the maximum principle, the map $f$ is into $S_{\mathcal A}$. 

\medskip

By the definitions of $\nu,\mu$ and $T_R$, we have for all $R\in {\mathcal R}^{(2)}$ that
\begin{equation}
\label{eq:measure}
\nu(R)=\mu(T_{R}). 
\end{equation}

It is clear from the construction of ${\mathcal R}$ that any two quadrilaterals in ${\mathcal R}^{(2)}$ have disjoint interiors and that their intersection is either a single vertex or a common edge. Also recall that by definition each quadrilateral has its top and bottom edges situated on two successive level curves in ${\mathcal L}$, and its right and left edges situated on two successive level curves in ${\mathcal M}$.

\smallskip

Since the union of the quadrilaterals in ${\mathcal R}^{(2)}$  tile ${\mathcal A}$, the  total $\nu$-measure of their union, which we define to be $\nu({\mathcal A})$, satisfies the following 
 
\begin{equation}
\label{eq:totalmeasure}
\nu({\mathcal A}) \equiv  \nu(\cup_{R\in{\mathcal R}^{(2)}}R)=\sum_{R\in {\mathcal R}^{(2)} }\nu(R).
\end{equation}

\medskip

Starting from the quadrilaterals that lie between $L(v_0)$ and $L(v_1)$, we sum the $\nu$-measure of all the quadrilaterals in the layer defined in between successive level curves of $g$, until we reach $L(v_k)$.  By employing Definition~\ref{de:combdirichletmeasure} an easy computation shows that

\begin{equation}
\label{eq:totalcombmeasure}
\nu({\mathcal A})= \pi  \big(   \exp^{2}\big( \frac{2\pi}{ \mbox{\rm period}(g^{\ast})}\,k  \big) - 1           \big).
\end{equation}

\medskip

By the construction of the annular shells and the definition of the map $f$, each quadrilateral $R$ is mapped onto a unique annular shell $T_R$. No two different quadrilaterals are mapped onto the same annular shell, and the collection of their images tiles a subset of  $S_{\mathcal A}$. 

\medskip

Hence, by applying the above paragraph, (\ref{eq:totalmeasure}), (\ref{eq:measure}), (\ref{eq:totalcombmeasure}) and the definition of $S_{\mathcal A}$, we obtain that
\begin{equation}
\label{eq:areacompare}
\nu({\mathcal A})= \sum_{R\in {\mathcal R}^{(2)} }\mu(T_R)=\mu(\bigcup_{R\in {\mathcal R}^{(2)}}T_R)=    \mu(S_{{\mathcal A}}).
\end{equation}
\medskip

Hence, there are no gaps nor overlaps in the tiling of $S_{\mathcal A}$ and therefore $f$ is onto.

\smallskip
This concludes the proof of the theorem.

\eop{}

\medskip

\begin{Rem} The proof shows that each curve in  set ${\mathcal L}$ is  mapped homemorphically onto a  (unique) level curve in the family $u(r,\phi)=r_i$, and that each curve in the set
${\mathcal M}$ is mapped homeomorphically onto a (unique) level curve in the family $v(r,\phi)=\phi_j$. Also, the discussion following Definiton~\ref{de:period} guarantees that the dimensions of $S_{\mathcal A}$ are independent of the choice of the slit chosen.
\end{Rem}

\medskip

\subsubsection{\bf Relation of Theorem~\ref{th:annulus} to works by Schramm  and Cannon-Floyd-Parry.}
\label{se:CannonSchramm}

It is imperative to relate this theorem to Theorem 1.3 in \cite{Sch}, and Theorem 3.0.1 in \cite{CaFlPa}. While Schramm, and Cannon, Floyd, and Parry used discrete extremal lengths arguments in their proofs, their  arguments as well as  their results are different. Schramm's proof seems to work for a quadrilateral but not directly for an annulus.  The methods of Cannon, Floyd and Parry work for both a quadrilateral and an annulus. Furthermore, Schramm's input is a triangulation with a contact graph that will (more or less) be preserved. The input for Cannon, Floyd and Parry is more flexible. They consider a covering of a topological quadrilateral (annulus) by topological disks. We refer the reader to the papers above for details. Upon applying a Dirichlet-Neumann boundary value problem, our methods of the proof of Theorem~\ref{th:annulus} may be adapted to work for the quadrilateral case as well.

\smallskip

While our proof of Theorem~\ref{th:annulus} does not use the machinery of extremal length arguments, it is worth recalling that in the smooth category there are celebrated connections between boundary value problems and extremal length (see for instance \cite[Theorem 4.5]{Ah}).

\smallskip
The common theme of our methods and those of Cannon, Floyd and Parry in \cite{CaFlPa} is the construction of a new coordinate system on a topological annulus. As stated in the introduction, this powerful idea goes back to Riemann.

\subsection{The case of an annulus with one singular boundary component}
\label{se:singbound}
In this subsection, we will generalize Theorem~\ref{th:annulus} by providing a geometric model for an annulus with one singular boundary component. The singular  boundary component is of a special type.  It is determined by the topological structure of a singular level curve of the solution of a Dirichlet boundary value problem imposed on a planar embedded $m$-connected domain, where $m>1$. 

\medskip
We start with two definitions; the first one appeared in \cite[page 9]{Her1}.

\medskip

\begin{Def}
\label{de:bouquet}
A generalized  bouquet of circles will denote a union of bouquets of  piecewise-linear circles  where the intersection of any two circles is at most a vertex. 
Moreover, all such tangencies are required to be exterior, i.e., no circle is contained in the interior
of the bounded component of another. 
\end{Def}

Recall that Theorem 2.15, which was proved in \cite{Her1}, asserts the following.
\begin{Thm}[The topology of a level curve]
\label{th:notsimple}
Let $L$ be a level curve for $g$. Then each connected component of $L$ is a generalized bouquet of  circles. 
\end{Thm}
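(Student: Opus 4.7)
The plan is to analyze $L$ triangle by triangle and then around every vertex of ${\mathcal T}^{(0)}$ that sits on it, and finally patch the local pictures into a global topological description. Because $g$ is extended affinely on ${\mathcal T}^{(1)}$ and over ${\mathcal T}^{(2)}$, the intersection of $L$ with any closed $2$-cell is either empty, a single vertex, one straight segment joining two points of the triangle's boundary, or (in a degenerate case ruled out by the standing assumption on adjacent $g$-values unless the vertices lie on the same boundary component) a full edge. In particular every point of $L$ outside ${\mathcal T}^{(0)}$ is a regular point with exactly two local branches and a uniquely determined continuation across edges.

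Next I would study a vertex $v \in {\mathcal T}^{(0)}$ with $g(v) = t$ via its star. By the standing assumption no neighbor $y$ of $v$ satisfies $g(y) = t$, so cyclically ordered neighbors $y_1, \ldots, y_m$ split unambiguously into those with $g(y_i) > t$ and those with $g(y_i) < t$. The triangle $\Delta(v, y_i, y_{i+1})$ contributes a segment of $L$ emanating from $v$ exactly when the values $g(y_i)$ and $g(y_{i+1})$ straddle $t$, and the total number of such sign changes around $v$ is an even integer $2k$. If $v$ lies in the interior of $\Omega$, the vanishing of $\Delta g(v)$ forces at least one neighbor above $t$ and at least one below, whence $k \geq 1$. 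Pairing adjacent branches in angular order then presents $L$ near $v$ as $k$ arcs passing through $v$. Together with the previous paragraph this shows $L$ has no free endpoint in the interior of $\Omega$.

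Then I would assemble the global picture. Starting from any point $p$ of $L$ and following one of its two local branches, the continuation guaranteed above must eventually return to $p$, producing a piecewise-linear simple closed curve $C \subset L$. Any other circle in the same connected component of $L$ can meet $C$ only at vertices of ${\mathcal T}^{(0)}$, since regular points of $L$ admit a unique continuation. Hence each connected component of $L$ is a finite union of piecewise-linear circles meeting pairwise only at vertices, which is already most of the structure required by Definition~\ref{de:bouquet}.

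The main obstacle will be the exterior-tangency clause. Suppose toward contradiction that two circles $C_1, C_2$ of the component are tangent at a vertex $v$ with $C_1$ lying in the closed bounded region enclosed by $C_2$. Let $\Omega' \subset \Omega$ be the open planar region strictly between $C_1$ and $C_2$; its topological boundary sits inside $L$, so $g \equiv t$ on $\partial \Omega'$. Going around $v$, the sectors cut out by the local branches of $C_1 \cup C_2$ at $v$ are each nonempty in the star, so at least one edge $(v,y) \in {\mathcal T}^{(1)}$ lies in $\bar{\Omega'}$. If $y \in \partial \Omega'$ then $g(y) = t$, a direct contradiction with the standing assumption; and if $y$ lies in the interior of $\Omega'$, the discrete maximum and minimum principles applied to $g$ on the subnetwork supported in $\bar{\Omega'}$ (whose boundary values are all $t$) force $g(y) = t$ anyway, again contradicting the standing assumption. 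Hence no such tangency can exist, completing the proof.
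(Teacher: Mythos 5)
The paper itself does not reproduce a proof of Theorem~\ref{th:notsimple}; it is quoted from~\cite{Her1}, so there is no in-source argument to compare against directly. Judging your proposal on its own terms: the local triangle analysis, the even count $2k\geq 2$ of branches at an interior vertex obtained from $\Delta g(v)=0$, the absence of free endpoints, and the assembly of a component into finitely many piecewise-linear circles meeting pairwise only in vertices of ${\mathcal T}^{(0)}$ are all correct in outline.

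The exterior-tangency step, however, has a genuine gap. You assert that the open region $\Omega'$ strictly between $C_1$ and $C_2$ has ``its topological boundary inside $L$'' and then apply the maximum principle with ``boundary values all $t$.'' That assertion can fail once $m>2$: the lune $\Omega'$ may enclose one or more of the inner boundary components $E_2^j$, on which $g=0\neq t$. In that case $\partial\Omega'$ contains $E_2^j$, the boundary values on $\partial\Omega'$ are not all equal to $t$, the maximum principle yields only $g\leq t$ on $\Omega'$, and the forced equality $g(y)=t$ that produces your contradiction does not follow. What is missing is a sign-alternation ingredient. The angular sectors at $v$ cut out by the branches of $L$ alternate in the sign of $g-t$, because adjacent sectors are separated by a branch across which $g-t$ changes sign. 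The sector inside $C_1$ must carry $g<t$: either the disk bounded by $C_1$ contains an inner boundary component, whence $g\leq t$ there by the maximum principle and strictly $<t$ at any neighbor of $v$ by the standing assumption, or it contains none, whence $g\equiv t$ there, already a contradiction. Alternation then forces $g>t$ at the neighbors of $v$ lying in the adjacent ``between'' sectors, i.e.\ inside $\Omega'$. But the maximum principle on $\Omega'$ gives $g\leq t$ there \emph{whether or not} $\Omega'$ contains holes, since every boundary value of $\Omega'$ is either $t$ or $0$. That is the contradiction that survives the multiply-connected case, and it is what your argument needs in place of the claim that the boundary values of $\Omega'$ are identically $t$.
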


\medskip

It is convenient to present the singular boundary component as a quotient space. In the following definition, a circle will mean either a round circle or a piecewise linear circle.

\smallskip

We are now ready to make 

\begin{Def}
\label{de:circwithlabel}
An embedded planar circle with finitely many distinguished points on it will be called a {\it labeled} circle. If in addition, equivalence relations among these points are given, so that the quotient of the labeled circle is a generalized bouquet of circles, then we call the quotient a {\it labeled bouquet}, and the labeled circle will be called {\it good}. 
\end{Def}

\begin{Rem}
\label{re:smoothbouquet}
Note that if a labeled round bouquet, i.e., one which consists of only round circles, has more than two round circles tangent at one point, it will no longer embed in $\mathbb{R}^{2}$. 
\end{Rem}

We will now define the object of study in this subsection. By a {\it generalized singular annulus}, ${\mathcal A}_{\mbox{\small\rm sing}}$, we will mean a subset of the plane, whose interior is homeomorphic to the interior of an annulus, and whose boundary has two components: one of which is homeomorphic to $\mathbb S^{1}$ and the other is a generalized bouquet of circles. The subscript denotes the set of tangency points in the generalized bouquet of circles. Let us also assume that a cellular decomposition ${\mathcal T}$ of $({\mathcal A}_{\mbox{\small\rm sing}},\partial{\mathcal A}_{\mbox{\small\rm sing}})$ is provided, where each $2$-cell is either a triangle or a quadrilateral.

\medskip

Topologically, ${\mathcal A}_{\mbox{\small\rm sing}}$ may be presented as the quotient 
of a planar annulus ${\mathcal A}$, where $\partial {\mathcal  A}= E_1\cup E_2$, and $E_2$ is a good labeled circle (see Definition~\ref{de:circwithlabel}).  Henceforth, we will let $\pi$ denote the quotient map. We will let $\dot{E_2}$ denote the 
singular boundary component of $\partial{\mathcal A}_{\mbox{\small\rm sing}}$. 

\medskip

Note that the cellular decomposition ${\mathcal T}$ can be  lifted to a cellular decomposition $\tilde{\mathcal T}$ of $({\mathcal A},\partial{\mathcal A})$, where each $1$-cell,  $2$-cell in $\tilde{\mathcal T}$, respectively, is the unique pre-image, under $\pi^{-1}$, of a unique $1$-cell, $2$-cell  in ${\mathcal T}$, respectively.  The difference between the two cellular decompositions manifests in the addition (in comparision to $\dot{E_2}$) of vertices in $E_2$. Specifically, for each vertex $v$ in the singular part of $\partial{\mathcal A}_{\mbox{\small\rm sing}}$, there are $m(v)$ vertices in $E_2$, where $m(v)$ is the number of circles that are tangent at $v$.

\medskip

We will now apply Theorem~\ref{th:pair is orth} and Theorem~\ref{th:annulus} to $({\mathcal A},\partial {\mathcal A}, \tilde{\mathcal T})$.  In the following proposition, recall that the existence of ${\mathcal R}$ is provided  by Theorem~\ref{th:pair is orth}, and that $h$ is the conjugate harmonic function to $g$, the solution of the imposed discrete Dirichlet boundary value problem on $({\mathcal A},\partial {\mathcal A}, \tilde{\mathcal T})$ (see Definition~\ref{de:dbvp}).

\smallskip

With the above notation and setting in place, and with ${\mathcal L}$ denoting the set of level curves of $g$ as in Equation~(\ref{eq:levelsets}), we may now state the main proposition of this subsection.

\begin{Pro}[An annulus with a singular boundary]
\label{th:annulussing} Let $({\mathcal A}_{\mbox{\small\rm sing}}, \partial{\mathcal A}_{\mbox{\small\rm sing}}=E_1\cup\dot{E_2})$ be a generalized singular annulus endowed with a cellular decomposition ${\mathcal T}$. Let $k$ be a positive constant, and let $g$ be the solution of the discrete Dirichlet boundary value problem defined on $({\mathcal A}, \partial {\mathcal A}, \tilde{\mathcal T})$. 

Let $S_{\mathcal A}$ be the concentric Euclidean annulus with its inner and outer radii satisfying 
\begin{equation}
\label{eq:dimannsing}
\{r_1,r_2\}=  \{1, 2\pi\,\mbox{\rm Length}(L(v_k))\}=\{1, 2\pi \exp\big( \frac{2\pi}{\mbox{\rm period}(g^{\ast})}\,     k\big)\}.
\end{equation}

Then there exist 

\begin{enumerate} 

\item a tiling $T$ of $S_{\mathcal A}$ by annular shells,
\item a set denoted by $\mbox{\rm sing}(S_{\mathcal A})$ consisting of finitely many points which is contained in the inner boundary of $S_{\mathcal A}$,
\item a homeomorphism $$f:({\mathcal A},\partial{\mathcal A} \setminus \pi^{-1}(\mbox{\rm sing(}{\mathcal A})) ,{\mathcal R})\rightarrow (S_{\mathcal A},\partial S_{\mathcal A}\setminus \mbox{\rm sing}(S_{\mathcal A}),T)
$$
such that 
 $f$ maps the interior of each quadrilateral in ${\mathcal R}^{(2)}$ onto the interior of a single annular shell in $S_{\mathcal A}$, $f$ preserves the measure of each quadrilateral, i.e., $$\nu(R)=\mu(f(R)),\mbox{\rm for all}\  R\in {\mathcal R}^{(2)},$$ and $f$ is boundary preserving.
\end{enumerate}
\end{Pro}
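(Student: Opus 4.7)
The plan is to observe that the proposition is essentially a direct corollary of Theorem~\ref{th:annulus}, once the singular set on the target annulus is correctly identified. Since the hypothesis already imposes the Dirichlet boundary value problem on the regular lifted annulus $(\mathcal{A},\partial\mathcal{A},\tilde{\mathcal{T}})$, I would invoke Theorem~\ref{th:pair is orth} to produce the combinatorial orthogonal filling pair $\{g,h\}$ and the rectangular net $\mathcal{R}$, and then invoke Theorem~\ref{th:annulus} directly. This gives a concentric Euclidean annulus $S_{\mathcal A}$ whose inner and outer radii match exactly those required by Equation~(\ref{eq:dimannsing}), a tiling $T$ of $S_{\mathcal A}$ by annular shells, and a boundary-preserving, measure-preserving homeomorphism $f:(\mathcal{A},\partial\mathcal{A},\mathcal{R})\rightarrow(S_{\mathcal A},\partial S_{\mathcal A},T)$.

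Next, I would define
$$\mbox{\rm sing}(S_{\mathcal A}) := f\big(\pi^{-1}(\mbox{\rm sing}(\mathcal{A}_{\mbox{\small\rm sing}}))\big).$$
Because $\mbox{\rm sing}(\mathcal{A}_{\mbox{\small\rm sing}})$ is the finite set of tangency vertices of the generalized bouquet $\dot{E_2}$, and each such vertex $v$ has exactly $m(v)<\infty$ preimages under $\pi$, all lying on $E_2$, the set $\mbox{\rm sing}(S_{\mathcal A})$ is finite. Since $f$ is boundary preserving and the boundary component $E_2$---on which $g\equiv 0$---is sent to the inner circle $r=1$ of $S_{\mathcal A}$ (corresponding in Equation~(\ref{eq:circcurves}) to the value $g=0$), the set $\mbox{\rm sing}(S_{\mathcal A})$ lies on the inner boundary of $S_{\mathcal A}$, as required by item (2). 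The restriction of $f$ to $\mathcal{A}$ with the finite set $\pi^{-1}(\mbox{\rm sing}(\mathcal{A}_{\mbox{\small\rm sing}}))$ deleted from the boundary remains a homeomorphism onto $S_{\mathcal A}\setminus\mbox{\rm sing}(S_{\mathcal A})$, since removing finitely many points from the boundary does not destroy the homeomorphism property. Measure preservation on the interiors of quadrilaterals and the boundary-preserving property are inherited from the corresponding global properties of $f$ supplied by Theorem~\ref{th:annulus}.

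The main obstacle, which is rather mild, is topological bookkeeping: one must verify that $\pi^{-1}(\mbox{\rm sing}(\mathcal{A}_{\mbox{\small\rm sing}}))\subset E_2$ and that $f(E_2)$ is precisely the inner circle of $S_{\mathcal A}$ under the sign convention $g|_{E_2}=0 < k = g|_{E_1}$. Both facts follow from the construction of $\tilde{\mathcal{T}}$ as the lift that unfolds each tangency vertex into $m(v)$ distinct vertices of $E_2$, making $\tilde{\mathcal{T}}$ a genuine cellular decomposition of a regular planar annulus on which Theorem~\ref{th:annulus} applies verbatim, together with the formula for the radii in Equation~(\ref{eq:dimannsing}). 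No modification of the earlier machinery is needed; the content of the proposition is a reinterpretation of the conclusion of Theorem~\ref{th:annulus} through the quotient map $\pi$.
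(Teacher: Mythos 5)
Your proposal is correct and follows essentially the same route as the paper: apply Theorem~\ref{th:pair is orth} and Theorem~\ref{th:annulus} to the lifted annulus $({\mathcal A},\partial{\mathcal A},\tilde{\mathcal T})$, identify $\mbox{\rm sing}(S_{\mathcal A})$ as the image under $f$ of the finitely many preimages of the tangency vertices, observe that these land on the inner circle because $g|_{E_2}=0$, and note that removing finitely many boundary points does not spoil the homeomorphism on the interiors of the quadrilaterals. The only cosmetic difference is that the paper locates $\mbox{\rm sing}(S_{\mathcal A})$ by intersecting $f(E_2)$ with the images of the level curves of $h$ terminating at the preimage vertices, whereas you define it directly as $f(\pi^{-1}(\mbox{\rm sing}({\mathcal A}_{\mbox{\small\rm sing}})))$; the two descriptions coincide.
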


\begin{proof}
The proof is a straightforward modification of the non-singular boundary case. Let ${\mathcal R}$ be the rectangular net constructed in Theorem~\ref{th:pair is orth}.  Let $h$ be the conjugate harmonic function constructed on $({\mathcal A},\partial{\mathcal A},\tilde{\mathcal T})$,  let $f$ be the homeomorphism constructed in Theorem~\ref{th:annulus}, and  let $T$ be the tiling of $S_{\mathcal A}$ provided by Theorem~\ref{th:annulus}.

\smallskip

For each $t_i\in \mbox{\rm sing}({\mathcal A})\subset \dot{E_2}$, $i=1,\ldots p$, there are precisely $m(t_i)$ vertices on $E_2$ in the equivalence class corresponding to $t_i$. Let 
\begin{equation}
\label{eq:singularleveloftheta}
{\mathcal V}(t_i)=\{L(h)_{t_{i,1}},\ldots,L(h)_{t_{i,m(t_i)}}\},\  i=1,\ldots p
\end{equation}
be the level curves of $h$ that have one of their endpoints at one of these vertices. With this notation, and since the level curves of $h$ are ``parallel", it follows that
\begin{equation}
\label{eq:totalsing}
{\mathcal V}_{\mbox{\rm sing}({\mathcal A})}= \bigcup_{i=1}^{p} {\mathcal V}(t_i)
\end{equation}
comprises of all the level curves of $h$ that have an endpoint in the pre-image of $\mbox{\rm sing}({\mathcal A})$.

\smallskip

Set 
\begin{equation}
\label{eq:singincylinder}
\mbox{\rm sing}(S_{\mathcal A})=f(E_2)\bigcap f(\bigcup_{i=1}^{p} {\mathcal V}(t_i) ),
\end{equation}
then $\mbox{\rm sing}(S_{\mathcal A})$ is the image under $f$  of all the vertices in the pre-image of 
$\mbox{\rm sing}({\mathcal A})$. Furthermore, recall that $f( {\mathcal V}_{\mbox{\rm sing}(S_{\mathcal A})})$ is a set of radial arcs in $S_{\mathcal A}$.% which are parts of ${\mathcal R}$.

\smallskip
To finish proving the statement in $(3)$, note that any quadrilateral in ${\mathcal R}^{(2)}$ whose vertices are disjoint from  $\mbox{\rm sing}(S_{\mathcal A})$ is mapped homemorphically onto a shell in $S_{\mathcal A}$. Since by construction the image of  $\pi^{-1}(\mbox{\rm sing(}{\mathcal A}))$ is precisely $\mbox{\rm sing}(S_{\mathcal A})$,  it follows that $f$ will map the interior of each one of the rest of the quadrilaterals homemorphically onto the appropriate shell, with punctures at the corresponding vertices. This ends the proof of the proposition. 
\end{proof}

\smallskip

A geometric model to $({\mathcal A}_{\mbox{\rm sing}},\partial{\mathcal A}_{\mbox{\rm sing}}, {\mathcal T})$ is now easy to provide since 
 the first part of $(3)$ in the  proposition above allows us to label the vertices in $\mbox{\rm sing}(S_{\mathcal A})$ isomorphically to the labeling of the vertices in $\mbox{\rm sing}({\mathcal A})$.  We will keep denoting by $\pi$ the quotient map which is thereafter induced on $S_{\mathcal A}$. Such a quotient annulus will be called a {\it generalized Euclidean annulus} and will be denoted by $C_{\mathcal A}$.
The proof of the following corollary is straightforward.

\begin{Cor}
\label{co:to the quotient}
With the assumptions of Proposition~\ref{th:annulussing}, and with $C_{\mathcal A}=S_{\mathcal A}/\pi$, there exist

\begin{enumerate} 

\item a tiling $T$ of $C_{\mathcal A}$ by annular shells, and
%\item a set denoted by $\mbox{\rm sing}(S_{\mathcal A})$ consisting of finitely many points which is contained in the inner boundary of $S_{\mathcal A}$,
\item a homeomorphism $$f:({\mathcal A}_{\mbox{\rm sing}},\partial{\mathcal A}_{\mbox{\rm sing}} ,{\mathcal R})\rightarrow (C_{\mathcal A},\partial C_{\mathcal A},T),
$$
such that 
$f(\mbox{\rm sing}({\mathcal A}))=\mbox{\rm sing}(S_{\mathcal A})/\pi$,  $f$ maps the interior of each quadrilateral in ${\mathcal R}^{(2)}$ onto the interior of a single annular shell in $C_{\mathcal A}$, $f$ preserves the measure of each quadrilateral, i.e., $$\nu(R)=\mu(f(R)),\mbox{\rm for all}\  R\in {\mathcal R}^{(2)},$$ and $f$ is boundary preserving.
\end{enumerate}
\end{Cor}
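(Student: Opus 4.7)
The plan is to descend the homeomorphism supplied by Proposition~\ref{th:annulussing} to the appropriate quotient spaces. By construction, ${\mathcal A}_{\mbox{\rm sing}}={\mathcal A}/\pi$ for the labeled quotient $\pi$ acting on $E_2$, and the corollary defines $C_{\mathcal A}=S_{\mathcal A}/\pi$ using the labeling transported to $\mbox{\rm sing}(S_{\mathcal A})$ by the bijection between $\pi^{-1}(\mbox{\rm sing}({\mathcal A}))$ and $\mbox{\rm sing}(S_{\mathcal A})$ coming from Proposition~\ref{th:annulussing}. These two equivalence relations are therefore compatible under $f$: vertices of $\pi^{-1}(\mbox{\rm sing}({\mathcal A}))$ collapsing to the same point of ${\mathcal A}_{\mbox{\rm sing}}$ correspond under $f$ to points of $\mbox{\rm sing}(S_{\mathcal A})$ collapsing to the same point of $C_{\mathcal A}$.

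First I would define $\bar f([x])=[f(x)]$ and verify well-definedness using this compatibility. The universal property of the quotient topology then makes $\bar f$ continuous, and the same construction applied to $f^{-1}$ (which is available outside the singular set by Proposition~\ref{th:annulussing} and is extended continuously across the identified points using the labeling) supplies a continuous inverse. Hence $\bar f$ is a homeomorphism from $({\mathcal A}_{\mbox{\rm sing}},\partial{\mathcal A}_{\mbox{\rm sing}})$ onto $(C_{\mathcal A},\partial C_{\mathcal A})$ that sends $\mbox{\rm sing}({\mathcal A})$ bijectively onto $\mbox{\rm sing}(S_{\mathcal A})/\pi$.

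Next I would push the tiling $T$ of $S_{\mathcal A}$ down via the quotient to get a tiling of $C_{\mathcal A}$ by annular shells, still denoted $T$. Each quadrilateral $R\in{\mathcal R}^{(2)}$ has interior disjoint from $\pi^{-1}(\mbox{\rm sing}({\mathcal A}))$, so the quotient acts trivially on $\mbox{\rm int}(R)$ and on $\mbox{\rm int}(f(R))$, and Proposition~\ref{th:annulussing} immediately yields that $\bar f$ carries $\mbox{\rm int}(R)$ homeomorphically onto the interior of one annular shell. The measure identity $\nu(R)=\mu(\bar f(R))$ is inherited from the corresponding identity in Proposition~\ref{th:annulussing}, since $\mbox{\rm sing}(S_{\mathcal A})/\pi$ is a finite set of $\mu$-measure zero. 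Boundary preservation transfers verbatim, because $\pi$ only identifies points of $\partial{\mathcal A}$ among themselves and $f$ is already boundary preserving off the singular set.

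The only delicate point, and thus what I would expect to be the main (though minor) obstacle, is verifying that the labeling of $\mbox{\rm sing}(S_{\mathcal A})$ induced from $\mbox{\rm sing}({\mathcal A})$ through $f$ truly makes $f$ descend to a bicontinuous map; but this amounts to unpacking the labeling convention fixed immediately before the statement of the corollary, so no genuine difficulty arises.
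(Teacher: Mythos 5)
Your proposal is correct and is essentially the argument the paper has in mind; the paper merely declares the corollary's proof ``straightforward'' after observing that the labeling on $\mbox{\rm sing}(S_{\mathcal A})$ is transported from $\mbox{\rm sing}({\mathcal A})$ through $f$, which is exactly the compatibility of equivalence relations you identify as the crux. Your descent of $f$ to $\bar f$ via the universal property of the quotient, the push-forward of the tiling, the measure-zero observation for the singular set, and the transfer of boundary preservation all fill in that ``straightforward'' claim faithfully, with no gaps.
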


\smallskip

\begin{figure}[htbp]
\begin{center}
 \scalebox{.55}{ \input{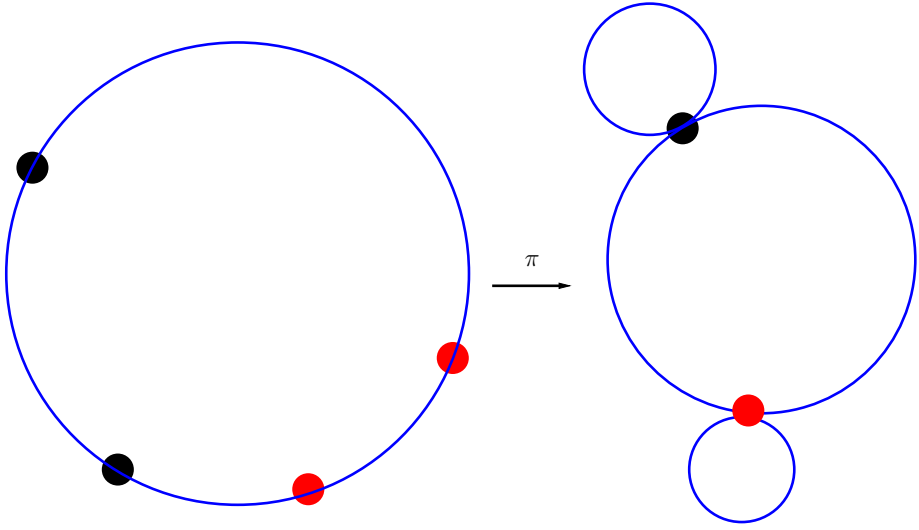}}
 \caption{An example of the map $\pi$.}
\label{figure:quad2}
\end{center}
\end{figure}

\bigskip

In the next section, we will work with a general $m$-connected planar domain ($m>2$) that will
be cut along singular level curves of a Dirichlet boundary value problem imposed on it.  In order to allow gluing along components of a singular level curve, we will utilize Euclidean cylinders and Euclidean cylinders with one singular boundary component. To this end, recall that a conformal homeomorphism, from a concentric annulus to a Euclidean cylinder of radius equal to $1$, and height equal to $\log{(b/a)}$, is defined by 

\begin{equation}
\label{eq:annulustocylinder}
F(\rho\cos{(h)},\rho\sin{(h)})=(\cos{(h)},\sin{(h)}, \log{(\rho)}),\ a\leq\rho\leq b, \ 0<h\leq 2 \pi, 
\end{equation}
where $(\rho,h)$ denote polar coordinates in the plane.

It easily follows that the image of an annular shell under the map $F$ is a Euclidean rectangle. We will abuse notation and will keep the same notation for ${\mathcal S}_{\mathcal A}$ and its image under the mapping $F$. 
\smallskip

We now define a variation of the measure $\nu$ (see Equation~(\ref{eq:dfm})) in order to adjust our statements to working with such cylinders.

\begin{Def} 
\label{de:measoncyl} For any $R\in {\mathcal R}^{(2)}$,  let $R_{\mbox\small{\rm top}}, R_{\mbox\small{\rm base}}$ be the top and base boundaries of $R$, respectively.  Let 
$t\in R_{\mbox\small{\rm top}}^{(0)}$ and $b\in  R_{\mbox\small{\rm base}}^{(0)}$ be any two vertices. Then let  
\begin{equation}
\label{eq:measoncylinder}
\lambda(R)= \frac{2 \pi\, dh(R_{ \mbox\small{\rm base} })}{\mbox{\rm period}(g^{\ast})}
\log{\frac{r_{t}}{r_{b}}},
\end{equation}
where 
\begin{equation}
\label{eq:circcurves1}
r_t =\exp\big(\frac{2\pi}{\mbox{\rm period}(g^{\ast})} \,  g(t)\big)\ \mbox{\rm and}\   r_b =\exp\big(\frac{2\pi}{\mbox{\rm period}(g^{\ast})} \,  g(b)\big), 
\end{equation}
following Equation~(\ref{eq:circcurves})
\end{Def}

\medskip
By applying the map $F$ and the measure $\lambda$, we 
 may state  
Theorem~\ref{th:annulus}, Proposition~\ref{th:annulussing} and Corollary~\ref{co:to the quotient} in the language of Euclidean cylinders. We end this subsection by summarizing this in the following remark which will be applied in the next section.

\medskip

\begin{Rem}
\label{re:to a cylinder}
Under the assumptions of Theorem~\ref{th:annulus}, Proposition~\ref{th:annulussing}, and Corollary~\ref{co:to the quotient}, all the assertions hold if  one replaces $S_{\mathcal A}$,   generalized  $S_{\mathcal A}$,  by a Euclidean cylinder, generalized Euclidean cylinder, respectively; $f$ by $\pi\circ F\circ f$ and an annular shell by its image under $F$, $F\circ \pi$, respectively, and the measure $\nu$ by the measure $\lambda$. 
\end{Rem}

\section{planar domains of higher connectivity} 
\label{se:highconn} 

In this section, we prove the second main theorem of this paper. We generalize Theorem~\ref{th:annulus} to the case of bounded planar domains of higher connectivity.
Let us start by recalling an important property of the level curves of the solution of the discrete Dirichlet boundary value problem (see Definition~\ref{de:dbvp}). This property will be essential in the proof of Theorem~\ref{th:DBVP}.  In the course of the proof, we will need to know that there is a singular level curve which encloses all of the interior components of $\partial\Omega$, where $\Omega$ is the given domain. This unique level curve is the one along which we will cut the domain. We will keep splitting along a  sequence of these singular level curves in subdomains of smaller connectivity until the remaining pieces are annuli or generalized singular annuli.  Once this is achieved, we will provide a gluing scheme in order to fit the pieces together in a geometric way. 

\medskip

Before stating the second main theorem of this paper, we need to recall a definition and a proposition. Consider $f:V \rightarrow \mathbb{R}\cup \{0\}$ such that any two adjacent vertices are given different values. Let  $\{w_1,w_2,\ldots,w_k\}$ be the adjacent vertices to $v\in V$.
Following \cite{Ba} and \cite[Section 3]{LaVe}, consider the number of sign changes in the sequence $\{ f(w_1)-f(v),f(w_2)-f(v),\ldots,f(w_k)-f(v),f(w_1)-f(v)\}$, which is denoted by  $\mbox{\rm Sgc}_{f}(v)$. The index of $v$ is then defined by 
\begin{equation}
\label{eq:index}
\mbox{\rm Ind}_{f}(v)= 1- \frac{\mbox{\rm Sgc}_{f}(v)}{2}.
\end{equation}

\begin{Def}
A vertex whose index is different from zero will be called  singular; otherwise the vertex is regular. A level set which contains at least one singular vertex will be called singular; otherwise the level set will be called regular.
\end{Def} 

The following proposition first appeared (as Proposition 2.28) in \cite{Her1}.

\begin{Pro}
\label{pr:onethatenclose}
There exists a unique singular level curve which contains, in the  interior of the domain it bounds, all of the inner boundary components of $\partial\Omega$. 
\end{Pro}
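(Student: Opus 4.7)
The plan is to identify the desired singular level curve as the one realizing the smallest value $t^{*}$ for which $\{g=t\}$ still has a component enclosing all inner boundaries, and to deduce uniqueness from a monotone refinement property of the level curves as $t$ decreases.

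First I would fix the behavior at both extremes. By the discrete maximum principle applied near $E_{1}$, for $t$ sufficiently close to $k$ the level set $\{g=t\}$ consists of a single simple closed curve isotopic to $E_{1}$ whose bounded complement contains all inner boundaries $E_{2},\ldots,E_{m}$. Applied instead in small neighborhoods of each $E_{j}$ ($j\ge 2$), the same principle yields, for $t$ sufficiently close to $0$, at least $m-1$ components of $\{g=t\}$, each confined to a neighborhood of a single $E_{j}$, so no component can enclose all of them. Set
\[
t^{*}=\inf\bigl\{\,t\in(0,k)\,:\,\{g=t\}\text{ has a component whose bounded complement contains }E_{2}\cup\cdots\cup E_{m}\bigr\}.
\]
The two observations give $0<t^{*}<k$. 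Since $g$ on ${\mathcal T}^{(0)}$ takes only finitely many values, the critical values of $g$ (the values at singular vertices) form a finite set, and between consecutive critical values the topology of $\{g=t\}$ is combinatorially constant. If $t^{*}$ were not a critical value, the topological type of $\{g=t\}$ would be constant on an open interval containing $t^{*}$ and an enclosing component would persist for $t$ slightly below $t^{*}$, contradicting the infimum. Hence $t^{*}$ is a critical value, $\{g=t^{*}\}$ is a singular level set, and a limit of the enclosing components as $t\searrow t^{*}$ produces a connected component $C^{*}$ of $\{g=t^{*}\}$ whose bounded complement contains all inner boundaries. Moreover $C^{*}$ itself must be singular: otherwise it would be a regular simple closed curve admitting a tubular neighborhood in which nearby level curves are isotopic simple closed curves with the same enclosure property, again contradicting the infimum.

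For uniqueness the essential tool is a monotone refinement of the enclosure partition. To each regular $t\in(0,k)$ associate the partition of $\{E_{2},\ldots,E_{m}\}$ whose blocks record which component of $\{g=t\}$ has each inner boundary in its bounded complement. If $C$ is such a component with block $I_{C}$ and bounded planar region $R_{C}$, the boundary of $R_{C}\cap\Omega$ consists of $C$ (where $g=t$) and the elements of $I_{C}$ (where $g=0$); the maximum principle gives $g\le t$ on $R_{C}\cap\Omega$, so any component of $\{g=t'\}$ with $t'<t$ that meets $R_{C}$ is entirely contained in $R_{C}$ and can enclose only elements of $I_{C}$. Hence the enclosure partition can only refine as $t$ decreases. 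Applying this at $t=t^{*}$: by Theorem~\ref{th:notsimple}, $C^{*}$ is a generalized bouquet of $p\ge 2$ circles, and the inner boundaries must split strictly across at least two of the bounded complementary pockets of $C^{*}$; for if all of $E_{2},\ldots,E_{m}$ lay inside the bounded component determined by a single loop $L$ of $C^{*}$, then for $t$ slightly less than $t^{*}$ the component of $\{g=t\}$ close to $L$ would be a simple closed curve still enclosing all inner boundaries, contradicting $t^{*}=\inf$. Hence for every $t<t^{*}$ the enclosure partition is strictly finer than the trivial one, and no component of $\{g=t\}$ encloses all inner boundaries. Combined with the choice of $t^{*}$, this gives uniqueness of the singular level curve with the desired property.

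The main obstacle I anticipate is precisely this strict refinement step at $t^{*}$: ruling out the scenario in which all inner boundaries remain inside a single loop of $C^{*}$. The argument above handles it via the extremality of $t^{*}$, but a rigorous implementation depends on the standing assumption that no pair of adjacent interior vertices share a $g$-value, which guarantees that the combinatorial splitting of the level set at each singular vertex of $C^{*}$ is unambiguous so that the components of $\{g=t\}$ for $t$ just below $t^{*}$ are genuinely distinct simple closed curves lying close to the individual loops of $C^{*}$.
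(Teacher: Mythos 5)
Your strategy --- define $t^{*}$ as the infimum of levels $t$ for which $\{g=t\}$ has a component enclosing all inner boundaries, show $t^{*}$ is a critical value realized by a singular component $C^{*}$, and derive uniqueness from a monotone refinement of the enclosure data as $t$ decreases --- is sound, and the existence half works essentially as written. (The paper itself does not re-prove this proposition; it cites Proposition 2.28 of \cite{Her1}, so a direct comparison with the original argument is not possible here.)

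The uniqueness step, however, has a genuine gap. You rule out a singular enclosing curve at every $t<t^{*}$ and produce one at $t=t^{*}$, but the closing line ``Combined with the choice of $t^{*}$, this gives uniqueness'' never addresses the case $t_{1}>t^{*}$: a priori the enclosing component of $\{g=t_{1}\}$ could itself be a generalized bouquet of circles rather than a simple closed curve, which would be a second singular level curve with the stated enclosure property. The fix is to run your trapping argument once more at such a $t_{1}$. If $C_{1}\subset\{g=t_{1}\}$ were a singular bouquet whose loop interiors jointly contain all the $E_{j}$, then the region between $C_{1}$ and $E_{1}$ is a hole-free annulus, so the maximum principle gives $g\ge t_{1}$ there; hence $\{g<t_{1}\}\cap\Omega$ lies in the disjoint union of the loop disks of $C_{1}$. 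Every component of $\{g=t\}$ with $t\in(t^{*},t_{1})$ is then confined to a single loop disk and cannot enclose all inner boundaries, contradicting $t^{*}=\inf$ once one notes (via your refinement lemma) that the set of enclosing levels is an up-set. Two smaller points worth tightening: the enclosure partition should assign each $E_{j}$ to its \emph{innermost} enclosing component of $\{g=t\}$ --- well defined because two components of the same level set enclosing $E_{j}$ must be nested, with a hole between them lest $g$ be constant on an annulus --- and the Hausdorff-limit step is most cleanly phrased combinatorially, using that the level-set type is constant between consecutive values of the finite set $g({\mathcal T}^{(0)})$.
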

Such a curve will be called the {\it maximal} singular level curve with respect to $\Omega$.
Recall that the notion of an interior of such a domain was discussed in Subsection~\ref{pa:simple}. 

\smallskip

Throughout this paper, we will not distinguish between a Euclidean rectangle and its image under an isometry. Recall (see the end of Subsection~\ref{se:gmodels}) that a singular flat, genus zero compact surface with $m>2$ boundary components with conical singularities is called  a ladder of singular pairs of pants.

\smallskip
We now  prove the second main theorem of this paper.  
\begin{Thm}[A Dirichlet model for an $m$-connected domain]
\label{th:DBVP}
Let  $(\Omega,\bord\Omega=E_1\sqcup E_2,{\mathcal T})$ be a bounded, $m$-connected, planar domain with $E_2=E_2^1\sqcup E_2^2\ldots \sqcup E_2^{m-1}$. Let $g$ be the solution of the discrete Dirichlet boundary value problem defined on $(\Omega, \partial {\Omega}, {\mathcal T})$. Then there exists

\begin{enumerate}
   \item a finite decomposition with disjoint interiors of $\Omega$,  ${\mathcal A}=\cup{_i}{\mathcal A}_{i}$, where for all $i$, ${\mathcal A}_{i}$ is either an annulus or an annulus with one singular boundary component;
  
  \smallskip
   
\item for all $i$, a finite decomposition with disjoint interiors ${\mathcal R_{A_i}}$, of ${\mathcal A}_{i}$,  where each $2$-cell is a simple quadrilateral;

\smallskip

\item for all $i$, a finite measure $\lambda_i$ defined on ${\mathcal R_{A_i}}$; and 

\smallskip

\item a ladder of singular pairs of pants  $S_{\Omega}$ with $m$ boundary components, such that

\medskip

\begin{itemize}
\item[(a)] the lengths of the $m$ boundary components of $S_\Omega$ are determined by the Dirichlet data,
\item[\rm(b)] there exists a finite decomposition with disjoint interiors of $S_{\Omega}=\cup_{i} C_{{\mathcal A}_{i}}$, where 
each $C_{{\mathcal A}_{i}}$ is either a Euclidean cylinder or a  generalized Euclidean cylinder, equipped with 
 a tiling $T_i$ by Euclidean rectangles where each one of these is  endowed with Lebesgue measure;  and 
\item[\rm(c)] a homeomorphism
$$f:(\Omega,\bord\Omega,\cup{_i}{\mathcal A}_{i})\rightarrow (S_{\Omega},\partial \Omega,\cup_{i}{{\mathcal R_A}}_{i}),
$$
such that 
$f$ maps each ${\mathcal A}_{i}$ homeomorphically onto a corresponding $C_{{\mathcal A}_{i}}$,  and each quadrilateral in ${\mathcal R_A}_{i}$ onto a rectangle in 
$C_{{\mathcal A}_{i}}$ while preserving its measure. Furthermore, $f$ is boundary preserving (as explained in Theorem~\ref{th:annulus}).
\end{itemize} 

\end{enumerate}
\end{Thm}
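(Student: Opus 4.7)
The plan is to induct on the connectivity $m$, using Proposition~\ref{pr:onethatenclose} to peel off outer pieces one at a time. Let $L_{\max}$ denote the maximal singular level curve of $g$ with respect to $\Omega$, the unique singular level curve enclosing every inner boundary component. Cutting $\Omega$ along $L_{\max}$ produces an outer generalized singular annulus $\mathcal{A}_{\mathrm{out}}$ with outer boundary $E_1$ and inner boundary $L_{\max}$ (a generalized bouquet of circles by Theorem~\ref{th:notsimple}), together with a finite collection of interior subdomains $\Omega_1,\ldots,\Omega_s$, one per lobe of $L_{\max}$, each of strictly smaller connectivity. On each $\Omega_j$, the restriction of $g$ solves the Dirichlet boundary value problem with $g\equiv g(L_{\max})$ on the outer boundary and the original $E_2$-values on the inner components contained in $\Omega_j$. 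Iterating this cut-and-recurse procedure terminates in finitely many steps, producing the topological decomposition $\Omega=\bigcup_i \mathcal{A}_i$ required in~(1), in which each $\mathcal{A}_i$ is either an annulus or a generalized singular annulus.

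On each $\mathcal{A}_i$, I would invoke Theorem~\ref{th:pair is orth} (after cutting $\mathcal{A}_i$ open along an appropriate slit) to obtain the rectangular net $\mathcal{R}_{\mathcal{A}_i}$ associated with the combinatorial orthogonal filling pair $\{g,h_i\}$, where $h_i$ is the harmonic conjugate of $g$ on $\mathcal{A}_i$; this supplies~(2), and equipping $\mathcal{R}_{\mathcal{A}_i}^{(2)}$ with the measure $\lambda_i$ of Definition~\ref{de:measoncyl} supplies~(3). Applying Theorem~\ref{th:annulus} in the non-singular case, or Proposition~\ref{th:annulussing} together with Corollary~\ref{co:to the quotient} in the singular case, and then passing through the map $F$ of Equation~(\ref{eq:annulustocylinder}) as in Remark~\ref{re:to a cylinder}, yields for each $i$ a boundary- and measure-preserving homeomorphism $f_i\colon \mathcal{A}_i\to C_{\mathcal{A}_i}$ onto a Euclidean cylinder or generalized Euclidean cylinder tiled by Euclidean rectangles. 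The tangency vertices of the singular level curves along which the pieces are cut become the conical-singularity candidates of $S_\Omega$, with cone angle a positive integer multiple of $\pi/2$ determined by the local combinatorial type of the bouquet.

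The main obstacle is the gluing step required to produce the single surface $S_\Omega$ demanded by~(4): the cylinders $C_{\mathcal{A}_i}$ must be identified along their shared cuts by Euclidean isometries. Every interior cut curve $L$ (a connected component of some singular level curve used in the recursion) is a boundary of exactly two pieces, and for a component-wise isometric gluing to exist, the circumference of each boundary circle of one cylinder must match that of the corresponding circle in the other. By Definition~\ref{de:pairflux} and Corollary~\ref{co:lengthlevelg} (applied component by component to the singular $L$), these circumferences are pair-flux lengths, depending on $g(L)$, $|dh_i|$ along $L$, and the local normalization $\mathrm{period}(g^{\ast})$ of the piece. I would fix the scale on the outermost piece $\mathcal{A}_{\mathrm{out}}$ from the original Dirichlet data and then propagate inward: at each cut, the first Green identity (Proposition~\ref{pr:Green id}) equates the total normal-derivative flux of $g$ on the two sides of $L$, which after exponentiation forces the pair-flux lengths to match once the scale on the inner piece is chosen correspondingly. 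Verifying that this recursive scaling is consistent at every tangency point, and that the tangency labels provided by Definition~\ref{de:circwithlabel} pair up the conical-singularity candidates across the cut, is the technically delicate step and produces the cone singularities of $S_\Omega$ with the required angles.

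Finally, assembling the $f_i$ along these length-matched identifications yields the homeomorphism $f\colon \Omega\to S_\Omega$ of~(c); measure preservation on each quadrilateral and boundary preservation are inherited directly from the corresponding properties of each $f_i$, and the boundary lengths in~(a) are the pair-flux lengths of $E_1$ and of each $E_2^j$, hence are determined by the Dirichlet data. Since $S_\Omega$ is assembled from finitely many Euclidean cylinders glued along circles with finitely many cone singularities of angle in $\tfrac{\pi}{2}\mathbb{Z}$, it is a genus zero surface with $m$ boundary components, i.e.\ a ladder of singular pairs of pants in the sense of \cite{Her1}.
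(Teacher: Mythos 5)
Your overall plan coincides with the paper's: recursively cut $\Omega$ along maximal singular level curves (Proposition~\ref{pr:onethatenclose}, Theorem~\ref{th:notsimple}) until every piece is an annulus or a generalized singular annulus, uniformize each piece via Theorem~\ref{th:pair is orth}, Theorem~\ref{th:annulus}, Proposition~\ref{th:annulussing}, Corollary~\ref{co:to the quotient}, and the map $F$ of Equation~(\ref{eq:annulustocylinder}), and then reassemble the resulting (generalized) Euclidean cylinders along the cuts. Items (1)--(3) and the construction of each $f_i$ are handled exactly as in the paper.

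The gap is in your justification of the gluing step. You assert that the first Green identity ``after exponentiation forces the pair-flux lengths to match.'' This does not work. Green's identity only gives flux balance of $\partial g/\partial n$ across the cut curve $L$; but the pair-flux length of $L$ computed from either side is, by Definition~\ref{de:pairflux} and Corollary~\ref{co:lengthlevelg}, of the form $2\pi\exp\bigl(\tfrac{2\pi}{\mbox{\scriptsize\rm period}(g^{\ast}_{\mathrm{piece}})}\,g(L)\bigr)$, and the normalization constant $\mbox{\rm period}(g^{\ast}_{\mathrm{piece}})$ is computed separately for each piece from the \emph{modified} boundary value problem on that piece. These periods do not agree in general, and no exponentiation of the flux identity turns a flux equality into an equality of these lengths. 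There is simply no conservation law doing the matching for you. What the paper does instead --- and what your argument should do --- is elementary: compute the ratio $R_i$ of the two relevant pair-flux lengths (Equation~(\ref{eq:almostthere})) and apply the conformal dilation $z\mapsto R_i z$ to the inner cylinder before identifying boundary circles, and similarly rescale an entire assembled ladder by the factor $\tau_j$ of Equation~(\ref{eq:length in ladder}) before attaching it to the enclosing generalized cylinder. The consistency of the scheme then follows because the splitting tree gives each cut curve as the common boundary of exactly two pieces (one generalized singular annulus and one sub-ladder), so the rescalings can be applied one at a time, proceeding from the leaves of the tree to the root (equivalently, inward from $E_1$ as you propose). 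Once you replace the appeal to Green's identity by this explicit rescaling, the rest of your argument --- the assembly of $f$, measure and boundary preservation, and the identification of the cone angles at tangency vertices --- goes through as you describe.
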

\begin{proof}
The first part of the proof is based on a splitting scheme along a family of singular level curves of $g$ which will be proven to terminate after finitely many steps.  We will describe in detail the first two steps of the scheme, explain why it terminates, and leave the ``indices" bookkeeping required in the formal inductive step to the reader.  The outcome of the first part of the proof is a scheme describing a splitting of the top domain, $\Omega$, to simpler components, annuli and singular annuli. 

\smallskip

The complement of $L(\Omega)$, the maximal singular curve in $\Omega$, has at most $m$-connected components, all of which, due to Proposition~\ref{pr:onethatenclose}, have connectivity which is at most  $m-1$, or are annuli, or generalized singular annuli.  By the maximum principle, one of these components has all of its vertices with $g$-values that are greater than the $g$-value along $L(\Omega)$. In Subsection~\ref{pa:simple}, such a domain was denoted by
${\mathcal O}_{2}(L(\Omega))$ and was called an exterior domain. Its boundary consists of  $E_1$ and $L(\Omega)$. It follows from Proposition~\ref{pr:onethatenclose} and  Theorem~\ref{th:notsimple}  that it is a generalized singular annulus which will be denoted by ${\mathcal A}(E_1,L(\Omega))$.  

\smallskip

Let the full list of  components   of the complement of  $L(\Omega)$ in $\Omega$   be enumerated as 
\begin{equation}
\label{eq:list}
CC_1=\{CC_{1,1}(L(\Omega)),CC_{1,2}(L(\Omega)),\ldots, CC_{1,p}(L(\Omega))={\mathcal A}(E_1,L(\Omega)) \}.
\end{equation} 

\smallskip

By definition, for each $j=1, \ldots ,p-1$, the $g$-value on the boundary component 
\begin{equation}
\label{eq:procedure}
%\partial_j {\mathcal A}(E_1^1,L(\Omega_1)) =%
\partial_{1,j}= \partial CC_{1,j}(L(\Omega))\cap L(\Omega)
\end{equation}
is the constant which equals the $g$-value on $L(\Omega)$. The other components of $\partial CC_{1,j}(L(\Omega))$, $j=1, \ldots ,p-1$, are kept at $g$-values equal to $0$. Hence, we now impose a (discrete) Dirichlet boundary value problem with these values on each element in the list $CC_1\setminus {\mathcal A}(E_1,L(\Omega))$.
On ${\mathcal A}(E_1,L(\Omega))$, the induced Dirichlet boundary value problem is determined by the value of $g$ restricted to $E_1$ (which is equal to $k$), and the value of $g$ restricted to $L(\Omega)$. Note that imposing these boundary value problems in general will require introducing vertices of type I and of type II and changing conductance constants along new edges, as described in Subsection~\ref{pa:simple}. These modifications are done in such a way that the restriction of the original $g$ solves the new boundary value problems.

\smallskip

For $j=1,\ldots, p-1$, let $k_j$ denote the connectivity of $CC_{1,j}(L(\Omega))$.
We now repeat the procedure described in the first paragraph of the proof in each one of the connected components $CC_{1,j}(L(\Omega)),$ at most $k_j - 2$ times,  for $j=1,\ldots, p-1$, excluding those indices that correspond to annuli. 

\medskip

We will now describe the second step of the splitting scheme. For each $j\in\{1,\ldots, p-1\}$ whose corresponding component is not an annulus,  a maximal singular level curve 
\begin{equation}
\label{eq:singcurve}
L_j(CC_{1,j})=L(CC_{1,j}(L(\Omega)))
\end{equation}
with respect to the component  $CC_{1,j}(L(\Omega))$,   
is chosen. This is possible because at the end of the previous step, we imposed a Dirichlet boundary value problem on each one of these domains. Hence, the assertions of Proposition~\ref{pr:onethatenclose} and Theorem~\ref{th:notsimple} may be applied to these domains as well.

\smallskip

Therefore, a new list consisting of
connected components of the complement of $L_{j}(CC_{1,j})$ in $CC_{1,j}(L(\Omega))$,
of cardinality at most $m-1$,
 \begin{equation}
\label{eq:newlist}
CC_{1,j}=\{CC_{1,j,1}(L_j(CC_{1,j})),CC_{1,j,2}(L_j(CC_{1,j})),\ldots, CC_{1,j,v}(L_j(CC_{1,j}))\}, 
\end{equation}
$j$ as chosen above is generated.
We will let the last element in this list denote the exterior domain  to $L_j(CC_{1,j})$
in $CC_{1,j}$. It is, as in the first step of the scheme, a generalized singular annulus denoted by 
${\mathcal A}(\partial_{1,j},
L_j(CC_{1,j}))$. The other components have connectivity which is at most  $(m-2)$, or are annuli. Note that in this step the exterior domain from the first step in the scheme, $CC_{1,p}(L(\Omega))={\mathcal A}(E_1,L(\Omega))$, is left without any further splitting, since it is a generalized singular annulus. 

\smallskip

\begin{figure}[htbp]
\begin{center}
 \scalebox{.55}{ \input{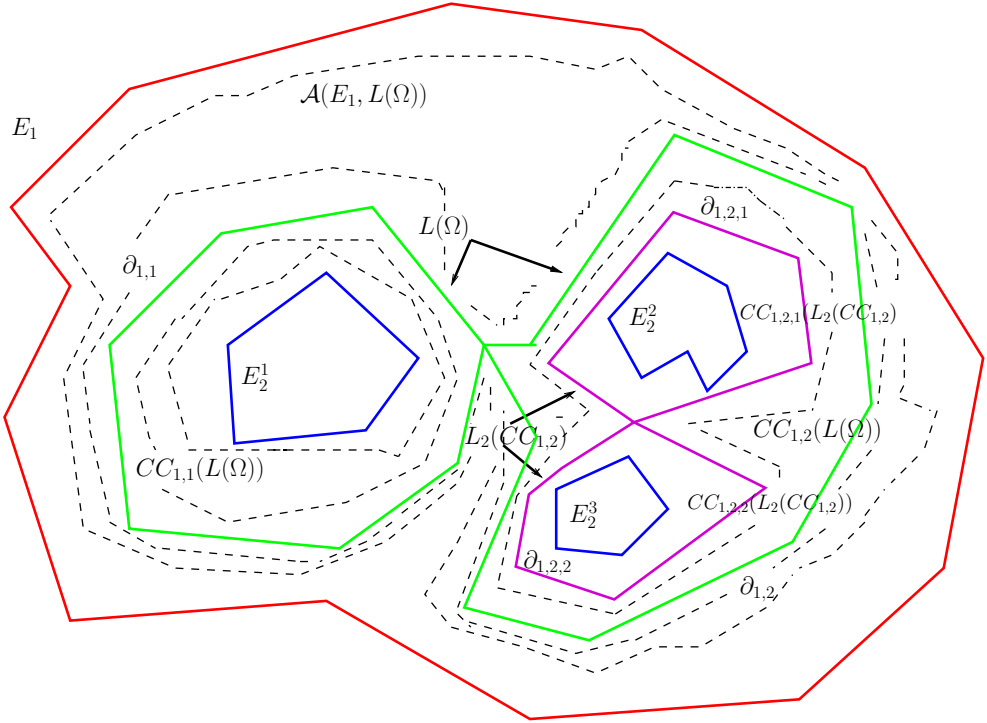}}
 \caption{An example of a splitting scheme.}
\label{figure:quad3}
\end{center}
\end{figure}

\smallskip
By definition, for each $j$ chosen as above, and each  $i=1,\ldots v$, the $g$-value on the boundary component 
\begin{equation}
\label{eq:procedure1}
%\partial_j {\mathcal A}(E_1^1,L(\Omega_1)) =%
\partial_{1,j,i}= \partial CC_{1,j,i}(L_j(CC_{1,j}))\cap L_j(CC_{1,j})
\end{equation}
is the constant which equals the $g$-value on $L_j(CC_{1,j})$. The other components of $\partial CC_{1,j,i}(L(\Omega))$ are kept at $g$-values equal to $0$. Hence, we now impose a (discrete) Dirichlet boundary value problem with the above values on each connected component in the list 
\begin{equation}
\label{eq:list1}
CC_{1,j}\setminus {\mathcal A}(\partial_{1,j},L_j(CC_{1,j})).
\end{equation}
On ${\mathcal A}(\partial_{1,j},
L_j(CC_{1,j}))$, the induced (discrete) Dirichlet boundary value problem is determined by the value of $g$ restricted to $\partial_{1,j}$, and the value of $g$ restricted to $L_j(CC_{1,j})$. Addition of vertices of type I and II, and modifications of conductance constants, will be applied in this step as in the previous one.

\medskip

It follows that in each step of the splitting scheme either a domain with fewer boundary components than the one that was split, or a singular annulus, or an annulus, is obtained. Hence, 
the connectivity level of each connected component after the split  is either the constant number two, the constant number three, or it decreases. Therefore, the splitting scheme will terminate once all the obtained components  have connectivity which equal to two or three, i.e., when the union of all the final generated lists is a list of lists, each containing only annuli and generalized singular annuli. 

\medskip

We now turn to the second part of the proof. Here, we will show that it is possible to reverse the splitting scheme, i.e., starting at the final lists generated in the splitting scheme up to the first one, $CC_1$, we will glue the pieces in a {\it geometric way}; that is, so that the lengths of glued boundary components are equal. It is in this step where the pair-flux length (Section~\ref{se:fgmetric}) will be used  (whenever the term ``length" appears).

\smallskip

By the structure of the lists obtained in the first part of the proof, it is sufficient to show how to 
\begin{enumerate}
\item
glue in a geometric way elements in a list, say  $CC_{1,j,\ldots,k_j}$,
that contains only annuli and a generalized singular annuli so as to form a ladder of singular pair of pants denoted by ${\mathcal S}_{1,j,\ldots,k_j}$, and

\item glue in a geometric way ${\mathcal S}_{1,j,\ldots,k_j}$ to the singular boundary component of
 the generalized singular annulus in the list from which  $CC_{1,j,\ldots,k_j}$ was formed.

\end{enumerate}

Note that, if all the elements in the list $CC_{1,j,\ldots,k_j}$ are annuli, we can apply case $(2)$, since we will map each annulus via 
Theorem~\ref{th:annulus} to a Euclidean cylinder (by first applying Theorem~\ref{th:annulus} and then Remark~\ref{re:to a cylinder}). 

\smallskip

In order to ease the notation, let us show steps $(1)$ and $(2)$ for the lists produced in the first part of the proof.

\smallskip

Fix a $j\in\{1,\ldots, p-1\}$, and for $i=1,\ldots, v-1$, we apply Theorem~\ref{th:annulus} to 

\begin{equation}
\label{eq:annend}
{\mathcal A}_i=CC_{1,j,i},
\end{equation} 
which by the assumption of step $(1)$ is an annulus. This yields a collection of concentric Euclidean annuli  $\{S_{1},\ldots,S_{v-1}\}$ where, in the notation of Theorem~\ref{th:annulus}, $S_i= S_{{\mathcal A}_i}$. Furthermore,
for each $i=1,\ldots, v-1$, we have
\begin{equation}
\label{eq:dimsi}
\{r_1^{i},r_2^{i}\}=  \{1, 2\pi\,\mbox{\rm Length}_{ (g,h_{i}) }(L( \partial_{1,j,i}   ))\}=\{1, 2\pi \exp\big( \frac{2\pi}{\mbox{\rm period}(g^{\ast}_i)}\,     g(\partial_{1,j,i})\big)\},
\end{equation}
where $g_i^{\ast}$ is the period (Definition~\ref{de:period}) 
of the conjugate function to $g$, the solution of the Dirichlet boundary value problem defined on $CC_{1,j,i}$.

\smallskip

 We now apply the map $F$ defined in Equation~(\ref{eq:annulustocylinder}) to obtain a corresponding sequence of Euclidean cylinders  $\{C_1,\ldots,C_{v-1}\}$. All of these have radii equal to the constant $1$, and their heights are given respectively by 
\begin{equation}
\label{eq:heights}
h_i=  \log{ (2\pi\,\mbox{\rm Length}_{(g,h_i)}(L( \partial_{1,j,i}   )))}=\log{ 2\pi}+  \frac{2\pi}{\mbox{\rm period}(g^{\ast}_i)}\,     g(\partial_{1,j,i}).
\end{equation}
     
\smallskip

 Recall that the last component in the list $CC_{1,j}$ is the generalized singular annulus 
${\mathcal A}(\partial_{1,j},L_j(CC_{1,j}))$. Let $h_{1,j}$ be the conjugate harmonic function to $g$, the solution of the Dirichlet boundary value problem induced on it (see the paragraph preceding Proposition~\ref{th:annulussing}). Then, following Remark~\ref{re:to a cylinder}, we now map it to a generalized Euclidean cylinder, $C_{{\mathcal A}(\partial_{1,j},L_j(CC_{1,j}))}$. 

\smallskip

Measuring lengths of the image of the boundary components of ${\mathcal A}(\partial_{1,j},L_j(CC_{1,j}))$ (using $g$ and $h_{1,j}$), which are in one to one correspondence with the singular boundary of this cylinder, yields the sequence of radii
\begin{equation}
\label{eq:almostthere}
 R_i= \frac{1}{2\pi}\frac{\mbox{\rm Length}_{g,h_j}(\partial_{1,j,i})}{ \mbox{\rm Length}_{g,h_j}(\partial_{1,j})} = \frac{1}{2\pi}\frac{1}{\mbox{\rm period}(g^{\ast}_{1,j})}    \frac{\exp\big(\frac{2 \pi}{\mbox{\rm period}(g^{\ast}_{i,j}) }    g( \partial_{1,j,i}   ))\big)}{\exp\big(\frac{2\pi}{\mbox{\rm period}(g^{\ast}_{1,j})} g(\partial_{1,j})\big)}     \int_{e\in\partial_{1,j,i}} |dh_{1,j}(e)|,
\end{equation}
for $i=1,\ldots,v-1$, where the expression on the righthand side is based on  Equation~(\ref{eq:length gradient metric1}), and Equation~(\ref{eq:levelg}). The sequence $R_i$, $i=1,\ldots, v-1$ comprises the sequence of lengths of the round circles in the singular boundary component of $C_{\mathcal A}(\partial_{1,j},L_j(CC_{1,j}))$.  
\smallskip

 Let 
\begin{equation}
\label{eq:conformalfactors}
f_i = f_{R_i}= R_i z
\end{equation}
be the conformal homeomorphism acting on the Euclidean cylinder $C_i=\pi\circ F\circ f(CC_{1,j,i})$, where $z$ is the standard complex parameter on $CC_{1,j,i}$. Hence, we may glue $f_i(C_i)$ along one of its boundary components to the  corresponding component in the singular boundary component of $C_{{\mathcal A}(\partial_{1,j},L_j(CC_{1,j}))}$, so that the length of the two boundaries are the same. 

\smallskip

This establishes step $(1)$. 

\smallskip
We will now show how to establish step $(2)$. 
 The completion of step $(1)$ yields  a ladder of singular pairs of pants which we will denote by ${\mathcal S}_{1,j}$. This ladder has $(m-1)$ components; one corresponds to $\partial_{1,j}$ and the others comprise one boundary component of each one of the cylinders $f_i\circ C_i$, $i=1,\ldots m-2$, after these cylinders are attached. Recall that  $\partial_{1,j}$ is the intersection of the generalized singular annulus ${\mathcal A}(E_1, L(\Omega))$,
 with the generalized singular annulus in the list $CC_{1,j}$, which is ${\mathcal A}(\partial_{1,j},L_j(CC_{1,j}))$. 

\smallskip
 
 Let $g$ be the solution of the induced Dirichlet boundary value problem on  ${\mathcal A}(E_1, L(\Omega))$, and let $h$ be the harmonic conjugate function to $g$ (see the paragraph preceding Proposition~\ref{th:annulussing}).
Let 
\begin{equation}
\label{eq:length in ladder}
\tau_j = \frac{l_j}{\mbox{\rm Length}_{g,h}( \partial_{i,j}   )   },
\end{equation}
where $l_j$ denotes the length of the boundary component that corresponds to $\partial_{1,j}$ in 
${\mathcal S}_{1,j}$, and $C_{{\mathcal A}(E_1, L(\Omega))}$ is the generalized Euclidean cylinder constructed for ${{\mathcal A}(E_1, L(\Omega))}$ (see Remark~\ref{re:to a cylinder}).  After applying a conformal expansion of magnitude $\tau_j$ to ${\mathcal S}_{1,j}$, it then may be glued along this boundary component to  $C_{{\mathcal A}(E_1, L(\Omega))}$ in such a way that the length of the corresponding circle in the round bouquet $\partial C_{{\mathcal A}(E_1, L(\Omega))}$ has the same length.  This establishes step $(2)$. 

\smallskip
By construction, it is clear that the pair-flux length of the boundary component of $S_{\Omega}$ that corresponds to $E_1$ is equal to
\begin{equation}
\label{eq:boundarylenght1}
2 \pi \exp{ (\frac{2 \pi}{ \mbox{\rm period} (g^{\ast})   } k)},
\end{equation}
where $h$ is the harmonic conjugate to $g$, the solution of the Dirichelt boundary value problem induced on ${\mathcal A}(E_1,L(\Omega))$ (recall from Definition~\ref{de:dbvp} that $k$ is the value of $g$ restricted to $E_1$).
The lengths of the remaining $(m-1)$ boundary components of $S_{\Omega}$, which correspond to the $m-1$ boundary components of $E_2$, are determined by the process described in the previous part of the proof. 

\smallskip

The length of a component in $S_{\Omega}$ which corresponds to $E_2^i$, $i\in \{1,\ldots,m-1\}$, measured with respect to the pair-flux metric,  is obtained by successively multiplying  a sequence of ratios of lengths. These ratios are uniquely determined as in Equation~(\ref{eq:almostthere}), and Equation~(\ref{eq:length in ladder}), and present the expansion factor needed in order to match the gluing of a (generalized) cylinder to the one which induced it in the splitting process. 

\smallskip

Cone angles are formed whenever more than two cylinders meet at a vertex; viewed in $\Omega$, this will occur whenever more than two circles in a generalized bouquet meet at a vertex. The computation of the cone angles is solely determined by $g$ and ${\mathcal T}$. This analysis first appeared in Theorem 0.4 in \cite{Her1}. Specifically, the cone angle $\phi(v)$ at a singular vertex $v$, which is the unique tangency point of $n+1$ Euclidean cylinders, satisfies
\begin{equation}
\label{eq:cone}
\phi(v)=2(n+1)\pi.
\end{equation}

 The proof of the theorem is thus complete, with $f$ defined to be the union of the individual maps constructed at each stage.

\end{proof}

\bigskip

\end{document}